\newtheorem{theorem}{Theorem}
\newtheorem{corollary}{Corollary}
\newtheorem{lemma}{Lemma}
\newtheorem{assumption}{Assumption}
\newtheorem{definition}{Definition}
\newtheorem{remark}{Remark}
\def\cL{\mathcal{L}}
\def\cX{\mathcal{X}}
\def\cY{\mathcal{Y}}
\def\cP{\mathcal{P}}
\def\tx{\widetilde{x}}
\def\ty{\widetilde{y}}
\def\tu{\widetilde{u}}
\def\tv{\widetilde{v}}
\def\ux{\underline{x}}
\def\uy{\underline{y}}
\def\tnabla{\widetilde{\nabla}}
\def\gap{\textrm{gap}}
\def\domain{\mathrm{dom}}
\def\uf{{\underline{f}}}
\def\uh{{\underline{h}}}
\def\hy{\widehat{y}}
\def\Ord{\mathcal{O}}
\def\prox{\mathrm{prox}}
\def\Id{\mathbf{I}}
\newcommand{\Ip}[2]{\left\langle#1, #2\right\rangle}
\begin{document}

\runningtitle{Lifted Primal-Dual Method for Bilinearly Coupled Smooth Minimax Optimization}

\twocolumn[

\aistatstitle{
Lifted Primal-Dual Method for \\ Bilinearly Coupled Smooth Minimax Optimization
}

\aistatsauthor{ Kiran Koshy Thekumparampil \And Niao He \And Sewoong Oh }

\aistatsaddress{ 
University of Illinois at Urbana-Champaign \\ \url{thekump2@illinois.edu}
\And 
ETH Z\"urich \\
\url{niao.he@inf.ethz.ch} 
\And 
University of Washington\\
\url{sewoong@cs.washington.edu}
} 
]

\begin{abstract}
We study the bilinearly coupled minimax problem: $\min_{x} \max_{y} f(x) + \Ip{y}{Ax} - h(y)$, where $f$ and $h$ are both strongly convex smooth functions and admit first-order gradient oracles. Surprisingly, no known first-order algorithms have hitherto achieved the lower complexity bound of $\Omega((\sqrt{\frac{L_x}{\mu_x}} + \frac{\|A\|}{\sqrt{\mu_x \mu_y}} + \sqrt{\frac{L_y}{\mu_y}}) \log(\frac1{\varepsilon}))$ for solving this problem up to an $\varepsilon$ primal-dual gap in the general parameter regime, where $L_x, L_y,\mu_x,\mu_y$ are the corresponding  smoothness and strongly convexity constants. 

We close this gap by devising the first \emph{optimal} algorithm, the \emph{Lifted Primal-Dual (LPD) method}. Our method lifts the objective into an extended form that allows both the smooth terms and the bilinear term to be handled optimally and seamlessly with the same primal-dual framework. Besides optimality, our method yields a desirably simple \emph{single-loop} algorithm that uses only one gradient oracle call per iteration. 
Moreover, when $f$ is just convex, the same algorithm applied to a smoothed objective achieves the nearly optimal iteration complexity.
We also
provide a direct single-loop algorithm, using the LPD method, that achieves the iteration complexity of $\Ord(\sqrt{\frac{L_x}{\varepsilon}} + \frac{\|A\|}{\sqrt{\mu_y \varepsilon}} + \sqrt{\frac{L_y}{\varepsilon}})$. 
Numerical experiments on quadratic minimax problems and policy evaluation problems further demonstrate the fast convergence of our algorithm in practice. 
\end{abstract}

\section{Introduction}
\label{sec:intro}

Smooth minimax optimization has gained renewed interest driven by a wide spectrum of applications in machine learning, especially those arising in adversarial training, generative adversarial networks, and reinforcement learning. 
A plethora of first-order algorithms have been developed in the classical  and recent literature, ranging from convex to nonconvex settings, from deterministic to stochastic oracles, from single-loop to multiple-loop schemes.  However, our theoretical understanding of the iteration complexity of minimax optimization  is far from complete even in the canonical  \emph{strongly-convex-strongly-concave (SC-SC)} setting. In particular, the optimal dependence on the condition numbers of different blocks of variables has not been fully characterized. 

Consider the smooth convex-concave minimax problem (a.k.a. saddle point problem): 
\begin{equation}\label{eq:general_minimax}
\min_{x}\max_{y}\; \phi(x,y),
\end{equation}
where $\phi(x,y)$ is $\mu_x$-strongly convex in $x$ and $\mu_y$-strongly concave in $y$. Let $L_x, L_y, L_{xy}$ be the corresponding gradient Lipshitz constants with respect to different blocks of variables. To find an $\varepsilon$-approximate saddle point, \cite{zhang2019lower} recently showed that any first-order algorithm with the linear span assumption requires at least  
\begin{equation}\label{eq:general_lowerbound}
\Omega\left(\Big(\sqrt{\frac{L_x}{\mu_x} + \frac{L_{xy}^2}{\mu_x\mu_y}+\frac{L_y}{\mu_y}}\Big) \log\Big(\frac{1}{\varepsilon}\Big)\right)
\end{equation}
calls to a gradient oracle for $\phi(x,y)$. Notably, the lower iteration complexity bound applies to even the class of bilinearly coupled quadratic minimax problems, which was used to construct the hard instance. 

In the special parameter regime when $L_x=L_y=L_{xy}:=L$ and $\mu_x=\mu_y:=\mu$, this lower bound is matched by several popular algorithms including the Mirror Prox algorithm \cite{nemirovski2004prox,mokhtari2020unified}, the extra-gradient methods \cite{korpelevich1976extragradient,mokhtari2020unified,gidel2018variational} and the accelerated dual extrapolation \cite{nesterov2018lectures}, with iteration complexity of $O((L/\mu)\log(1/\varepsilon))$. %

However, in the general parameter regime, despite several recent attempts~\cite{cohen2021relative,lin2020near,wang2020improved,zhang2021complexity}, no known algorithms have yet exactly matched the lower bound.  
For instance, the algorithm in \cite{lin2020near} achieves an upper complexity bound of $\tilde O((\cL/\sqrt{\mu_x\mu_y})\log^3(1/\varepsilon))$. One of the best-known results is obtained in  \cite{wang2020improved}, that gives the complexity of 
 $\tilde O(\sqrt{(L_x/\mu_x)+(\cL\,L_{xy}/\mu_x\mu_y)+(L_y/\mu_y)}\log(1/\varepsilon))$,  
where $\tilde O$ hides a polylogarithmic factor in problem parameters and $\cL=\max\{L_x,L_{xy},L_y\}$.
These advances all rely on carefully designed multi-loop algorithms.

We  close this gap for a class of SC-SC minimax problems with bilinear coupling (Bi-SC-SC). Specifically, we consider problems of the general form: 
\begin{equation}\label{eq:bilinear-minimax}
\min_{x \in \cX} \max_{y \in \cY} \; [\phi(x, y) = f(x) + \Ip{y}{Ax} - h(y)],
\end{equation}
where $f(x)$ is $L_x$-smooth and $\mu_x$-strongly convex, $h(y)$ is $L_y$-smooth and $\mu_y$-strongly convex, $\cX$ and $\cY$ are closed convex sets. We assume access to first-order gradient oracles of $f$ and $g$ as well as the matrix $A$.  Note that the lower bound in~\eqref{eq:general_lowerbound} also holds for this class of problems. This class of problems by itself has found numerous applications in machine learning, as detailed in Section~\ref{sec:applications}.

The main challenge in designing an optimal algorithm is that the objective consists of two different classes of functions: smooth convex terms $f$ and $h$, and bilinear coupling $\Ip{y}{Ax}$. These two classes   are traditionally optimized using conceptually different algorithms. On one hand,  accelerated gradient methods (AGD)~\cite{nesterov2018lectures} are optimal at solving smooth strongly convex problems like $\min_x f(x)$ or $\min_y h(y)$.  On the other hand, bilinear problems of the form, $\min_x \max_y \Ip{y}{Ax}$ or the like (with additional proximal-friendly terms), are optimally solved using a seemingly different class of algorithms such as primal-dual methods; see e.g., ~\cite{chambolle2016ergodic,chen1997convergence,bauschke2011convex,chen2014optimal,he2016accelerated}, just to name a few. Such a conceptual difference makes it hard to design an algorithm that achieves optimal dependence on the smoothness and strong convexity parameters of each of the three terms in the objective.

\begin{table*}[t]
	\centering
	\renewcommand{\arraystretch}{1.4}
	\caption{First-order gradient oracle complexity %
	comparisons for bilinearly-coupled smooth minimax problems. %
	We define  $\cL := \max(L_x,\|A\|, L_y)$.  
	$^\clubsuit$Smoothing  applies  a Bi-SC-SC method to the smoothed problem: $\min_x \max_y \phi(x,y)+\Ord(\varepsilon)\|x\|^2$ which makes the originally Bi-C-SC problem $\mu_x=\epsilon$-strongly convex. $^\diamondsuit$Each term of this tight lower bound is implicitly implied by existing lower-bounds for special cases of the Bi-C-SC problem.
	}
	\begin{tabular}{ l c l }
	\hline 
	\textbf{Method} & \textbf{\# Loops} & \textbf{Complexity to reach $\varepsilon$ primal-dual gap} 
	\\ \hline
	\textit{Strongly-Convex--Strongly-Concave (Bi-SC-SC)} \\
	\hline	
	\begin{tabular}{l}
	\rule{0pt}{15pt}
		MP/EG, OGDA 
        {\small\cite{mokhtari2020unified}}, DE {\small \cite{nesterov2006solving}} \\
        \rule{0pt}{15pt}
		MP Bal. {\small(Appendix \ref{sec:expt_details})}, MP RL { {\small\cite{cohen2021relative}}}\\
		\rule{0pt}{17pt}
    	Proximal Best Response  { \small\cite{wang2020improved}}\\
    	\rule{0pt}{15pt}
		DIPPA { \small\cite{xie2021dippa}}\\
		\rule{0pt}{17pt}
		{\bf Lifted PD} { \small (Theorem~\ref{thm:smooth_scsc_primal_dual_informal})}\\
		\rule{0pt}{17pt}
		Lower bound \cite{zhang2019lower}\\
	\end{tabular}
	&
	\begin{tabular}{l}
	\rule{0pt}{15pt}	Single \\
	\rule{0pt}{15pt}	Single \\
    \rule{0pt}{17pt}	Multi \\
	\rule{0pt}{15pt}	Multi \\
	\rule{0pt}{17pt}	Single \\
	\rule{0pt}{17pt}	N/A \\
	\end{tabular}
	&	
	\begin{tabular}{l}
		\rule{0pt}{15pt} 
		$\Ord\big({\frac{L_x + \|A\| + L_y}{\min({\mu_x, \mu_y})}}\big) \log\big(\frac1{\varepsilon}\big)$ 
		\\
				\rule{0pt}{15pt} 
		$\Ord\big({\frac{L_x}{\mu_x}} + \frac{\|A\|}{\sqrt{\mu_x \mu_y}} + {\frac{L_y}{\mu_y}} \big)  \log\big(\frac1{\varepsilon}\big)$ \\
		\rule{0pt}{17pt}
		$\widetilde{\Ord}\big(\sqrt{\frac{L_x}{\mu_x}} + \sqrt{\frac{\|A\| \cL}{\mu_x \mu_y}} + \sqrt{\frac{L_y}{\mu_y}}\big)  \log\big(\frac1{\varepsilon}\big)$ \\
		\rule{0pt}{15pt}
		$\widetilde{\Ord}\big(\,(\frac{L_x^2 L_y}{\mu_x^2 \mu_y})^{\frac14} + \frac{\|A\|}{\sqrt{\mu_x \mu_y}} + (\frac{L_y^2 L_x}{\mu_y^2 \mu_x})^{\frac14}\,\big)  \log\big(\frac1{\varepsilon}\big)$ \\
		\rule{0pt}{17pt}
		$\Ord\big(\sqrt{\frac{L_x}{\mu_x}} + \frac{\|A\|}{\sqrt{\mu_x \mu_y}} + \sqrt{\frac{L_y}{\mu_y}}\big)  \log\big(\frac1{\varepsilon}\big)$ \\
		\rule{0pt}{17pt}
		$\Omega\big(\sqrt{\frac{L_x}{\mu_x}} + \frac{\|A\|}{\sqrt{\mu_x \mu_y}} + \sqrt{\frac{L_y}{\mu_y}}\big)  \log\big(\frac1{\varepsilon}\big)$ \\
	\end{tabular}
	\\\hline
	\textit{Convex--Strongly-Concave (Bi-C-SC)} \\
	\hline	
	\begin{tabular}{l} 
	\rule{0pt}{15pt}
		MP/EG, OGDA 
        {\small\cite{mokhtari2020unified}}, DE {\small \cite{nesterov2007dual}} \\
		\rule{0pt}{17pt}
		PDHG-type   
		{\small\cite{zhao2019optimal}} \\	
		\rule{0pt}{17pt}
		DIAG  {\small\cite{thekumparampil2019efficient}} \\
		\rule{0pt}{17pt}
		{\bf Lifted PD} { \small (Theorem~\ref{thm:smooth_csc_primal_dual_informal})}\\
		\rule{0pt}{17pt}
		{\bf Lifted PD + Smoothing}$^\clubsuit$ { \small (Remark~\ref{rem:smooth_scsc_primal_dual_plus_smoothing})}\\
		\rule{0pt}{17pt}
		Lower bound$^\diamondsuit$ \cite{ouyang2021lower,nesterov2018lectures} \\
	\end{tabular}
	& 
	\begin{tabular}{l}
		\rule{0pt}{15pt} Single \\
		\rule{0pt}{17pt} Multi \\
		\rule{0pt}{17pt} Multi \\
		\rule{0pt}{17pt} Single \\
		\rule{0pt}{17pt} Single \\
		\rule{0pt}{17pt} N/A \\
	\end{tabular}
	&
	\begin{tabular}{l}
	\rule{0pt}{15pt}
		$\Ord\big(\frac{L_x + \|A\| + L_y }{\varepsilon}\big)$
		\\
		\rule{0pt}{17pt}
		$\Ord \big({\frac{L_x}{\varepsilon}} + \frac{\|A\|}{\sqrt{\mu_y \varepsilon}} + \sqrt{\frac{L_y}{\mu_y}}\log(\frac{1}{\varepsilon})    \big)$\\		
		\rule{0pt}{17pt}
		$\Ord\big( \sqrt{\frac{L_y}{\mu_y}} \big(\sqrt{\frac{L_x}{\varepsilon}} + \frac{\|A\|}{\sqrt{\mu_y \varepsilon}} \big) \big) \log^2\big(\frac1{\varepsilon}\big)$
		\\		
		\rule{0pt}{17pt}
		$\Ord\big(\sqrt{\frac{L_x}{\varepsilon}} + \frac{\|A\|}{\sqrt{\mu_y \varepsilon}} + \sqrt{\frac{L_y}{\varepsilon}}\big)$
		\\
		\rule{0pt}{17pt}
		$\Ord\big(\sqrt{\frac{L_x}{\varepsilon}} + \frac{\|A\|}{\sqrt{\mu_y \varepsilon}} + \sqrt{\frac{L_y}{\mu_y}}\big)  \log\big(\frac1{\varepsilon}\big)$ 
		\\	
		\rule{0pt}{17pt}
		$\Omega \big(\sqrt{\frac{L_x}{\varepsilon}} + \frac{\|A\|}{\sqrt{\mu_y \varepsilon}} + \sqrt{\frac{L_y}{\mu_y}}\log(\frac{1}{\varepsilon})    \big)$\\
	\end{tabular}
	\\\hline
\end{tabular}
\label{tab:complexity} 
\end{table*}

\subsection{Our Contributions}
We introduce a new algorithm that reconciles these different components by lifting the objective to an extended saddle point formulation.  Our key idea hinges on the recent interpretation~\cite{lan2018optimal} of accelerated gradient descent for convex minimization as a variant of primal-dual method for an equivalent minimax problem. Based on the reformulation, we can handle both the smooth terms and bilinear coupling term under the same umbrella of primal-dual method. 
We make the following key contributions.
\begin{itemize}
    \item We provide the first optimal algorithm for the class of bilinearly coupled SC-SC minimax problems, called the \emph{lifted primal-dual (LPD) method}, achieving the iteration complexity of 
    $\Ord\big((\sqrt{L_x/\mu_x}+\|A\|/\sqrt{\mu_x\mu_y}+\sqrt{L_y/\mu_y})\log(1/\varepsilon)\big)$ (Theorem \ref{thm:smooth_scsc_primal_dual_informal}),
     tightly matching the lower bound. The LPD method is also \emph{single-loop}, using only one gradient oracle call per iteration, which is more desirable in practice. 

    \item  For bilinearly coupled \textit{convex-strongly-concave} (Bi-C-SC) minimax problems where $f$ is only convex, namely, $\mu_x = 0$, 
    we can apply the LPD method to a {\em smoothed} objective $\phi(x,y) + \lambda \varepsilon \|x\|^2$, which transforms the objective into a SC-SC one \cite{nesterov2005smooth}. The LPD method is the first  to achieve optimal complexity up to logarithmic factors in this setting (Remark \ref{rem:smooth_scsc_primal_dual_plus_smoothing}) as shown in Table~\ref{tab:complexity}. 
    However, smoothing might not be desirable in practice (see Section~\ref{sec:analysis}).  %
    To this end, 
    we design a direct algorithm by selecting appropriate stepsizes in LPD.  This  achieves an iteration complexity  %
    that is suboptimal but the best among those not using smoothing (Theorem~\ref{thm:smooth_csc_primal_dual_informal}).  %
    
\end{itemize}

Detailed comparisons with existing algorithms are presented in Table~\ref{tab:complexity}.

\subsection{Related Work}
\label{sec:related}
Below we highlight key distinctions of our work to the most closely related literature. Our list of related work is by no means comprehensive. 
There exists optimal algorithm for the case when both $f$ and $h$ are just convex ($\mu_x$$\,=\,$$\mu_y$$\,=\,$$0$) \cite{chen2014optimal,chen2017accelerated}. However, when either of $f$ or $h$ is strongly convex it is not readily clear how to optimally solve the problem.

\paragraph{Bilinear coupling with simple terms.} Existing work on bilinearly coupled minimax problems primarily focuses on the case when $f$ and/or $h$ are proximal-friendly, i.e., it is easy to compute the proximal operator.  If both $f$ and $h$ are proximal-friendly and strongly convex, then the primal-dual method~\cite{chambolle2016ergodic} and accelerated forward-backward algorithm~\cite{palaniappan2016stochastic} already achieve  the optimal rate $\Ord( (\|A\| /\sqrt{\mu_x \mu_y})\log(1/\varepsilon))$ \cite{xie2021dippa}. 
If only $h$ is proximal-friendly and $f$ is smooth, but both are strongly convex, \cite{chambolle2016ergodic} provides a linearly convergent but sub-optimal algorithm.
Our work differs from this line of results as we don't require computing the proximal operators of neither $f$ nor $h$, but instead only their gradients. 
One exception is DIPPA, a complex multi-loop algorithm introduced in 
\cite{xie2021dippa}, which achieves  
$\Ord( (((L_x L_y/\mu_x \mu_y)(L_x/\mu_x + L_y/\mu_y))^{1/4} + \|A\|/\sqrt{\mu_x\mu_y})\log(1/\varepsilon))$ complexity under the same setting (Bi-SC-SC) as the one we study. Additionally, for the special case of quadratic Bi-SC-SC problem, \cite{wang2020improved} provides a recursive multi-loop algorithm which achieves a sub-optimal iteration complexity of $\Ord( (\sqrt{L_x/\mu_x} + \|A\|/\sqrt{\mu_x \mu_y} + \sqrt{L_y/\mu_y} ) \, (\cL/\mu_x \mu_y)^{o(1)} \, \log(1/\varepsilon ))$, where $\cL=\max(L_x, L_{xy}=\|A\|, L_y)$.

\paragraph{Beyond bilinear coupling.} Beyond bilinear coupling, most existing work either treat the  objective as a whole or consider special couplings. 
In the SC-SC setting with a general coupling \eqref{eq:general_minimax}, there are many algorithms which achieve linear convergence; one of the first such algorithm is the Extragradient (EG) method  \cite{korpelevich1976extragradient,tseng1995linear}.  Here, Gradient  Descent Ascent (GDA) achieves an iteration complexity of $\Ord(\kappa_{\max}^2 \log(1/\varepsilon))$ (see e.g., Chapter 12 in \cite{facchinei2007finite}), while Mirror-Prox (MP/EG) \cite{nemirovski2004prox}, Dual Extrapolation (DE) \cite{nesterov2007dual,nesterov2006solving}, and Optimistic Gradient Descent Ascent (OGDA) \cite{daskalakis2017training,gidel2018variational,mokhtari2020unified} achieve an iteration complexity of $\Ord(\kappa_{\max} \log(1/\varepsilon))$, where $\kappa_{\max} = \cL/\min(\mu_x, \mu_y)$. Further, the above complexities can be improved (replacing $\kappa_{\max}$ with $L_x/\mu_x + L_{xy}/\sqrt{\mu_x \mu_y} + L_y/\mu_y$) with proper balancing of distance functions  (see Appendix \ref{sec:expt_details}). This improved complexity can also be attained by a modified MP via relative Lipschitzness (we refer to as MP RL)\cite{cohen2021relative}. Two multi-loop algorithms: Minimax-APPA \cite{lin2020near} and Catalyst-type method \cite{alkousa2020accelerated}, which are based on accelerated minimization methods, achieve the iteration complexities of $\widetilde{\Ord}( (\cL/\sqrt{\mu_x \mu_y}) \log^3(1/\varepsilon))$ and $\widetilde{\Ord}(\sqrt{{L_y}/{\mu_y}}(\sqrt{L_x/\mu_x} + L_{xy}/\sqrt{\mu_x\mu_y}) \log^2(1/\varepsilon))$, respectively.
The current state-of-the-art complexity for general coupling is achieved by a multi-loop algorithm \cite{wang2020improved} (see Table \ref{tab:complexity}), but there is a gap to the known lower bound of \eqref{eq:general_lowerbound} as discussed in detail in Section~\ref{sec:intro}.

\paragraph{Convex-Strongly-Concave minimax problems.} 
As an example of special couplings, if the coupling is linear only in $x$ but nonlinear in $y$, and $f$ is proximal-friendly and convex and $h$ is smooth and strongly convex,  \cite{juditsky2011first,hamedani2021primal} achieve $\Ord((\|A\|/\sqrt{\mu_y \varepsilon} + (L_y/\mu_y)) \log(1/\varepsilon))$ and $\Ord((\|A\|+ L_y )/\sqrt{\mu_y \varepsilon})$ complexities, respectively, 
under our setting. 
In the same setting as these works, \cite{zhao2019optimal} provides a PDHG-type algorithm which works even when the coupling is non-linear instead of our bilinear one $\Ip{y}{Ax}$. This leads to a sub-optimal (in $\varepsilon$) complexity of $\Ord({L_x/\varepsilon} + \|A\|/\sqrt{\mu_y \varepsilon} + \sqrt{L_y/\mu_y} \log(1/\varepsilon))$ for our Bi-C-SC setting.
For minimax problems that are not necessarily SC-SC, recent work~\cite{du2019linear,yang2020global,azizian2020tight} show that linear convergence can still be achieved under additional assumptions. 
A recent work~\cite{thekumparampil2019efficient} discussed general convex-concave minimax problems with one-sided strong convexity (i.e., C-SC setting) and obtained a $\Ord(1/\sqrt{\varepsilon})$ complexity (see Table \ref{tab:complexity}). 
In principle, any of the known algorithms \cite{mokhtari2020unified,lin2020near,yang2020catalyst,wang2020improved,xie2021dippa} for the Bi-SC-SC setting \eqref{eq:bilinear-minimax} can be applied to solve Bi-C-SC problem after a smoothing transformation \cite{nesterov2005smooth}. However, due to their sub-optimality in the original Bi-SC-SC case itself, their complexities for C-SC case are sub-optimal as well.
We omit discussions of other minimax optimization settings
as they are less relevant.

\subsection{Notations}
We use $\Ip{x}{y}$ to denote the inner product between vectors $x$ and $y$, and $\|x\|$ to denote Euclidean norm of $x$. For a convex set $\cX$, $\cP_\cX(\cdot)$ denotes its projection operator. We use the standard big-O $\Ord$ and $\Omega$ notations. 
\textit{Iteration complexity or (gradient) complexity} of an algorithm is the number of iterations or gradients used by it find an \textit{$\varepsilon$-approximate saddle point} $(\hat{x}, \hat{y})$, which means that its primal-dual gap $\max_y \phi(\hat{x},y) - \min_x \phi(x,\hat{y}) \leq \varepsilon$.
Standard definitions of $L$-smoothness, $\mu$-strong convexity, Fenchel/convex conjugate, Bregman divergence and its distance generating function, and proximal operators are given in Appendix \ref{sec:prelim_details}.

\section{Problem Setting and Applications}
\label{sec:applications}

We are mainly interested in the \emph{bilinearly coupled strongly-convex-strongly-concave (Bi-SC-SC) minimax problem} of the form (\ref{eq:bilinear-minimax}). 
Throughout, we make the following assumption. 
\begin{assumption}\label{assume:ldp_main_assumptions}
$f$ is $L_x$-smooth and $\mu_x$-strongly convex, and $h$ is $L_y$-smooth and $\mu_y$-strongly convex on the entire Euclidean space. 
\end{assumption}
In addition, we assume that sets $\cX$ and $\cY$ are closed convex and the projection onto these sets is easily computable. Functions $f$ and $h$ have well defined gradient on $\cX$ and $\cY$ and they can be accessed through gradient oracles. 
Two distinctions that differ from most existing work are ($i$) no requirement on computing proximal operator of either $f$ or $h$, and ($ii$) the linear coupling term. 

This type of problems find numerous applications in machine learning. Below we  list only a few. 
\subsection{Quadratic Minimax Problems}

Quadratic minimax problems are fundamental problems which arise in numerical analyses \cite{bai2003hermitian,benzi2005numerical,bai2009optimal,wang2020improved}, optimal control problems \cite{rockafellar1987linear,liu2015projection}, constrained matrix games~\cite{xie2021dippa}. They also appear naturally  when solving subspace proximal sub-problems of Sequential Subspace Optimization for quadratic saddle-point problems \cite{choukroun2020primal}, and when solving sub-problems of minimax (cubic regularized) Newton method \cite{schafer2020competitive,zhang2020newton,huang2020cubic}. 
Here $f(x) = x^T B x$ and $h(y) = y^\top C y$  correspond  to positive definite matrices $B\succ 0$ and $C\succ0$. Thus the minimax objective is quadratic in $x$ and $y$:
\begin{align}\label{eq:quadratic_problem}
\phi(x,y) = x^\top B x + y^\top A x  - y^T C y.
\end{align}
Despite their simplicity, quadratic minimax problems are not trivial to solve \cite{zhang2021don}. Further, 
even nonconvex-nonconcave minimax problems can be Bi-SC-SC near a strict local saddle point   \cite{azizian2020accelerating}. 

\subsection{Robust Least Squares} 
Consider the robust least squares problem \cite{el1997robust,yang2020global} with a coefficient matrix $A$ and noisy vector $y$, where $y$ is corrupted by a deterministic perturbation $\delta$ of a bounded norm $\rho$: 
\begin{align*}
\min_x \max_{\delta: \|\delta\| \leq \rho} \|Ax-y\|^2 \,, \text{ where } \delta = y - y_0.
\end{align*}
The corresponding penalized version of the objective is a Bi-SC-SC minimax problem:
\begin{align*}
\min_{x} \max_{y} \phi(x,y) := \|Ax-y\|^2 - \lambda \|y-y_0\|^2\;.
\end{align*}
Selecting $\lambda>1$, we get a Bi-SC-SC problem.

\subsection{Policy Evaluation}

Bi-SC-SC arise in policy evaluation problem in reinforcement learning \cite{du2017stochastic,du2019linear} when finding minimum the mean squared projected Bellman error (MSPBE). Empirical estimator of minimum MSPBE has the form:
\begin{align}
\arg\min_\theta \frac12 \|A\theta - b\|^2_{C^{-1}} + \frac{\rho}2 \|\theta\|^2, 
\end{align}
where $A$, $b$, $C$ are defined as follows. Suppose we have a trace of $n$ tuples of current-state $s_t$, action $a_t$, next-state $s_{t+1}$, and reward $r_t$ under some policy $\pi$ on some MDP. Then we define $b = 1/n \sum_{t}^n r_t \phi_t$
\begin{align*}
A = \frac1n \sum_{t}^n \phi_t(\phi_t - \gamma \phi_{t+1})^\top \,,\, \text{ and } C = \frac1n \sum_{t}^n \phi_t \phi_t^\top
\end{align*}
where $\phi_t$ is the feature of state $s_t$, $\gamma$ is the discount factor. In practice, inverting $C$ can be computationally costly. Therefore, one may resort to solving the following minimax reformulation, eliminating the need for matrix inversion.
\begin{align}
\min_\theta \max_w  \frac{\rho}2 \|\theta\|^2 - w^\top A \theta - (\frac12 \|w\|^2_C - w^\top b) 
\label{eq:policy_eval_problem}
\end{align}
This is a Bi-SC-SC problem if $C$ is positive definite.

Note that, all these problems becomes a Bi-Convex--Strongly-Concave (Bi-C-SC) or Bi-Strongly-Convex--Concave (Bi-SC-C) problem, if the Hessian of convex quadratic of the primal or dual variables, respectively, becomes positive semi-definite.

\section{Building Blocks for our Method}
\label{sec:building_blocks}

We present known results that serve as the intuition behind the design of Algo.~\ref{algo:lpd}.
We first revisit the primal-dual method~\cite{chambolle2016ergodic},  which is originally designed to solve bilinearly coupled minimax problems with simple terms whose proximal operators are easy to compute.  We also discuss how it can be used to optimize smooth convex objectives with accelerated convergence. 

\subsection{Primal-Dual method \texorpdfstring{\cite{chambolle2016ergodic}}{[CP16]}} %
\label{sec:31}

Consider the bilinearly coupled minimax problem: 
\begin{align}\label{eq:scsc_prox_problem}
\min_x \max_y\;  F(x) + \Ip{y}{Ax} - H(y)
\end{align} 
with a unique solution $z^*=(x^*,y^*)$. Additionally, let $r$ and $s$ be $1$-strongly convex distance generating functions (d.g.f.) that induce Bregman divergences $V^{r}_{x_0}(x)$ and $V^{s}_{y_0}(y)$. Then, we assume that $F$ and $H$ are relatively $\mu_x$- and $\mu_y$-strongly convex with respect to $V^{r}_{x_0}(x)$ and $V^{s}_{y_0}(y)$, respectively. We also assume the access to their Bregman proximal operators, 
with respect to the corresponding divergences. 

The PD method can be viewed as an approximation of proximal point method (PPM) \cite{rockafellar1976monotone}. 
We emphasize this connection as the analyses of our main results closely follow that of PPM (Lemma \ref{lem:ppm_rule}). Readers who are familiar with this connection may skip to after Lemma \ref{lem:ppm_rule}.  The PPM updating rule is as follows:
\begin{equation*}
\begin{aligned}
&(x_{k+1},y_{k+1}) \,=\, \arg\min_{x} \, \arg\max_{y} \\
&\left\{\frac{1}{\eta_x}V^{r}_{x_k}(x) + F(x) + 
\Ip{y}{A x} - H(x) -  \frac{1}{\eta_y}V^{s}_{y_k}(y)\right\}.
\end{aligned}
\end{equation*}
This is equivalent to the implicit update rule
\begin{equation}
\left\{
\begin{aligned}
x_{k+1} &= \arg\min_{x} \Ip{A^\top y_{k+1}}{x} + \frac{1}{\eta_x}V^{r}_{x_k}(x) + F(x) \;\;\;\;\;\;\;\;\; 
\nonumber 
\\
y_{k+1} &= \arg\min_{y} -\Ip{A x_{k+1}}{y} + \frac{1}{\eta_y}V^{s}_{y_k}(y) + H(y).
\end{aligned}
\right.
\label{eq:ppm_rule}
\end{equation}
This is a conceptual rule and not an implementable one because finding $x_{k+1}$ requires the gradient at $y_{k+1}$ and vice versa. It is easy to prove that iterates of PPM linearly converges to the solution of \eqref{eq:scsc_prox_problem}. We provide a proof in Appendix \ref{sec:ppm_proof} for completeness.
\begin{lemma}\label{lem:ppm_rule}
	The iterates of the PPM  for the problem \eqref{eq:scsc_prox_problem} satisfy $(\|x^*-x_{K}\|^2/\eta_x + \|y^*-y_{K}\|^2/\eta_y) \leq 2 \exp(-K/(1+\kappa)) (V^{r}_{x_0}(x^*)/\eta_x + V^{s}_{y_0}(y^*)/\eta_y)$ for all $K \geq 0$, where $\kappa = 1/\min(\mu_x \eta_x, \mu_y \eta_y)$.
\end{lemma}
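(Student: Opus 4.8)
First I would write down the first-order optimality conditions for the two implicit updates in \eqref{eq:ppm_rule}. For the $x$-update, optimality gives $A^\top y_{k+1} + \frac{1}{\eta_x}(\nabla r(x_{k+1}) - \nabla r(x_k)) + \nabla F(x_{k+1}) = 0$ (assuming no constraint, or the appropriate variational inequality with $\mathcal{X}$), and symmetrically for $y$. The key is to test these conditions against the directions $x_{k+1} - x^*$ and $y_{k+1} - y^*$ respectively, and add. The bilinear cross terms $\Ip{A^\top y_{k+1}}{x_{k+1}-x^*}$ and $-\Ip{Ax_{k+1}}{y_{k+1}-y^*}$ are designed to partially cancel: using that $(x^*,y^*)$ is a saddle point (so $A^\top y^* + \nabla F(x^*) = 0$ and $Ax^* = \nabla H(y^*)$), the leftover bilinear piece is $\Ip{A(x_{k+1}-x^*)}{y_{k+1}-y^*}$ with opposite signs from the two equations, hence vanishes. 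This is the standard cancellation that makes PD/PPM work without any dependence on $\|A\|$ in the contraction rate.

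Next I would invoke the three-point identity for Bregman divergences, $\Ip{\nabla r(x_{k+1}) - \nabla r(x_k)}{x_{k+1} - x^*} = V^r_{x_k}(x^*) - V^r_{x_{k+1}}(x^*) - V^r_{x_k}(x_{k+1})$, and the analogue for $s$. Combined with relative strong convexity of $F$ and $H$ — i.e. $\Ip{\nabla F(x_{k+1}) - \nabla F(x^*)}{x_{k+1}-x^*} \ge \mu_x V^r_{x_{k+1}}(x^*)$ and similarly for $H$ — the testing inequality collapses to something of the form
$$
\frac{1+\mu_x\eta_x}{\eta_x} V^r_{x_{k+1}}(x^*) + \frac{1+\mu_y\eta_y}{\eta_y} V^s_{y_{k+1}}(y^*) \;\le\; \frac{1}{\eta_x} V^r_{x_k}(x^*) + \frac{1}{\eta_y} V^s_{y_k}(y^*),
$$
after dropping the nonnegative $V^r_{x_k}(x_{k+1})$, $V^s_{y_k}(y_{k+1})$ terms. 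Defining the Lyapunov function $\Phi_k := \frac{1}{\eta_x}V^r_{x_k}(x^*) + \frac{1}{\eta_y}V^s_{y_k}(y^*)$, this reads $\Phi_{k+1} \le \big(1 + \tfrac{1}{\kappa}\big)^{-1}\Phi_k$ with $\kappa = 1/\min(\mu_x\eta_x,\mu_y\eta_y)$, since the contraction on each block is at least $(1+1/\kappa)^{-1}$.

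Finally I would iterate: $\Phi_K \le (1+1/\kappa)^{-K}\Phi_0 \le \exp(-K/(1+\kappa))\,\Phi_0$ using $(1+t)^{-K}\le e^{-Kt/(1+t)}$ with $t=1/\kappa$. To land on the stated form I would then lower-bound $\Phi_K$ by using $1$-strong convexity of $r$ and $s$, which gives $V^r_{x_K}(x^*) \ge \tfrac12\|x^*-x_K\|^2$ and likewise for $y$; this produces the factor $2$ and the $\|x^*-x_K\|^2/\eta_x + \|y^*-y_K\|^2/\eta_y$ on the left. The main obstacle is really just the bookkeeping in the first step — making sure the bilinear terms cancel cleanly when $\mathcal{X},\mathcal{Y}$ are constrained (one must use the variational-inequality form of optimality and the fact that $(x^*,y^*)$ solves the same VI), and correctly tracking which Bregman terms are nonnegative and can be thrown away versus which are needed. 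Everything after the one-step inequality is routine.
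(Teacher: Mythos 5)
Your argument is correct and is essentially the paper's proof in a different dress: the paper obtains the same one-step estimate via the mirror-descent lemma in function-value form together with the nonnegativity of $\phi(x_{k+1},y^*)-\phi(x^*,y_{k+1})$ at the saddle point, and then telescopes with weights $\gamma^k$, $\gamma=1+\kappa^{-1}$, which is equivalent to iterating your contraction of the Lyapunov function $\frac{1}{\eta_x}V^{r}_{x_k}(x^*)+\frac{1}{\eta_y}V^{s}_{y_k}(y^*)$ (your VI/monotonicity bookkeeping, where the bilinear terms cancel by skew-symmetry after subtracting the saddle-point optimality condition, is just the gradient-form counterpart of that step). The only slip is the sign in your three-point identity --- it should read $\Ip{\nabla r(x_{k+1})-\nabla r(x_k)}{x_{k+1}-x^*}=V^{r}_{x_{k+1}}(x^*)+V^{r}_{x_k}(x_{k+1})-V^{r}_{x_k}(x^*)$ --- which does not affect your (correct) one-step inequality or the final bound.
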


PD method is the following approximation of PPM: 
\begin{equation}
\left\{
\begin{aligned}
\ty_{k+1} &= y_{k} + \theta (y_{k} - y_{k-1})  \\
x_{k+1} &= \arg\min_{x} \Ip{A^\top \ty_{k+1}}{x} + \frac{1}{\eta_x}V^{r}_{x_k}(x) + F(x)  \\
y_{k+1} &= \arg\min_{y} -\Ip{A x_{k+1}}{y} + \frac{1}{\eta_y}V^{s}_{y_k}(y) + H(x)
\end{aligned}
\right.
\label{eq:pd_rule}
\end{equation}
where $\theta = 1/\gamma$ and $\gamma \leq 1 + \min(\mu_x \eta_x, \mu_y \eta_y)$. Different from PPM, the PD method uses a pseudo-gradient $A^\top \ty_{k+1}$ computed at the extrapolated $\ty_{k+1}$, instead of the actual gradient $A^\top y_{k+1}$ at $y_{k+1}$, to update $x_{k+1}$. This approximation leads to an implementable algorithm with the same linear convergence as PPM. 
\begin{theorem}[\cite{chambolle2016ergodic}]\label{thm:pd_rule}
	If 
	$\sqrt{\mu_x/\mu_y}\eta_x$$\,=\,$$\sqrt{\mu_y/\mu_x} \eta_y$$\,=\,$$1/2\|A\|$ and $y_{-1} = y_{0}$,
	then the iterates of the PD update rule \eqref{eq:pd_rule} satisfy the same conclusion as Lemma \ref{lem:ppm_rule}, with $\kappa$$\,=\,$$2\|A\|/\sqrt{\mu_x \mu_y}$.
\end{theorem}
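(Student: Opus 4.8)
The plan is to mimic the proof of Lemma~\ref{lem:ppm_rule} for PPM, but carefully tracking the error introduced by replacing the implicit term $A^\top y_{k+1}$ with the extrapolated pseudo-gradient $A^\top\ty_{k+1}$, where $\ty_{k+1} = y_k + \theta(y_k - y_{k-1})$. First I would write down the first-order optimality conditions for the two proximal updates in \eqref{eq:pd_rule}. For the $x$-update, optimality gives an inequality of the form $\Ip{A^\top\ty_{k+1} + \nabla F(x_{k+1})}{x - x_{k+1}} + \tfrac1{\eta_x}(V^r_{x_k}(x) - V^r_{x_{k+1}}(x) - V^r_{x_k}(x_{k+1})) \ge \mu_x V^r_{x_{k+1}}(x)$ for all $x$ (using relative strong convexity of $F$), and symmetrically for the $y$-update with $-Ax_{k+1}$ and $\nabla H(y_{k+1})$. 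Evaluating these at the saddle point $x = x^*$ and $y = y^*$ and adding them, the $\nabla F, \nabla H$ terms combine with the saddle-point inequalities $\Ip{\nabla F(x^*) + A^\top y^*}{x_{k+1}-x^*}\ge 0$ and $\Ip{\nabla H(y^*) - Ax^*}{y_{k+1}-y^*}\ge 0$, and the bilinear cross-terms $\Ip{A^\top\ty_{k+1}}{x_{k+1}-x^*}$ and $-\Ip{Ax_{k+1}}{y_{k+1}-y^*}$ must be consolidated.

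The key algebraic manipulation — and the main obstacle — is the telescoping of the bilinear error term. The difference between using $\ty_{k+1}$ and $y_{k+1}$ contributes $\Ip{A^\top(\ty_{k+1} - y_{k+1})}{x_{k+1} - x^*} = \Ip{A^\top(y_k - y_{k+1})}{x_{k+1}-x^*} + \theta\Ip{A^\top(y_k - y_{k-1})}{x_{k+1}-x^*}$. The standard Chambolle--Pock trick is to split the first piece as $\Ip{A^\top(y_k - y_{k+1})}{x_{k+1}-x^*} = \Ip{A^\top(y_k - y_{k+1})}{x_{k}-x^*} + \Ip{A^\top(y_k - y_{k+1})}{x_{k+1}-x_k}$, so that the first summand telescopes against the $\theta\Ip{A^\top(y_k-y_{k-1})}{x_k - x^*}$ term of the next iteration (after scaling by $\gamma$, since $\theta = 1/\gamma$), while the second summand $\Ip{A^\top(y_k - y_{k+1})}{x_{k+1}-x_k}$ is a genuine "crossing" error that I would bound by Young's inequality: $|\Ip{A^\top(y_k - y_{k+1})}{x_{k+1}-x_k}| \le \|A\|\,\|y_k - y_{k+1}\|\,\|x_{k+1}-x_k\|$, which, via $ab \le \tfrac12(a^2/c + cb^2)$ with a suitably chosen $c$, is absorbed into the "descent" terms $\tfrac1{\eta_x}V^r_{x_k}(x_{k+1}) \ge \tfrac1{2\eta_x}\|x_{k+1}-x_k\|^2$ and $\tfrac1{\eta_y}V^s_{y_k}(y_{k+1}) \ge \tfrac1{2\eta_y}\|y_{k+1}-y_k\|^2$. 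This is precisely where the hypothesis $\sqrt{\mu_x/\mu_y}\,\eta_x = \sqrt{\mu_y/\mu_x}\,\eta_y = 1/(2\|A\|)$ is used: it makes $\|A\|^2\eta_x\eta_y = 1/4$ so that the cross term is dominated with room to spare, and the condition $y_{-1}=y_0$ kills the boundary term of the telescoping sum at $k=0$.

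Having done this, I would arrive at a one-step contraction of the Lyapunov function $\mathcal{E}_k := \tfrac1{\eta_x}\|x^*-x_k\|^2 + \tfrac1{\eta_y}\|y^*-y_k\|^2 + (\text{a nonnegative remainder from the uncancelled part of the telescope and the $\|x_k-x_{k-1}\|^2$ slack})$, of the form $\gamma\,\mathcal{E}_{k+1} \le \mathcal{E}_k$ with $\gamma = 1 + \min(\mu_x\eta_x,\mu_y\eta_y)$, i.e. $\mathcal{E}_K \le \gamma^{-K}\mathcal{E}_0$. Finally, since $\gamma = 1+1/\kappa$ with $\kappa = 2\|A\|/\sqrt{\mu_x\mu_y}$ (using $\min(\mu_x\eta_x,\mu_y\eta_y) = \sqrt{\mu_x\mu_y}/(2\|A\|) = 1/\kappa$ by the stepsize choice), I would convert $\gamma^{-K} = (1+1/\kappa)^{-K} \le \exp(-K/(1+\kappa))$ via $\log(1+t) \ge t/(1+t)$, identify the leading terms of $\mathcal{E}_K$ with $\|x^*-x_K\|^2/\eta_x + \|y^*-y_K\|^2/\eta_y$ and of $\mathcal{E}_0$ (after a short argument bounding the distance terms by the Bregman divergences, e.g. absorbing constants into the factor $2$) with $V^r_{x_0}(x^*)/\eta_x + V^s_{y_0}(y^*)/\eta_y$, and conclude exactly the statement of Lemma~\ref{lem:ppm_rule} with the claimed $\kappa$. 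The one delicate point to get right is ensuring the remainder terms in $\mathcal{E}_k$ are genuinely nonnegative and that the constant $2$ on the right-hand side suffices to swallow them at initialization.
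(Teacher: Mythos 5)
Your overall strategy is exactly the paper's: prox/mirror-descent optimality inequalities for the two updates, evaluation at the saddle point together with nonnegativity of $\phi(x_{k+1},y^*)-\phi(x^*,y_{k+1})$, a split of the extrapolation error into a telescoping inner-product part plus a Young residual absorbed by the proximal descent terms (this is where $\eta_x\eta_y\|A\|^2=\tfrac14$ and $y_{-1}=y_0$ enter), geometric weights $\gamma^k$ with $\theta=1/\gamma$, $\gamma=1+\min(\mu_x\eta_x,\mu_y\eta_y)$, and the conversion $\gamma^{-K}\le\exp(-K/(1+\kappa))$ with $\kappa=2\|A\|/\sqrt{\mu_x\mu_y}$.

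However, the key decomposition as you wrote it would not telescope. You break the current-increment piece $\Ip{A^\top(y_k-y_{k+1})}{x_{k+1}-x^*}$ at $x_k$ and keep $\theta\Ip{A^\top(y_k-y_{k-1})}{x_{k+1}-x^*}$ whole, claiming the summand $\Ip{A^\top(y_k-y_{k+1})}{x_k-x^*}$ cancels against the next iteration's $\theta$-term; but at iteration $k+1$ that term is $\theta\Ip{A^\top(y_{k+1}-y_k)}{x_{k+2}-x^*}$, so the $x$-indices do not match and no cancellation occurs. The standard fix (and what the paper does) is the mirror-image split: keep $\Ip{y_k-y_{k+1}}{A(x_{k+1}-x^*)}$ whole as the telescoping sequence and break the $\theta$-weighted previous-increment term at $x_k$, which produces the residual $\theta\Ip{y_{k-1}-y_k}{A(x_{k+1}-x_k)}$, i.e., the \emph{previous} $y$-increment paired with the current $x$-increment. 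Two consequences follow that your sketch glosses over: (i) this residual cannot be absorbed within the same iteration as you suggest; its $\|y_{k-1}-y_k\|^2$ half must be paid from a portion of $\tfrac1{2\eta_y}\|y_k-y_{k-1}\|^2$ reserved at the previous iteration, so the quadratic $y$-difference terms themselves telescope with weight $\theta$ (this is precisely how the stepsize condition and $y_{-1}=y_0$ are used); and (ii) after summation an uncancelled boundary term $\gamma^{K-1}\Ip{y_{K-1}-y_K}{A(x_K-x^*)}$ remains, which must be absorbed by Cauchy--Schwarz into the leftover $\|y_K-y_{K-1}\|^2$ term and a fraction of $V^{r}_{x_K}(x^*)$ --- this is where the factor $2$ in the final bound comes from, so your "constant $2$ swallows it" remark is the right instinct but needs this explicit step. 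With that bookkeeping corrected, your argument coincides with the paper's proof.
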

For completeness, we provide  a proof in Appendix \ref{sec:pd_proof}.
The PD method obtains the optimal iteration complexity of $\Ord(\|A\| \log(1/\varepsilon)/\sqrt{\mu_x \mu_y})$ \cite{xie2020lower,han2021lower}. We also note that a particular version of the PD method can also be interpreted as an \textit{exact} PPM update using the the Bregman divergence corresponding to the bilinear operator $A$ \cite{he2012convergence}.

\subsection{Accelerated Convex Minimization \texorpdfstring{\cite{lan2018optimal}}{[LZ18]}} %
\label{sec:32}

In this section, we illustrate that the PD method can also be deployed to optimally solve strongly convex and smooth minimization problems. Consider the problem: $\min_x f(x)$, where function $f$ is $L$-smooth and $\mu$-strongly convex, and  the optimal solution is $x^*$.

First, we reformulate it into the following minimax problem by introducing a dual variable $u$ and \textit{lifting} it into a larger variable space $(x,u)$:
\begin{align}
\label{eq:sc_agd_minimax_problem}
\min_x \Big[\,f(x) = \max_u\; \frac{\mu}{2} \|x\|^2 + \Ip{x}{u} - \uf^*(u)\,\Big]\;,
\end{align}
where 
$$\uf(x)=f(x)-\frac{\mu}{2}\|x\|^2, \,\,\,\,\uf^*(u)=\max_x \Ip{u}{x}-\uf(x).$$
Here $\uf^*$ is the Fenchel/convex conjugate of $\uf$. 
Following  the definition, we have $\uf(x)$ is $(L-\mu)$-smooth and convex. A proof is provided in Appendix \ref{sec:sc_minux_quad_pf} for completeness.
Then its dual $\uf^*$ is $(L-\mu)^{-1}$ strongly convex with respect to Euclidean norm \cite{beck2017first}.

Notice that this new minimax problem is in the form \eqref{eq:scsc_prox_problem}, with bilinear coupling matrix $A=\Id$, $\mu$-strongly convex function $F(x)=\frac{\mu}{2} \|x\|^2 $, and 
$1$-relatively strongly convex function $H(u)$$\,=\,$$\uf^*(u)$ with respect to the Bregman divergence $V^{\uf^*}_{u_{k}}(u)$ generated by $\uf^*$ itself.

Then, if we instantiate the PD update rule \eqref{eq:pd_rule} for this problem, we obtain the updates:
\begin{equation}
\left\{
\begin{aligned}
\tu_{k+1} &= u_{k} + \theta (u_{k} - u_{k-1}) \\
x_{k+1} &= \arg\min_{x} \Ip{\tilde{u}_{k+1}}{x} +\frac{\mu}{2} \|x\|^2+\frac{1}{2\eta_x}\|x-x_k\|^2  \\
u_{k+1} &= \arg\min_{u} -\Ip{x_{k+1}}{u} + \uf^*(u) + \frac1{\eta_u} V^{\uf^*}_{u_{k}}(u)
\end{aligned}
\right.
\label{eq:sc_agd_rule}
\end{equation}

\begin{corollary}[of Theorem \ref{thm:pd_rule}] \label{cor:sc_agd}
	Let $u_{-1} = u_{0} = \nabla f(x_0)$. Then the iterates of the PD update rule \eqref{eq:sc_agd_rule} with stepsizes $\eta_x$$\,=\,$$1/\sqrt{\mu (L - \mu)}$, $\eta_u$$\,=\,$$\sqrt{\mu/(L - \mu)}$ and $\theta$$\,=\,$$(1+\sqrt{\mu/(L-\mu)})^{-1}$, for problem \eqref{eq:sc_agd_minimax_problem} satisfies $\|x^* - x_K\|^2 \leq \Ord(\exp(-K/2\sqrt{\kappa-1}) \| x^* - x_0\|^2)$ for all $K \geq 0$, where $\kappa = L/\mu$.
\end{corollary}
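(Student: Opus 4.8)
The plan is to read the lifted problem \eqref{eq:sc_agd_minimax_problem} as a concrete instance of the bilinearly coupled template \eqref{eq:scsc_prox_problem} and then invoke the primal--dual convergence analysis of \cite{chambolle2016ergodic} (recapped in Appendix~\ref{sec:pd_proof}), whose conclusion is the one stated in Lemma~\ref{lem:ppm_rule} and Theorem~\ref{thm:pd_rule}, after checking that the prescribed stepsizes $\eta_x,\eta_u,\theta$ meet its hypotheses. Concretely, I match \eqref{eq:sc_agd_minimax_problem} to \eqref{eq:scsc_prox_problem} with bilinear operator $A=\Id$ (hence $\|A\|=1$), primal term $F(x)=\tfrac{\mu}{2}\|x\|^2$, which is $\mu_x=\mu$ relatively strongly convex with respect to the Euclidean d.g.f.\ $r(x)=\tfrac12\|x\|^2$, and dual term $H(u)=\uf^*(u)$ equipped with $\uf^*$ itself as its d.g.f., so that $H$ is $\mu_y=1$ relatively strongly convex with respect to $V^{\uf^*}_{\cdot}(\cdot)$. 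Under this identification the update rule \eqref{eq:sc_agd_rule} is exactly the PD rule \eqref{eq:pd_rule} for \eqref{eq:sc_agd_minimax_problem}, and the unique saddle point is $(x^*,u^*)$ with $x^*=\arg\min_x f(x)$ and $u^*=\nabla\uf(x^*)$.

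The second step is to verify the stepsize and extrapolation conditions. Recall from Section~\ref{sec:32} that $\uf=f-\tfrac{\mu}{2}\|\cdot\|^2$ is $(L-\mu)$-smooth and convex, so $\uf^*$ is $(L-\mu)^{-1}$-strongly convex; thus the strong-convexity moduli of the two d.g.f.s are $\alpha_r=1$ and $\alpha_s=(L-\mu)^{-1}$. The analysis requires (i) $\eta_x\eta_u\|A\|^2\le\alpha_r\alpha_s$, (ii) the balance $\mu_x\eta_x=\mu_y\eta_u$, and (iii) $1/\theta=1+\min(\mu_x\eta_x,\mu_y\eta_u)$. Substituting $\eta_x=1/\sqrt{\mu(L-\mu)}$ and $\eta_u=\sqrt{\mu/(L-\mu)}$ gives $\eta_x\eta_u\|A\|^2=(L-\mu)^{-1}=\alpha_r\alpha_s$ (met with equality), $\mu_x\eta_x=\mu_y\eta_u=\sqrt{\mu/(L-\mu)}$, and $1+\sqrt{\mu/(L-\mu)}=1/\theta$, so all three hold. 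Hence the analysis delivers the conclusion of Lemma~\ref{lem:ppm_rule}: $\tfrac{1}{\eta_x}\|x^*-x_K\|^2+\tfrac{1}{\eta_u}(\text{dual term})\le 2\exp(-K/(1+\kappa_0))\big(\tfrac{1}{\eta_x}V^r_{x_0}(x^*)+\tfrac{1}{\eta_u}V^{\uf^*}_{u_0}(u^*)\big)$, where the rate-determining quantity is the scale-invariant $\kappa_0=1/\min(\mu_x\eta_x,\mu_y\eta_u)=\sqrt{(L-\mu)/\mu}=\sqrt{\kappa-1}$ for $\kappa=L/\mu$.

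It remains to turn this Bregman-weighted potential bound into the claimed Euclidean iterate bound. I drop the nonnegative dual term, multiply by $\eta_x$, and estimate the two initial divergences: $V^r_{x_0}(x^*)=\tfrac12\|x^*-x_0\|^2$, and, via the conjugate-duality identity $V^{g^*}_{v}(v')=V^{g}_{x'}(x)$ applied with $g=\uf$ (valid because the warm start puts $u_0$ at the conjugate point of $x_0$ and $u^*=\nabla\uf(x^*)$), $V^{\uf^*}_{u_0}(u^*)=V^{\uf}_{x^*}(x_0)\le\tfrac{L-\mu}{2}\|x_0-x^*\|^2$ by $(L-\mu)$-smoothness of $\uf$. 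Since $\eta_x/\eta_u=1/\mu$, combining these yields $\|x^*-x_K\|^2\le \kappa\,\exp(-K/(1+\sqrt{\kappa-1}))\,\|x^*-x_0\|^2$; absorbing the $\kappa$ prefactor into $\Ord(\cdot)$ and using $1+\sqrt{\kappa-1}\le 2\sqrt{\kappa-1}$ (true once $\kappa\ge 2$; the well-conditioned case being immediate) gives $\|x^*-x_K\|^2\le\Ord(\exp(-K/(2\sqrt{\kappa-1}))\|x^*-x_0\|^2)$.

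The main obstacle is reconciling the prescribed stepsizes with the convergence theorem: the clean form of Theorem~\ref{thm:pd_rule} is stated for $1$-strongly convex d.g.f.s and the particular balanced choice $\sqrt{\mu_x/\mu_y}\,\eta_x=\sqrt{\mu_y/\mu_x}\,\eta_y=1/(2\|A\|)$, for which $\kappa=2\|A\|/\sqrt{\mu_x\mu_y}$, whereas here the dual d.g.f.\ $\uf^*$ has modulus $(L-\mu)^{-1}\ne 1$ and the stepsizes \emph{saturate} the product condition $\eta_x\eta_u\|A\|^2\le\alpha_r\alpha_s$ rather than leaving the factor-$2$ slack. One therefore has to go through the un-normalized (general Bregman) version of the \cite{chambolle2016ergodic} analysis and recognize that the genuine rate parameter is $1/\min(\mu_x\eta_x,\mu_y\eta_u)=\sqrt{\kappa-1}$, not $2\|A\|/\sqrt{\mu_x\mu_y}$. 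A secondary, routine difficulty is the conversion of the joint primal--dual potential into a bound depending only on $\|x^*-x_0\|^2$, which hinges on the conjugate-duality identity and the specific choice of initial dual iterate.
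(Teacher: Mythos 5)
Your proposal is correct and follows essentially the same route as the paper: the paper's own proof simply reruns the argument of Theorem~\ref{thm:pd_rule} for the lifted instance and adds the single extra step of bounding $V^{\uf^*}_{u_0}(u^*)\le \tfrac{L-\mu}{2}\|x_0-x^*\|^2$ via $u_0=\nabla\uf(x_0)$, $u^*=\nabla\uf(x^*)$ and Lemma~\ref{lem:dual_bregman_div}, exactly as you do. Your additional bookkeeping (general Bregman d.g.f.\ with modulus $(L-\mu)^{-1}$, the saturated stepsize condition, rate parameter $1/\min(\mu_x\eta_x,\mu_y\eta_u)=\sqrt{\kappa-1}$, and the $\kappa$-dependent prefactor absorbed into $\Ord(\cdot)$) just makes explicit what the paper leaves implicit.
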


A proof is in Appendix \ref{sec:sc_agd_pf}. Note that this matches the optimal convergence rate achieved by Nesterov's accelerated gradient descent (AGD) methods \cite{nesterov2018lectures}.

Finally, we show that Bregman proximal update rule in \eqref{eq:sc_agd_rule} admits an elegant implementation based on the gradients, which resembles AGD.
\begin{lemma} \label{lem:sc_agd_update}
	For problem \eqref{eq:sc_agd_minimax_problem}, iterates $x_{k}$ of the PD update rule \eqref{eq:sc_agd_rule} are the same as the iterates $x_{k}$ of the following update rule when $(u_{-1},u_{0})$$\,=\,$$(\nabla f(\ux_{-1}),\nabla f(\ux_{0}))$.
	\begin{equation}
	\left\{
	\begin{aligned}
	\tnabla_{k+1} &= \nabla \uf(\ux_{k}) + \theta (\nabla \uf(\ux_{k}) - \nabla \uf(\ux_{k-1}))  \\
	x_{k+1} &= (x_k - \eta_x \tnabla_{k+1})/(1+\eta_x \mu) \\
	\ux_{k+1} &= ({\ux_{k} + \eta_u x_{k+1}})/({1+\eta_u}) 
	\end{aligned}
	\right.\label{eq:sc_equiv_agd_rule} 
	\end{equation}
\end{lemma}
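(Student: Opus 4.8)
The plan is to generate the iterates of \eqref{eq:sc_equiv_agd_rule} and \eqref{eq:sc_agd_rule} side by side, maintaining the single invariant $u_k=\nabla\uf(\ux_k)$ for all $k\ge -1$ (equivalently, by Fenchel--Young duality, $\ux_k=\nabla\uf^*(u_k)$). Concretely, one runs \eqref{eq:sc_equiv_agd_rule}, defines $u_k:=\nabla\uf(\ux_k)$, and proves by induction on $k$ that $\{(x_k,u_k)\}$ is precisely the sequence produced by the Bregman primal--dual rule \eqref{eq:sc_agd_rule}. The two $x$-sequences then coincide, which is the assertion. The base case $k\in\{-1,0\}$ is immediate from the prescribed initialization.

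For the inductive step, suppose $u_j=\nabla\uf(\ux_j)$ for $j\le k$. The extrapolation in \eqref{eq:sc_agd_rule} then reads $\tu_{k+1}=u_k+\theta(u_k-u_{k-1})=\nabla\uf(\ux_k)+\theta(\nabla\uf(\ux_k)-\nabla\uf(\ux_{k-1}))=\tnabla_{k+1}$; that is, the dual momentum term equals, verbatim, the extrapolated gradient of \eqref{eq:sc_equiv_agd_rule}. The $x$-update in \eqref{eq:sc_agd_rule} minimizes the $(\mu+1/\eta_x)$-strongly convex quadratic $\Ip{\tu_{k+1}}{x}+\tfrac{\mu}{2}\|x\|^2+\tfrac{1}{2\eta_x}\|x-x_k\|^2$, and its first-order condition gives $x_{k+1}=(x_k-\eta_x\tu_{k+1})/(1+\eta_x\mu)$; plugging in $\tu_{k+1}=\tnabla_{k+1}$ recovers the middle line of \eqref{eq:sc_equiv_agd_rule}. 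Both steps are routine quadratic algebra.

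The one substantive step is the dual update. Here $u_{k+1}$ minimizes $g(u):=-\Ip{x_{k+1}}{u}+\uf^*(u)+\tfrac{1}{\eta_u}V^{\uf^*}_{u_k}(u)$; writing $V^{\uf^*}_{u_k}(u)=\uf^*(u)-\uf^*(u_k)-\Ip{\nabla\uf^*(u_k)}{u-u_k}$, the optimality condition is $0\in -x_{k+1}+(1+1/\eta_u)\,\partial\uf^*(u_{k+1})-\tfrac{1}{\eta_u}\nabla\uf^*(u_k)$. The inductive hypothesis, via conjugacy, gives $\nabla\uf^*(u_k)=\ux_k$, so some $p\in\partial\uf^*(u_{k+1})$ satisfies $p=(\eta_u x_{k+1}+\ux_k)/(1+\eta_u)$, which is exactly $\ux_{k+1}$ from the last line of \eqref{eq:sc_equiv_agd_rule}. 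Thus $\ux_{k+1}\in\partial\uf^*(u_{k+1})$, equivalently $u_{k+1}\in\partial\uf(\ux_{k+1})=\{\nabla\uf(\ux_{k+1})\}$ since $\uf$ is differentiable, i.e.\ $u_{k+1}=\nabla\uf(\ux_{k+1})$, which restores the invariant and closes the induction.

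I expect the dual update to be the main obstacle: it requires converting a Bregman-proximal minimization carried out over the conjugate variable $u$ into the explicit convex-combination update for $\ux$, which leans on the conjugacy identity $\nabla\uf^*=(\nabla\uf)^{-1}$ (the Fenchel--Young equality) together with the smoothness/strong-convexity duality between $\uf$ and $\uf^*$ recorded just above the lemma. One must also handle $\partial\uf^*$ with a little care, since $\uf^*$ need not be everywhere differentiable, though differentiability of $\uf$ suffices to pin down $u_{k+1}$ uniquely. The remaining work is elementary.
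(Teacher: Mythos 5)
Your proposal is correct and follows essentially the same route as the paper's proof: an induction maintaining the invariant $u_k=\nabla\uf(\ux_k)$, matching the extrapolated dual term with $\tnabla_{k+1}$, solving the quadratic $x$-step explicitly, and handling the dual step by fixing the subgradient $\ux_k\in\partial\uf^*(u_k)$ in the Bregman term so that the minimizer satisfies $\ux_{k+1}\in\partial\uf^*(u_{k+1})$, i.e.\ $u_{k+1}=\nabla\uf(\ux_{k+1})$. The only cosmetic difference is that you conclude uniqueness via differentiability of $\uf$ (inverting the subdifferential) whereas the paper invokes strong convexity of $\uf^*$; both are valid and the arguments are otherwise identical.
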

A proof is in Appendix \ref{sec:sc_agd_update_pf}. 
This close connection between the PD method and AGD is first identified in~\cite{lan2018optimal}. The above analysis based on the primal-dual interpretation is conceptually much simpler than the more opaque estimate sequence \cite{nesterov2018lectures} or Lyapunov-based \cite{lan2012optimal} analyses of AGD.
Note that the above update rule is slightly different from the one used in~\cite{lan2018optimal}. The latter first use extrapolated primal iterate $\tx_{k+1}$ to update dual iterate $u_{k+1}$, whereas we use extrapolated dual iterate $\tu_{k+1}$ to update the primal iterate $x_{k+1}$.

\section{Lifted Primal-Dual method}
The previous section indicates that both bilinear minimax problems and smooth strongly convex minimization problems can be optimally solved using the same PD method after appropriate reformulation. Naturally, this  suggests that the PD method has the potential to solve the Bi-SC-SC problem of our interest:
\begin{equation}\label{eq:scsc_problem}
\min_{x \in \cX} \max_{y \in \cY} \; [\phi(x, y) = f(x) + \Ip{y}{Ax} - h(y)],
\end{equation}
which at consists of a bilinear term $\Ip{y}{Ax}$ and two smooth strongly-convex functions $f$ and $h$. 

Our strategy to solve \eqref{eq:scsc_problem} is to first transform the objective into a form where the proximal operators are easy to compute and then solve this new objective using the PD method. Introducing dual variables $u$ and $v$ for $f$ and $h$, respectively, 
the Bi-SC-SC problem can be equivalently reformulated (or {\em lifted}) as
\begin{align}
\min_{x \in \cX, v}\; \max_{y \in \cY, u}\; \Phi(x,y;u,v) \,, \text{ where } \label{eq:scsc_reform_problem}
\end{align}
\begin{align}
&\Phi(x,y;u,v) := \big[-\uf^*(u) + \Ip{u}{x} + ({\mu_x}/2) \|x\|^2\big] \nonumber \\
&+ \Ip{y}{Ax} - \big[({\mu_y}/2) \|y\|^2 + \Ip{v}{y} - \uh^*(v)\big]
\,,
\label{eq:scsc_reform_problem_obj}
\end{align}
\begin{align}
\uf^*(u) &:= 
\max_{x}
\Ip{u}{x} - [\uf := f(x) - ({\mu_x}/2) \|x\|^2] \,, \text{ and }\nonumber \\
\uh^*(v) &:= 
\max_{y}
\Ip{v}{y} - [\uh := h(x) - ({\mu_y}/2) \|y\|^2].
\end{align}
By Fenchel duality, it follows that $\phi(x,y)=\min_{v}\max_{u}\Phi(x,y;u,v)$ (Lemma \ref{lem:fenchel_props}(c)).
Note that both $\uf^*$ and $\uh^*$ are strongly convex.
Intriguingly, the first three terms, the middle three terms, and  the last three terms in \eqref{eq:scsc_reform_problem} are all of the form \eqref{eq:scsc_prox_problem} amenable for Primal-Dual approach. To this end, we introduce  the following  the PD update to each of the four variables with their respective stepsizes, Bregman divergences, and extrapolation steps.
\begin{align}
&(\tx_{k+1}, \ty_{k+1}) = (1+\theta)(x_{k}, y_{k}) - \theta(x_{k-1}, y_{k-1}) \nonumber \\
&(\tu_{k+1}, \tv_{k+1}) = (1+\theta)(u_{k}, v_{k}) - \theta(u_{k-1}, v_{k-1}) \nonumber \\
&x_{k+1} = \arg\min_{x \in \cX} \Ip{A^\top \ty_{k+1} + \tu_{k+1}}{x} + \nonumber \\
&\;\;\;\;\;\;\;\;\;\;\;\;\;\;\;\;\;\;\;\;\;\;\;\;\; \|x-x_{k}\|^2/{2\eta_{x}} + {\mu_x}\|x\|^2/2 \nonumber\\
&y_{k+1} = \arg\min_{y \in \cY} -\Ip{A^\top \tx_{k+1} + \tv_{k+1}}{y} + \label{eq:scsc_lpd_rule} \\
&\;\;\;\;\;\;\;\;\;\;\;\;\;\;\;\;\;\;\;\;\;\;\;\;\; \|y-y_{k}\|^2/{2\eta_{y}} + {\mu_y}\|y\|^2/2 \nonumber\\
&u_{k+1} = \arg\min_{u} -\Ip{x_{k+1}}{u} + \uf^*(u) + V^{\uf^*}_{u_k}(u)/{\eta_u} \nonumber \\
&v_{k+1} = \arg\min_{v} -\Ip{y_{k+1}}{v} + \uh^*(v) + V^{\uh^*}_{v_k}(v)/{\eta_v} \nonumber 
\end{align}

We show that the above update rule can be easily implemented using Algorithm \ref{algo:lpd}, which we call the \emph{Lifted Primal-Dual (LPD)} method.
\begin{lemma}[Same as Lemma \ref{lem:scsc_lpd_update_appendix}] \label{lem:scsc_lpd_update}
	For problem \eqref{eq:scsc_reform_problem}, iterates $(x_{k},y_{k})$ of the PD update rule \eqref{eq:scsc_lpd_rule} is the same as the iterates $(x_{k}, y_{k})$ of Algorithm \ref{algo:lpd}, when $(u_{-1}, u_0)$$\,=\,$$(\nabla \uf(\ux_{-1}), \nabla \uf(\ux_0))$,  $(v_{-1}, v_0)$$\,=\,$$(\nabla \uf(\uy_{-1}), \nabla \uf(\uy_0))$, and stepsizes $(\eta_{x,k}, \eta_{y,k}, \eta_{u,k}, \eta_{v,k}, \theta_{k})$ are  invariant to $k$.
\end{lemma}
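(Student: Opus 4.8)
The plan is to show that the Bregman-proximal updates for the dual variables $u$ and $v$ in \eqref{eq:scsc_lpd_rule}, which use the divergences $V^{\uf^*}_{u_k}$ and $V^{\uh^*}_{v_k}$ generated by the conjugates themselves, can be unrolled into explicit gradient-based updates, exactly as was done in the single-variable case in Lemma~\ref{lem:sc_agd_update}. The key algebraic fact is the standard conjugate-duality identity: for a closed convex differentiable $\uf$, the minimizer
\begin{align*}
u_{k+1} = \arg\min_u\; -\Ip{x_{k+1}}{u} + \uf^*(u) + \tfrac{1}{\eta_u}V^{\uf^*}_{u_k}(u)
\end{align*}
has optimality condition $-x_{k+1} + \nabla\uf^*(u_{k+1}) + \tfrac{1}{\eta_u}(\nabla\uf^*(u_{k+1}) - \nabla\uf^*(u_k)) = 0$, i.e. $\nabla\uf^*(u_{k+1}) = (x_{k+1} + \tfrac{1}{\eta_u}\nabla\uf^*(u_k))/(1+\tfrac1{\eta_u})$. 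Introducing the auxiliary sequence $\ux_k := \nabla\uf^*(u_k)$ (so that dually $u_k = \nabla\uf(\ux_k)$, which matches the stated initialization $u_0 = \nabla\uf(\ux_0)$), this becomes the convex-combination update $\ux_{k+1} = (\ux_k + \eta_u x_{k+1})/(1+\eta_u)$ after rescaling, and $u_{k+1} = \nabla\uf(\ux_{k+1})$. The extrapolation step for $\tu_{k+1}$ then translates, via $u_j = \nabla\uf(\ux_j)$, into the extrapolated pseudo-gradient $\tnabla^f_{k+1} = \nabla\uf(\ux_k) + \theta(\nabla\uf(\ux_k) - \nabla\uf(\ux_{k-1}))$. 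An identical computation applies to $v$, $\uh^*$, and the sequence $\uy_k := \nabla\uh^*(v_k)$.

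Next I would handle the primal updates. The $x$-update in \eqref{eq:scsc_lpd_rule} is a Euclidean-prox step whose optimality condition gives $x_{k+1} = (x_k - \eta_x(A^\top\ty_{k+1} + \tu_{k+1}))/(1+\eta_x\mu_x)$ — with a projection onto $\cX$ if $\cX \neq \mathbb{R}^n$ — and substituting $\tu_{k+1} = \nabla\uf(\ux_k) + \theta(\nabla\uf(\ux_k)-\nabla\uf(\ux_{k-1}))$ from the previous paragraph produces the explicit $x$-update of Algorithm~\ref{algo:lpd}. Symmetrically for $y$, using $\tv_{k+1}$ expressed through $\nabla\uh$ at the $\uy$-iterates. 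At this point every update in \eqref{eq:scsc_lpd_rule} has been rewritten purely in terms of $x_k, y_k, \ux_k, \uy_k$ and gradient oracle calls to $\uf = f - (\mu_x/2)\|\cdot\|^2$ and $\uh = h - (\mu_y/2)\|\cdot\|^2$ (equivalently, to $\nabla f$ and $\nabla h$ with a linear correction), matching Algorithm~\ref{algo:lpd} line by line; the claimed correspondence of the $(x_k,y_k)$ iterates then follows by induction on $k$, with the base case secured by the stated initialization $(u_{-1},u_0) = (\nabla\uf(\ux_{-1}),\nabla\uf(\ux_0))$ and likewise for $v$.

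The induction itself is routine once the per-step translation is in place; the one point requiring a little care is the bijective change of variables $u \leftrightarrow \ux$. This is valid because $\uf$ is convex and differentiable with $\uf^*$ strongly convex (both established in Section~\ref{sec:32} and reused here), so $\nabla\uf$ and $\nabla\uf^*$ are mutually inverse on the relevant domains; one must check that the initialization $u_0 = \nabla\uf(\ux_0)$ is consistent with $\ux_0 = \nabla\uf^*(u_0)$ and that the recursion preserves this relation — exactly the role of the hypothesis $(u_{-1},u_0) = (\nabla\uf(\ux_{-1}),\nabla\uf(\ux_0))$. I expect this bookkeeping — keeping the two representations of the dual iterates synchronized through the extrapolation step — to be the main (though still mild) obstacle; everything else is the same one-line prox-to-gradient manipulation used in Lemma~\ref{lem:sc_agd_update}, now carried out four times in parallel. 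Since the statement asserts this is identical to Lemma~\ref{lem:scsc_lpd_update_appendix}, the full details would be deferred to Appendix~\ref{sec:prelim_details}.
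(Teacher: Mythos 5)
Your proposal is correct and takes essentially the same route as the paper, which omits this proof by deferring to the argument for Lemma~\ref{lem:sc_agd_update}: induction on the invariant $u_k=\nabla\uf(\ux_k)$, $v_k=\nabla\uh(\uy_k)$, translating each Bregman-proximal dual step into the convex-combination update $\ux_{k+1}=(\ux_k+\eta_u x_{k+1})/(1+\eta_u)$ (and likewise for $\uy$) and each Euclidean-prox primal step into the explicit projected update, exactly as you do in parallel for the four variables. The only cosmetic difference is that, since $\uf^*$ and $\uh^*$ need not be differentiable, the paper fixes the subgradient $(\uf^*)'(u_k)=\ux_k$ inside the Bregman divergence (valid because $u_k=\nabla\uf(\ux_k)$, with uniqueness of the minimizer from strong convexity of $\uf^*$) rather than writing $\nabla\uf^*$, which is precisely the synchronization bookkeeping you flag.
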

We omit the proof of the above lemma as it is similar to that of Lemma \ref{lem:sc_agd_update}. 
\begin{algorithm}[t!]
	\SetAlgoLined
	\DontPrintSemicolon
	\SetKwProg{myproc}{Procedure}{}{}
	{\bf Required}: $\cX$, $\cY$, $(f, L_x, \mu_x)$, $(A, \|A\|)$, $(h, L_y, \mu_y)$, %
	$K$, \\
	$\{(\eta_{x,k}, \eta_{y,k}, \eta_{u,k}, \eta_{v,k}, \theta_{k})\}_{k=0}^{K-1}$ \\
	\nl Initialize $(x_{-1}, y_{-1}) = (x_0, y_0) \in \cX \times \cY$ \\
	\nl Set $\uf = f - (\mu_x/2) \|\cdot\|^2$, $\uh = h - (\mu_y/2) \|\cdot\|^2$, \\
	$(\ux_{-1}, \uy_{-1}) = (\ux_0, \uy_0) = (x_0, y_0)$ \\
	\For{$0\leq k\leq K-1$}{ 
		\nl $\tx_{k+1} = x_{k} + \theta_k (x_k - x_{k-1}) \;,\;\;\;\; 
		\nonumber \\
		\ty_{k+1} = y_{k} + \theta_k (y_k - y_{k-1})$, \\
		$\tnabla_{x, {k+1}} = \nabla \uf(\ux_{k}) + \theta_k (\nabla \uf(\ux_{k}) - \nabla \uf(\ux_{k-1})) \;,\;\;\;\; \nonumber \\
		\tnabla_{y, {k+1}} = \nabla \uh(\uy_{k}) + \theta_k (\nabla \uh(\uy_{k}) - \nabla \uh(\uy_{k-1}))$
		\\
		\label{algo_line:extrapolate_lpd} \\
		\nl $x_{k+1} = \cP_\cX((x_k - \eta_{x,k} (A^\top \ty_{k+1} + \tnabla_{x, {k+1}}))/$ \label{algo_line:x_update_lpd} \\
		${\color{white} y_{k+1} = \cP_\cY(}(1+\eta_{x,k}\mu_y))$ \\
		\nl $y_{k+1} = \cP_\cY((y_k + \eta_{y,k} (A \tx_{k+1} - \tnabla_{y, {k+1}}))/$ \label{algo_line:y_update_lpd} \\
		${\color{white} y_{k+1} = \cP_\cY(}(1+\eta_{x,k}\mu_y))$ \\
		\nl $\ux_{k+1} = ({\ux_{k} +\eta_{u,k}\, x_{k+1}})/({1+\eta_{u,k}})$ \;,\;\;\; \\
		\nl $\uy_{k+1} = ({\uy_{k} +\eta_{v,k}\, y_{k+1}})/({1+\eta_{v,k}})$
	}
	\nl \Return $(x_K, y_K, \ux_K, \uy_K)$
	\caption{LPD: Lifted Primal-Dual algorithm}
	\label{algo:lpd}
\end{algorithm}
Note that we update the variables in the order $(x, y) \to (\ux, \uy)$, where variables in tuples are simultaneously updated. However, any update ordering can be shown to achieve similar guarantees as we show, by using appropriate extrapolation steps and stepsize choices.
We extrapolate all the variables and gradients (in step \ref{algo_line:extrapolate_lpd} of Algorithm \ref{algo:lpd}) before the $x$ and $y$ updates (steps \ref{algo_line:x_update_lpd} and \ref{algo_line:y_update_lpd} of Algorithm \ref{algo:lpd}) to make our analysis a bit symmetric, hence simpler. However, depending on the order in which we update each of the variables we may not have to extrapolate all the variables. For example, if we update variables in the order $x \to y \to \ux \to \uy$, we only have to use ($a$) the extrapolated $\widetilde{y}_{k+1}$ and $\widetilde{\nabla}_{x,k+1}$ for updating $x$, and ($b$) the extrapolated $\widetilde{\nabla}_{y,k+1}$ for updating $y$.

\section{Convergence Analysis}
\label{sec:analysis}
Now we provide the main theoretical results.

\textbf{Strongly-Convex--Strongly-Concave Case.}
  LPD achieves the optimal iteration complexity for solving Bi-SC-SC problems  in  \eqref{eq:scsc_problem}.
Define the following condition numbers:
$\kappa_x={L_x}/{\mu_x},$  $\kappa_y={L_y}/{\mu_y}$,  $\kappa_{xy}={\|A\|}/{\sqrt{\mu_x \mu_y}}$, 
and define the meta-condition number:
$
\kappa=\sqrt{\kappa_x-1} + 2\kappa_{xy} + \sqrt{\kappa_y-1}\;.
$
Let $x^*,y^*$ be the optimal solution. For any candidate solution $(x,y)\in\mathcal{X}\times\mathcal{Y}$, we measure the suboptimality with,  
$$
\epsilon(x,y)=\kappa_{xy}(\mu_x\|x-x^*\|^2+\mu_y\|y-y^*\|^2).
$$

\begin{theorem} 
[Informal version of Corollary \ref{cor:smooth_scsc_primal_dual}]
\label{thm:smooth_scsc_primal_dual_informal}
For any $k\geq 0$, set the parameters 
\begin{align}
\gamma =\,  &1 + \kappa^{-1} \,,\, \theta_k = 1/\gamma \,,\, \nonumber \\
\eta_{x,k} &=(\sqrt{\kappa_x-1} + 2\kappa_{xy})^{-1}/\mu_x
\,,\, \nonumber \\
\eta_{y,k} &= (2\kappa_{xy} + \sqrt{\kappa_y-1})^{-1}/\mu_y
\,,\, \\
\eta_{u,k} &=(\sqrt{\kappa_x-1})^{-1}, \; \eta_{v,k}= (\sqrt{\kappa_y-1})^{-1} \,.\, \nonumber
\end{align}
Then for any $K > 0$, output of Algorithm \ref{algo:lpd} satisfies
\begin{align*}
&\epsilon (x^K,y^K)\\
&\leq \exp(- \frac{(K-1)}{(\kappa+1)}) \Big( \Big(\frac{1}{\eta_{x,0}}+\frac{L_x-\mu_x}{\eta_{u,0}}\Big) \|x^*-x_{0}\|^2 \;+ \nonumber \\
&\Big(\frac{1}{\eta_{y,0}}+\frac{L_y-\mu_y}{\eta_{v,0}}\Big) \|y^*-y_{0}\|^2\Big)\,.
\end{align*}
\end{theorem}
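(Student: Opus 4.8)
The plan is to recognize the LPD recursion~\eqref{eq:scsc_lpd_rule} as a \emph{block} primal--dual update of type~\eqref{eq:pd_rule} on the lifted saddle problem~\eqref{eq:scsc_reform_problem}, and then replay the proximal-point energy argument of Lemma~\ref{lem:ppm_rule} and Theorem~\ref{thm:pd_rule} in the enlarged space. Grouping the blocks into a primal pair $p=(x,v)$ and a dual pair $d=(y,u)$, the objective~\eqref{eq:scsc_reform_problem_obj} is exactly of the form $F(p)+\Ip{d}{\mathcal{A}p}-H(d)$ of~\eqref{eq:scsc_prox_problem}, with $\mathcal{A}(x,v)=(Ax-v,\,x)$, $F(x,v)=\tfrac{\mu_x}{2}\|x\|^2+\uh^*(v)$ over $x\in\cX$, and $H(y,u)=\tfrac{\mu_y}{2}\|y\|^2+\uf^*(u)$ over $y\in\cY$; the block metrics are Euclidean on $x,y$ and the Bregman divergences $V^{\uf^*},V^{\uh^*}$ on $u,v$. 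With respect to these, $x$ and $y$ are $\mu_x$- and $\mu_y$-strongly convex, $u$ and $v$ are $1$-relatively strongly convex (their objective term is their own d.g.f.), and since $\uf,\uh$ are $(L_x-\mu_x)$- and $(L_y-\mu_y)$-smooth and convex (Section~\ref{sec:32}) the conjugates $\uf^*,\uh^*$ are $\tfrac{1}{L_x-\mu_x}$- and $\tfrac{1}{L_y-\mu_y}$-strongly convex, so $V^{\uf^*},V^{\uh^*}$ dominate the correspondingly scaled Euclidean divergences. By Lemma~\ref{lem:scsc_lpd_update} it suffices to analyze the recursion~\eqref{eq:scsc_lpd_rule} itself.

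I would then derive the one-step energy inequality exactly as in the proof of Lemma~\ref{lem:ppm_rule}, with extrapolation handled as in Theorem~\ref{thm:pd_rule}: write the optimality conditions of the four proximal sub-updates, compare each against the saddle point $(x^*,y^*,u^*,v^*)$ of~\eqref{eq:scsc_reform_problem} (where $u^*=\nabla\uf(x^*)$, $v^*=\nabla\uh(y^*)$ by the optimality conditions together with Fenchel duality, Lemma~\ref{lem:fenchel_props}), invoke strong / relative-strong convexity of each sub-objective, and telescope. This yields a linear contraction $\mathcal{L}_{k+1}\le\theta_k\,\mathcal{L}_k$ of the lifted Lyapunov function
\[
\mathcal{L}_k=\tfrac{1}{2\eta_x}\|x^*-x_k\|^2+\tfrac{1}{2\eta_y}\|y^*-y_k\|^2+\tfrac{1}{\eta_u}V^{\uf^*}_{u_k}(u^*)+\tfrac{1}{\eta_v}V^{\uh^*}_{v_k}(v^*),
\]
once the extrapolated bilinear residuals $\Ip{\mathcal{A}(\cdot)}{\cdot}$ are absorbed by the proximal terms. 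The latter requires balancing the step sizes against each coupling separately: splitting $1/\eta_x=2\kappa_{xy}\mu_x+\sqrt{\mu_x(L_x-\mu_x)}$ so the first piece controls the $A$-coupling to $y$ and the second the $\Id$-coupling to $u$ (and symmetrically $1/\eta_y=2\kappa_{xy}\mu_y+\sqrt{\mu_y(L_y-\mu_y)}$), one checks $\eta_x\eta_y\|A\|^2\le\tfrac14$, $\eta_x\eta_u(L_x-\mu_x)\le1$, and $\eta_y\eta_v(L_y-\mu_y)\le1$ for the stated step sizes; these are precisely the multi-block analogues of the condition $\sqrt{\mu_x/\mu_y}\eta_x=\sqrt{\mu_y/\mu_x}\eta_y=1/(2\|A\|)$ in Theorem~\ref{thm:pd_rule} and of the step sizes in Corollary~\ref{cor:sc_agd}.

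For the rate, $\theta_k=1/\gamma$ with $\gamma=1+\kappa^{-1}$, and since $\min(\mu_x\eta_x,\mu_y\eta_y,\eta_u,\eta_v)=\min\bigl(\tfrac{1}{\sqrt{\kappa_x-1}+2\kappa_{xy}},\,\tfrac{1}{2\kappa_{xy}+\sqrt{\kappa_y-1}}\bigr)\ge\kappa^{-1}$, the energy argument (as in Lemma~\ref{lem:ppm_rule}, up to the same universal constant and the $K\mapsto K-1$ shift coming from the warm start $u_{-1}=u_0,\ v_{-1}=v_0$ and $x_{-1}=x_0,\ y_{-1}=y_0$) gives $\mathcal{L}_K\le\exp(-(K-1)/(\kappa+1))\,\mathcal{L}_0$. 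Finally I would convert the endpoints. On the left, $1/\eta_{x,k}=(\sqrt{\kappa_x-1}+2\kappa_{xy})\mu_x\ge2\kappa_{xy}\mu_x$ and likewise $1/\eta_{y,k}\ge2\kappa_{xy}\mu_y$, so dropping the nonnegative $u,v$ terms gives $\mathcal{L}_K\ge\epsilon(x_K,y_K)$. On the right, $u_0=\nabla\uf(\ux_0)=\nabla\uf(x_0)$ while $u^*=\nabla\uf(x^*)$, so by the standard Bregman--conjugate identity $V^{\uf^*}_{\nabla\uf(x_0)}(\nabla\uf(x^*))=V^{\uf}_{x^*}(x_0)\le\tfrac{L_x-\mu_x}{2}\|x_0-x^*\|^2$ and symmetrically for $v$; hence $\tfrac{1}{\eta_{u,0}}V^{\uf^*}_{u_0}(u^*)\le\tfrac{L_x-\mu_x}{2\eta_{u,0}}\|x_0-x^*\|^2$, and collecting terms yields the stated bound on $\epsilon(x_K,y_K)$.

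The main obstacle is the second step: carrying out the four-block proximal-point energy argument with \emph{heterogeneous} metrics, and in particular making the single $x$-block proximal term absorb two couplings simultaneously -- to $y$ through $A$ and to $u$ through $\Id$. This is exactly what forces the additive split $\sqrt{\kappa_x-1}+2\kappa_{xy}$ inside $1/\eta_x$ (resp.\ $2\kappa_{xy}+\sqrt{\kappa_y-1}$ inside $1/\eta_y$), and, through the $\min$ over blocks governing the contraction factor, the ``sum'' form of the meta-condition number $\kappa=\sqrt{\kappa_x-1}+2\kappa_{xy}+\sqrt{\kappa_y-1}$; the remaining bookkeeping parallels Lemma~\ref{lem:ppm_rule}, Theorem~\ref{thm:pd_rule}, and Corollary~\ref{cor:sc_agd}.
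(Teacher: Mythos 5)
Your proposal is correct and follows essentially the same route as the paper: the paper also analyzes the lifted four-block update \eqref{eq:scsc_lpd_rule} by applying the mirror-descent lemma to each proximal sub-step, absorbing the extrapolation residuals via per-coupling Cauchy--Schwarz terms (which is exactly your additive split of $1/\eta_x$ and $1/\eta_y$), telescoping with geometric weights $\gamma^k$, $\gamma=1+\kappa^{-1}$, and converting the $u,v$ terms with $u^*=\nabla\uf(x^*)$, $u_0=\nabla\uf(x_0)$ and the identity $V^{\uf^*}_{u_0}(u^*)=V^{\uf}_{x^*}(x_0)\le\tfrac{L_x-\mu_x}{2}\|x^*-x_0\|^2$ (Lemma~\ref{lem:dual_bregman_div}). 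The only difference is organizational: the paper first proves a general-parameter result (Theorem~\ref{thm:smooth_lpd_primal_dual}, with free $\lambda_k,\alpha_{x,k},\alpha_{y,k},\alpha_{u,k},\alpha_{v,k}$) and then verifies your stepsize conditions in Corollary~\ref{cor:smooth_scsc_primal_dual}, and the contraction holds only for the telescoped sum (boundary residuals are absorbed at step $K$ at the cost of constants) rather than as a literal per-step inequality $\mathcal{L}_{k+1}\le\theta_k\mathcal{L}_k$, which you already acknowledge.
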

Note that the parameter choices in the above theorem are iteration ($k$) invariant.
The gradient complexity of Algorithm \ref{algo:lpd} is
\begin{equation}\label{eq:scsc_upperbound}
\Ord\Big(\Big(\sqrt{\frac{L_x}{\mu_x}-1} + \frac{\|A\|}{\sqrt{\mu_x\mu_y}}+ \sqrt{\frac{L_y}{\mu_y}-1}\Big) \log\Big(\frac{1}{\varepsilon}\Big)\Big),
\end{equation}
which is optimal and matches the lower-bound \cite{zhang2019lower} for Bi-SC-SC problem \eqref{eq:scsc_problem} up to logarithmic factors in the problem parameters. 
The lifting of the objective function allows the  PD method to be jointly applied to the smooth convex terms (as illustrated in Section~\ref{sec:32}) and to the bilinear minimax terms (as illustrated in Section~\ref{sec:31}), achieving this optimal rate \cite{zhang2019lower}. Comparisons to other  algorithms are given in Table~\ref{tab:complexity}.

We emphasize that LPD inherits the 
 computational and conceptual simplicity of the PD methods. %
The former leads to a single-loop algorithm, which is significantly simpler than other state-of-the-art complex multi-loop methods with sub-optimal guarantees  \cite{lin2020near,wang2020improved,xie2021dippa}.
The latter leads to a more transparent analysis, based on the simple analysis of the PD methods (Theorem \ref{thm:pd_rule}), 
which is based on an even simpler analysis of PPM (Lemma \ref{lem:ppm_rule}).

Note that we do not directly adapt the original guarantee of the PD method \cite{chambolle2016ergodic}. Our analysis has to be different since the naive application of the existing algorithm and analysis will depend on an effective strong convexity parameter (in $(x,v)$) of $\min(\mu_x, 1/(L_y - \mu_y))$, an effective strong concavity parameter (in $(y,u)$) of $\min(\mu_y, 1/(L_x - \mu_x))$, and a Lipschitz constant which is equal to the largest eigenvalue of the matrix effective coupling matrix
$[\mathbf{I}, A ; 0, \mathbf{I}]$.
This leads to a sub-optimal guarantee. Hence, we propose a different approach with a tighter analysis to achieve the optimal rates.

\begin{figure*}
	\begin{subfigure}{.25\textwidth}
	\centering
	\includegraphics[width=0.95\textwidth]{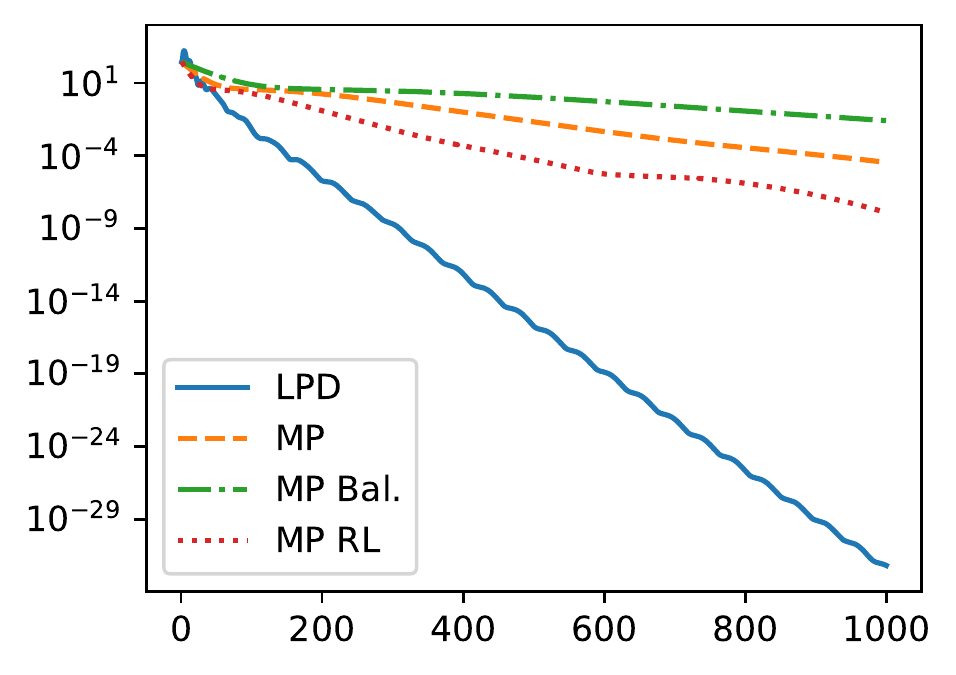}
	\put(-80,-5){\scalebox{.6}{{Number of Iterations {($K$)}}}}
	\put(-120, 20){\scalebox{.6}{\rotatebox{90}{{Primal-Dual gap}}}}%
	\caption{SC-SC Quadratic}	
	\label{fig:synthetic_scsc}
    \end{subfigure}%
    \hfill
	\begin{subfigure}{.25\textwidth}
	\centering
	\includegraphics[width=0.95\textwidth]{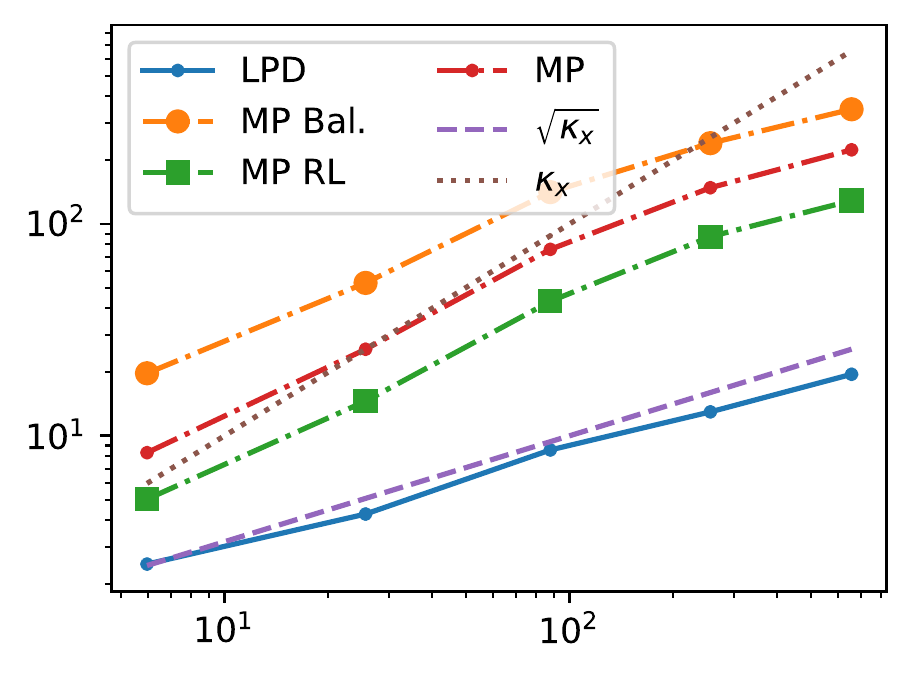}
	\put(-80,-5){\scalebox{.6}{{Condition number {($\kappa_x$)}}}}
	\put(-122, 25){\scalebox{.6}{\rotatebox{90}{{$K/\log(\Delta_0^2/\Delta_K^2)$}}}}%
	\vspace{-3pt}
	\caption{SC-SC Quadratic}
	\label{fig:synthetic_kappa}
    \end{subfigure}%
    \hfill
	\begin{subfigure}{.25\textwidth}
	\centering
	\includegraphics[width=0.95\textwidth]{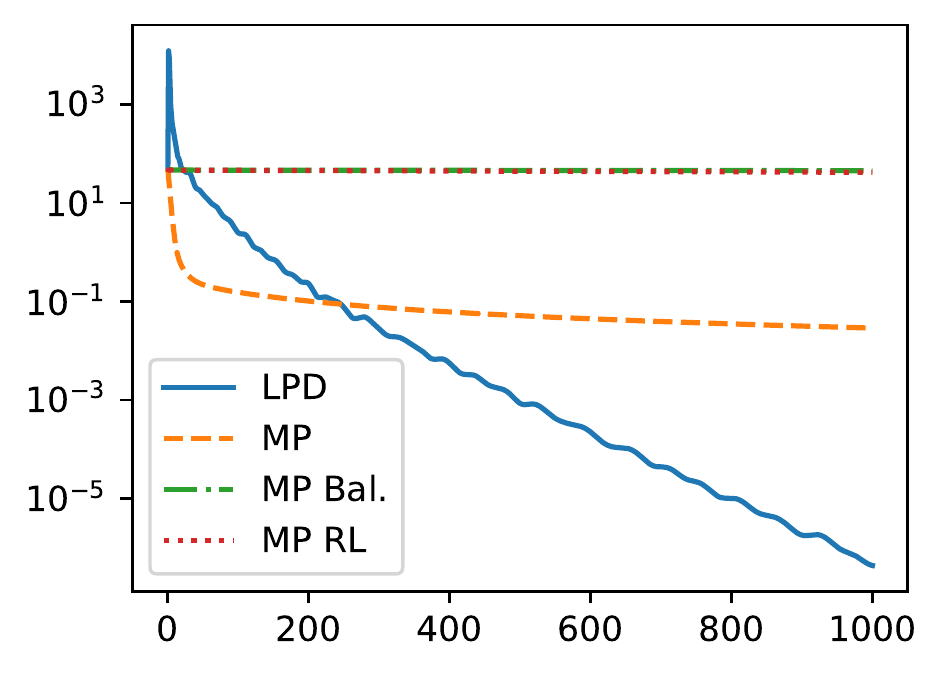}
	\put(-80,-5){\scalebox{.6}{{Number of Iterations {($K$)}}}}
	\put(-120, 20){\scalebox{.6}{\rotatebox{90}{{Primal-Dual gap}}}}%
	\caption{SC-SC Policy Evaluation}	
	\label{fig:rl_scsc}
    \end{subfigure}%
    \hfill
	\begin{subfigure}{.25\textwidth}
	\centering
	\includegraphics[width=0.95\textwidth]{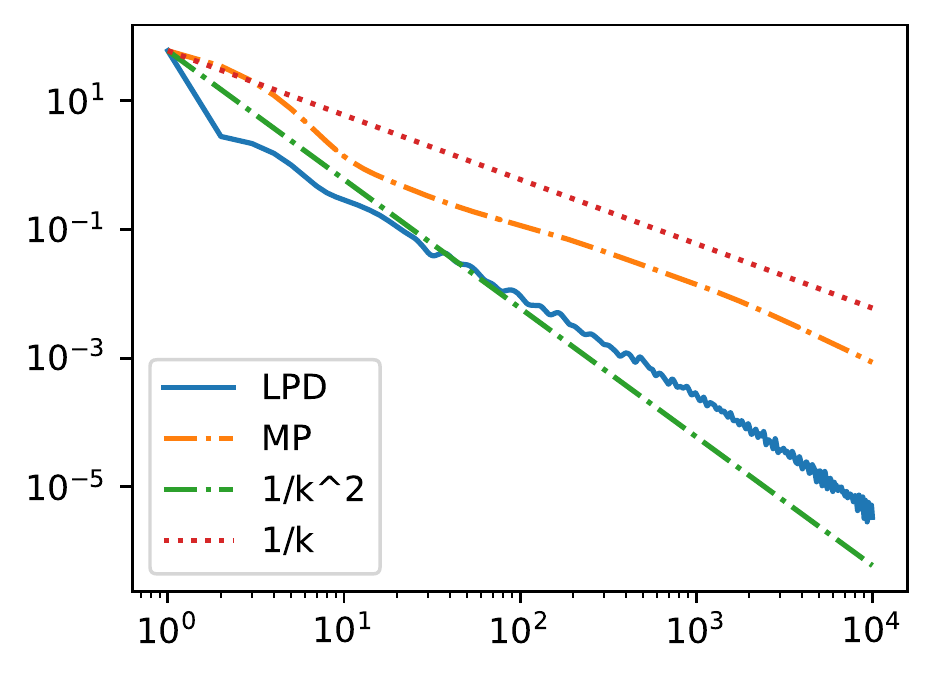}
	\put(-80,-5){\scalebox{.6}{{Number of Iterations {($K$)}}}}
	\put(-120, 20){\scalebox{.6}{\rotatebox{90}{{Primal-Dual gap}}}}%
	\caption{SC-C Policy Evaluation}	
	\label{fig:rl_csc}
    \end{subfigure}%
	\caption{LPD method (ours) achieves a faster linear convergence rate than competing algorithms in Strongly-Convex--Strongly-Concave synthetic quadratic minimax (a-b) and policy evaluation (c) problems. LPD method (ours) also achieves a faster $O(1/K^2)$ convergence rate than competing single-loop algorithm in Convex--Strongly-Concave policy evaluation problem (d).
	}    
\end{figure*}
\textbf{Convex--Strongly-Concave Case.} 
Consider the Bilinearly-coupled Convex--Strongly-Concave (Bi-C-SC) case, where $f$ is merely convex, i.e.~$\mu_x = 0$. 
\begin{remark}[LPD + Smoothing \cite{nesterov2005smooth}] \label{rem:smooth_scsc_primal_dual_plus_smoothing}
Let $\phi(x,y)$ be the objective of a Bi-C-SC problem. Then we can apply LPD for Bi-SC-SC problems (Theorem \ref{thm:smooth_scsc_primal_dual_informal})
to the {\em smoothed} Bi-SC-SC objective $\phi(x,y) + \lambda \varepsilon \|x\|^2$ for some $\lambda > 0$,
and achieve an iteration complexity of
$\Ord( \sqrt{L_x/\varepsilon} + \|A\|/\sqrt{\mu_y \varepsilon} + \sqrt{L_y/\mu_y} )  \log\big(1/\varepsilon )$ for solving the original Bi-C-SC problem.
\end{remark}
The above result is optimal up to logarithmic factors.
The first term cannot be improved even for a pure minimization of convex $f$  \cite{nesterov2018lectures}. Due to a lower-bound of $\Omega(\|A\|/\sqrt{\mu_y \varepsilon})$ for the same problem when $f=0$ \cite{ouyang2021lower}, the second term cannot be improved. 
The third term cannot be improved even for a pure maximization of strongly-concave $h$ \cite{nesterov2018lectures}. 

However, smoothing might not be desirable in practice, because it requires bounded domains and fixing the final target error $\varepsilon$ in advance, and it is hard to tune $\lambda$ \cite{nesterov2005smooth}. We therefore design a direct algorithm by 
customizing the stepsizes of LPD. 
Let $D_\cX = \max_{x \in \cX \cap \domain(f)} \|x-x_{0}\|$ and $D_\cY = \max_{y \in \cY \cap \domain(h)} \|y-y_{0}\|$. Note that  the min variable solution $x^*$ may not be unique.

\begin{theorem}[Informal version of Corollary \ref{cor:smooth_csc_primal_dual}]
\label{thm:smooth_csc_primal_dual_informal}
Let
\begin{align}
1/{\eta_{x,k}} &= 1/{(k+1)\eta_x}\,,\,
1/{\eta_x} = 2L_x + {16\|A\|}/{\mu_y}
\,,\,
\nonumber \\
1/{\eta_{y,k}} &= 1/{(k+1)\eta_y} + {k\mu_y}/2\,,\,
1/{\eta_y} = 2(L_y-\mu_y)
\,,\,
\nonumber \\
\eta_{u,k} &= \eta_{v,k} = 2/{k}\,, \text{ for all } k \geq 0\,.
\end{align}
Then for any $K > 0$, output of Algorithm \ref{algo:lpd} satisfies \\
\noindent
$(a)$ if $D_\cX < \infty$ and $D_\cY < \infty$,
\begin{align}
&\max_{y \in \cY} \phi(\overline{x}_{K}, {y}) - \min_{x \in \cX} \phi({x}, \overline{y}_{K}) 
\leq \nonumber \\
&\frac{2 L_x D_\cX^2 }{K(K+1)} + \frac{16\|A\|^2 D_\cX^2 }{\mu_y K(K+1)}+
\frac{2(L_y-\mu_y)D_\cY^2}{K(K+1)}
\end{align}
where $(\overline{x}_K, \overline{y}_K) :=  {\sum_{k=1}^{K} \frac{2k}{K(K+1)} ( x_k, y_k)}$,
\\
\noindent
$(b)$ even if the feasible set is unbounded,
\begin{align}
\frac{\mu_y}{4}&\|y^*-y_{K}\|^2 
\leq \frac{2 L_x \|x^*-x_{0}\|^2}{K(K+1)}+ \nonumber \\
&\frac{16\|A\|^2 \|x^*-x_{0}\|^2}{\mu_y K(K+1)} +
\frac{2(L_y-\mu_y)\|y^*-y_{0}\|^2}{K(K+1)}
\end{align}
$(c)$ if $D_\cX < \infty$, $\phi_p(x)$$\,=\,$$\max_{y \in \cY} \phi(x, y)$, $\phi_d(x)$$\,=\,$$\min_{x \in \cX} \phi(x, y)$, and we do a warm restart on variable $y$ with $K_0 = \Omega_{\varepsilon}(1)$ initial additional iterations, then
\begin{align*}
&\phi_p(\overline{x}_{K}) - \phi_p(x^*)
\leq (L_x + \frac{10L_y\|A\|^2 }{\mu_y^2}) \frac{4 \|x^* - x_0\|^2}{K(K+1)} \,, \text{ and}
\nonumber \\
&\phi_d(y^*) - \phi_d(\overline{y}_{K})
\leq (L_x + \frac{8\|A\|^2 }{\mu_y}) \frac{4 D_\cX^2 }{K(K+1)}\,.
\end{align*}
\end{theorem}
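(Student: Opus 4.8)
All three parts will be derived from a single weighted primal--dual inequality for the lifted iterates~\eqref{eq:scsc_lpd_rule}, then specialized with different test points and post-processed. By (the time-varying-stepsize version of) Lemma~\ref{lem:scsc_lpd_update}, the $(x_k,y_k)$ generated by Algorithm~\ref{algo:lpd} coincide with those of~\eqref{eq:scsc_lpd_rule} on the lifting~\eqref{eq:scsc_reform_problem}, where with $\mu_x=0$ we have $\uf=f$ ($L_x$-smooth convex, so $\uf^*=f^*$ is $\tfrac1{L_x}$-strongly convex) and $\uh=h-\tfrac{\mu_y}2\|\cdot\|^2$ ($(L_y-\mu_y)$-smooth convex, so $\uh^*$ is $\tfrac1{L_y-\mu_y}$-strongly convex), and $\Phi$ is jointly convex in $(x,v)$ and jointly concave in $(y,u)$. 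Since $\phi(x,y)=\min_v\max_u\Phi(x,y;u,v)$ (Lemma~\ref{lem:fenchel_props}(c)) with the $u$- and $v$-pieces separable, exchanging the inner $\min_v/\max_u$ gives, for any feasible $(\overline{x},\overline{y})$ and any $\overline{u},\overline{v}$,
\[
\max_{y\in\cY}\phi(\overline{x},y)-\min_{x\in\cX}\phi(x,\overline{y})\ \le\ \max_{y\in\cY,\,u}\Phi(\overline{x},y;u,\overline{v})\;-\;\min_{x\in\cX,\,v}\Phi(x,\overline{y};\overline{u},v),
\]
so it suffices to control the lifted duality gap at the point $(\overline{x},\overline{v};\overline{y},\overline{u})$; on the right the $u$-, $y$-, $v$-extrema are finite by coercivity (strong convexity of $f^*$, $\tfrac{\mu_y}2\|\cdot\|^2$, $\uh^*$), and in part~(a) the $x$-minimum is finite since $D_\cX<\infty$.

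\textbf{The key descent inequality.} The heart of the proof is a one-iteration estimate for~\eqref{eq:scsc_lpd_rule} with the stated time-varying stepsizes. For each of the four blocks I would use the first-order optimality condition of its (Bregman-)prox subproblem together with the subproblem's strong convexity: for $x$ and $y$ the usual three-point identity for the Euclidean prox (the $y$-subproblem is $(1/\eta_{y,k}+\mu_y)$-strongly convex, which produces extra $\tfrac{\mu_y}2\|y-y_{k+1}\|^2$ terms), and for $u,v$ the three-point identity for the Bregman divergences generated by $\uf^*,\uh^*$ themselves, i.e.\ the ``unit relative strong convexity'' already exploited in Section~\ref{sec:31}; together with the initialization $u_0=\nabla\uf(\ux_0)$, $v_0=\nabla\uh(\uy_0)$, $\ux_0=x_0$, $\uy_0=y_0$ and Bregman conjugacy, the conjugate-variable contributions are controlled by $\tfrac{L_x}2\|x^*-x_0\|^2$ and $\tfrac{L_y-\mu_y}2\|y^*-y_0\|^2$. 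Summing the four blocks, the extrapolations $\tx_{k+1},\ty_{k+1},\tnabla_{x,k+1},\tnabla_{y,k+1}$ turn the bilinear cross-terms into increments $\langle y_{k+1}-y_k,\,A(x_{k+1}-x_k)\rangle$ plus telescoping remainders, which I absorb by Young's inequality against the $\tfrac1{\eta_{x,k}}\|x-x_{k+1}\|^2$ and $\mu_y\|y-y_{k+1}\|^2$ budgets --- this is exactly why $1/\eta_x$ must carry the $\|A\|/\mu_y$ term and why the $\|A\|^2/\mu_y$ factor surfaces in the bound. With $1/\eta_{x,k}=\tfrac1{(k+1)\eta_x}$, $1/\eta_{y,k}=\tfrac1{(k+1)\eta_y}+\tfrac{k\mu_y}2$, $\eta_{u,k}=\eta_{v,k}=2/k$ (the $k=0$ dual step being exact), the matching $\theta_k$, and weights $\omega_k=k$, all these pieces telescope into an inequality of the form: for every $(x,y)\in\cX\times\cY$ and every $u,v$, the weighted lifted gap $\sum_{k=1}^K\omega_k\big(\Phi(x_k,y;u,v_k)-\Phi(x,y_k;u_k,v)\big)$ plus a nonnegative terminal term $R_K\ge\tfrac14\mu_y K(K+1)\|y-y_K\|^2$ is at most $C\big((L_x+\tfrac{\|A\|^2}{\mu_y})\|x-x_0\|^2+(L_y-\mu_y)\|y-y_0\|^2\big)+L_x\|x^*-x_0\|^2+(L_y-\mu_y)\|y^*-y_0\|^2$ for an absolute constant $C$, where the $\tfrac{k\mu_y}2$ part of $1/\eta_{y,k}$ is precisely what produces the $R_K$ term after telescoping against the strong-concavity terms. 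I would isolate this estimate as a lemma proved in the appendix.

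\textbf{Deducing parts (a) and (b).} Dividing by $\sum_k\omega_k=\tfrac{K(K+1)}2$ and using convexity--concavity of $\Phi$ with Jensen bounds the lifted gap at the $\omega$-averaged iterate $(\overline{x}_K,\overline{y}_K,\overline{u}_K,\overline{v}_K)$; combining with the reduction display of the first paragraph, taking $\sup$ over $(x,y)$ (controlled by $D_\cX,D_\cY$) and over $(u,v)$ (finite, and estimated via the conjugacy bounds at $\nabla f(x^*),\nabla\uh(y^*)$ when needed, noting $\eta_{u,0}=\eta_{v,0}=\infty$ kills any $u,v$-growing term), and inserting $1/\eta_x=2L_x+16\|A\|/\mu_y$, $1/\eta_y=2(L_y-\mu_y)$, yields part~(a). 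For part~(b) I take instead $(x,y,u,v)=(x^*,y^*,\nabla f(x^*),\nabla\uh(y^*))$, the saddle point of $\Phi$: every summand on the left is $\ge0$ by the saddle property, so $R_K$ is at most the right-hand side, and retaining only the $\tfrac14\mu_y K(K+1)\|y^*-y_K\|^2$ piece of $R_K$ and dividing gives the claimed bound on $\tfrac{\mu_y}4\|y^*-y_K\|^2$, with no boundedness assumption on $\cX$ or $\cY$.

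\textbf{Deducing part (c), and the main obstacle.} For part~(c), running $K_0$ warm-up iterations on $y$ and invoking part~(b) forces $\|y^*-y_{K_0}\|^2$ below a fixed multiple of $\tfrac{\mu_y}{L_y-\mu_y}\|x^*-x_0\|^2$ once $K_0=\Omega_\varepsilon(1)$ is a large enough constant depending only on the condition numbers, so after restarting from $(x_{K_0},y_{K_0})$ the $\|y^*-y_0\|^2$-term in the descent inequality is dominated by the $\|x^*-x_0\|^2$-terms. To convert the resulting distance/gap bounds into the one-sided estimates I use that $\phi_p(x)=f(x)+\max_{y\in\cY}[\langle y,Ax\rangle-h(y)]$ is convex and $(L_x+\|A\|^2/\mu_y)$-smooth with $\tfrac{\|A\|}{\mu_y}$-Lipschitz inner argmax $y^*(\cdot)$: writing $\phi_p(\overline{x}_K)-\phi_p(x^*)\le\big[\phi_p(\overline{x}_K)-\phi(\overline{x}_K,\overline{y}_K)\big]+\big[\phi(\overline{x}_K,\overline{y}_K)-\phi(x^*,\overline{y}_K)\big]$, the first bracket is $\le\tfrac{\|A\|^2}{2\mu_y}\big(\|\overline{y}_K-y^*\|+\tfrac{\|A\|}{\mu_y}\|\overline{x}_K-x^*\|\big)^2$ and the second is bounded by $L_x$-smoothness of $f$ plus a first-order term absorbed by the descent inequality; since that inequality specialized to the saddle point makes both $\|\overline{x}_K-x^*\|^2$ and $\|\overline{y}_K-y^*\|^2$ of order $\|x^*-x_0\|^2/K^2$, collecting constants produces the $L_x+10L_y\|A\|^2/\mu_y^2$ coefficient (the extra $L_y/\mu_y$-type factor arising precisely from translating the $y$-tracking error into a primal-value error), while $\phi_d(y^*)-\phi_d(\overline{y}_K)$ is bounded symmetrically and more cheaply using only $D_\cX$ and the smoothness of $\phi_d$. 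The main obstacle is the descent inequality itself: making the four-block, time-varying-stepsize primal--dual recursion telescope under exactly these stepsizes --- simultaneously absorbing the bilinear cross-terms into the $1/\eta_{x,k}$ and $\mu_y$ budgets, telescoping the decreasing-$\eta_x$/increasing-weight and $\tfrac{k\mu_y}2$ terms, and keeping the Bregman conjugate-variable terms under control --- so that the $O(1/K^2)$ rate emerges with the stated constants; a secondary difficulty is the last step of~(c), since a crude bound would merely reproduce the two-sided gap of~(a).
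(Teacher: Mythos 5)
Your reduction to the lifted gap, your key weighted descent inequality, and your treatment of parts (a) and (b) track the paper's own route almost exactly: the paper proves precisely such a time-varying-stepsize, weighted primal--dual estimate as Theorem~\ref{thm:smooth_lpd_primal_dual} (per-block mirror-descent inequalities, Young's inequality on the extrapolated cross terms with parameters $\alpha_{x,k},\alpha_{y,k},\alpha_{u,k},\alpha_{v,k}$, telescoping with $\lambda_k=k+1$), then specializes it in Corollary~\ref{cor:smooth_csc_primal_dual}; part (a) follows by Jensen at the averaged iterates together with the choice $u=\nabla f(\overline{x}_K)$, $v=\nabla\uh(\overline{y}_K)$ (equivalent to your exchange of the inner $\min_v/\max_u$), and part (b) by testing at $(x^*,y^*,\nabla f(x^*),\nabla\uh(y^*))$ and keeping the terminal $(K+1)^2\mu_y/8\,\|y^*-y_K\|^2$ term. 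Your observation that $1/\eta_{u,0}=1/\eta_{v,0}=0$ removes the conjugate-variable initial terms is exactly what makes the C-SC specialization work there, so for the lemma and parts (a)--(b) your plan is sound and essentially identical to the paper's.

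The genuine gap is in your part (c) bound on $\phi_p$. You split $\phi_p(\overline{x}_{K})-\phi_p(x^*)\le[\phi_p(\overline{x}_{K})-\phi(\overline{x}_{K},\overline{y}_{K})]+[\phi(\overline{x}_{K},\overline{y}_{K})-\phi(x^*,\overline{y}_{K})]$ and then invoke ``$\|\overline{x}_K-x^*\|^2$ and $\|\overline{y}_K-y^*\|^2$ of order $\|x^*-x_0\|^2/K^2$''. With $\mu_x=0$ no such decay of the $x$-distance is available: the terminal $x$-coefficient in the descent inequality is $\lambda_K\|A\|\alpha_{x,K+1}/2\approx 2\|A\|^2/\mu_y$, which does not grow with $K$, the statement itself notes $x^*$ need not be unique, and merely convex problems give $\Ord(1/K^2)$ decay in function value, not in distance; moreover averaging part (b) over iterations only yields $\|\overline{y}_K-y^*\|^2=\Ord(\log K/K^2)$. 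Since your first bracket carries no $1/K^2$ prefactor of its own, it genuinely needs this (unavailable) decay, so the step fails. The paper instead writes $\phi_p(\overline{x}_{K})-\phi_p(x^*)\le\phi(\overline{x}_{K},\hy(\overline{x}_{K}))-\phi(x^*,\overline{y}_{K})$ and evaluates the already-established averaged gap bound \eqref{eq:smooth_csc_primal_dual_cor_eq4} at the test point $(x^*,\hy(\overline{x}_{K}))$; the argmax-tracking error then enters only through $\|\hy(\overline{x}_{K})-y_{0}\|^2$, which is already multiplied by the decaying factor $2(L_y-\mu_y)/(K(K+1))$, so a bounded (non-decaying) control of $\|\overline{x}_K-x^*\|$ together with the warm restart justified by \eqref{eq:smooth_csc_primal_dual_cor_eq5} for $\|y^*-y_{0}\|^2$ suffices. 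Restructure (c) along those lines (your $\phi_d$ argument, which uses only $D_\cX$, is fine); as proposed, the $\phi_p$ estimate does not go through.
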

This implies that, for Bi-C-SC problem, LPD has a gradient complexity of 
\begin{equation}\label{eq:csc_upperbound}
\Ord\Big(\Big(\sqrt{\frac{L_x}{\varepsilon}} + \frac{\|A\|}{\sqrt{\mu_y\varepsilon}}+ \sqrt{\frac{L_y-\mu_y}{\varepsilon}}\Big) \Big)\,.
\end{equation}

The LPD method achieves better complexities than previous single-loop algorithms \cite{mokhtari2020unified,nesterov2006solving}, PDHG-type algorithm \cite{zhao2019optimal}, direct multi-loop algorithm \cite{thekumparampil2019efficient}, and some smoothing-based multi-loop algorithms \cite{wang2020improved,lin2020near,xie2021dippa} (see Table \ref{tab:complexity}). 
Earlier single-loop methods such as \cite{chambolle2016ergodic,hamedani2021primal} achieve $\Ord(1/K^2)$ rate only under the restriction that $L_x$$\,=\,$$0$. This showcases the generality and simplicity of our LPD method, as it is the first single-loop algorithm which achieves $\Ord(1/K^2)$ for this problem. To the best of our knowledge, it is not known if better rates than in above theorem are achievable with a single-loop algorithm without using the smoothing technique, like in Lifted PD + Smoothing (Remark \ref{rem:smooth_scsc_primal_dual_plus_smoothing}). As discussed after Remark \ref{rem:smooth_scsc_primal_dual_plus_smoothing}, in practice, direct algorithms such as the one above are more desirable than smoothing-based algorithms.

\paragraph{Prox-friendly terms:} We point out that LPD can be extended to solve more general (possibly nonsmooth) minmax problems with the same guarantees:
\begin{align}\label{eq:scsc_general_problem}
\min_{x \in \cX} \max_{y \in \cY} \; F(x) + f(x) + \Ip{y}{Ax} - h(y) - H(y),
\end{align}
where $F$ and $H$ are convex and we have access to their proximal operators and $f,h$ satisfy our Assumption \ref{assume:ldp_main_assumptions}. We give the details of extension in Appendix \ref{sec:lpd_extension}.

\section{Experimental Results}
\label{sec:expts}
In this section we compare our LPD method with some competing single-loop non-smoothing-based direct algorithms when solving both synthetic and real-world problems. More details of the experiments are provided in Appendix \ref{sec:expt_details}.
First, we compare our LPD method with Mirror Prox (MP) \cite{mokhtari2020unified}, Balanced Mirror Prox (MP Bal.) (see Appendix \ref{sec:expt_details}), and Relative Lipschitzness-based Mirror Prox (MP RL) \cite{cohen2021relative} when solving Bi-SC-SC problems.  %
We only compared our (single-loop) algorithm with other single-loop algorithms, because multi-loop algorithms such as  \cite{wang2020improved} and \cite{xie2021dippa}  are typically challenging to implement and tune.  To the best of our knowledge, there are no publicly available implementations for these algorithms.

\textbf{Quadratic Problem:}
First, we consider synthetic quadratic problems of the form \eqref{eq:quadratic_problem}. We randomly generate the matrices $B$, $A$, $C$ in such a way that $\kappa_x = L_x/\mu_x = \kappa_y = L_y/\mu_y$ and $\kappa_{xy} = \|A\|/\sqrt{\mu_x \mu_y} := \sqrt{\kappa_x}$. In Figure \ref{fig:synthetic_scsc}, we plot the primal-dual gap against the number of iterations ($K$) of different algorithms when solving  such a problem with $\kappa_x = 256.0$. We see what LPD achieves a faster linear convergence than other methods. In Figure \ref{fig:synthetic_kappa}, we plot $K/\log(\Delta_0^2/\Delta_K^2)$ against $\kappa_x$ where $\Delta_K = \|x_K - x^*\|^2 + \|y_K - y^*\|^2$. We vary $\kappa_x$ from $5.96$ to $656.84$. %
As expected from theory, in this log-log scale plot, slope of the LPD curve is close to $1/2$ since $\Delta_K^2 \leq \Ord(\exp(-K/\sqrt{\kappa_x}))$ for LPD, and slope of other algorithms are close to one since $\Delta_K^2 \leq \Ord(\exp(-K/\kappa_x))$ for other algorithms.

\textbf{Policy Evaluation:}
Next, we consider policy evaluation problems of the form \eqref{eq:policy_eval_problem}. We consider the same MountainCar \cite{sutton2018reinforcement} reinforcement learning problem used in \cite{du2017stochastic}, and use the same copy of policy trace $\{(s_t,a_t,s_{t+1},r_t)\}_{t=1}^n$ used by \cite{du2017stochastic} to construct the MSPBE minimization problem. We create the feature vectors $\phi_t$, by applying PCA to the state vectors $s_t$ to whiten them. This reduces their dimension from $300$ to $200$. Finally setting $\rho=1.0$, results in a highly ill-conditioned Bi-SC-SC problem with $\kappa_x$$\,=\,$$1.0$, $\kappa_{xy}$$\,=\,$$24.35$, and $\kappa_y$$\,=\,$$19387.07$. In Figure \ref{fig:rl_scsc}, we plot the primal-dual gap against the number of iterations ($K$) of different algorithms when solving this problem. We observe that, our LPD method achieves much faster linear convergence than all other algorithm. Note that MP is better than LPD for small $K$, because in this regime the $\Ord(1/K)$ convergence rate of MP dominates its primal-dual gap.

Finally, we compare our LPD method with MP \cite{mokhtari2020unified}, when solving a Bi-SC-C problem.

\textbf{SC-C Policy Evaluation:} We consider the same minimum MSPBE estimation problem as above. However we directly use the $300$ dimensional state vectors $s_t$ as its feature vector $\phi_t$. This results in a Bi-SC-C problem. Note that Bi-SC-C objective is the negative of the objective of a Bi-C-SC problem, which means that we can solve it using LPD with stepsize choice given in Theorem \ref{thm:smooth_scsc_primal_dual_informal}. In Figure \ref{fig:rl_csc}, we plot the primal-dual gap against the number of iterations ($K$) of LPD and MP methods when solving this problem. As theory predicts, we observe that the LPD method achieves a much faster $O(1/K^2)$ convergence rate than $O(1/K)$ convergence rate of MP.

\section{Conclusion}
We studied Bi-SC-SC problem and provided an optimal single-loop algorithm: the Lifted Primal-Dual (LPD) method to solve it. The LPD method is designed using simple building blocks of the Primal-Dual method and \textit{lifting}, leading to its generalizability, simplicity, and transparent analysis. Further, we also provide two related algorithms---one optimal (up to logarithmic factors) and another single-loop---to solve Bi-C-SC problem.

\section*{Acknowledgement} 

This work is supported by Google faculty research award and NSF grants CNS-2002664, IIS-1929955, DMS-2134012, CCF-2019844 as a part of NSF Institute for Foundations of Machine Learning (IFML), and CNS-2112471 as a part of NSF AI Institute for Future Edge Networks and Distributed Intelligence (AI-EDGE). This work was done prior to the first author joining Amazon, and it does not relate to his current position there.

\printbibliography

\onecolumn

\appendix

\section{Definitions and Standard results}
\label{sec:prelim_details}

\subsection{Convexity and Smoothness}

\begin{definition}
We say that a function is $\mu$-strongly convex if
\begin{align}
&f(\alpha x_1 + (1-\alpha)x_2) \leq \alpha f(x_1) + (1-\alpha) f(x_2) - \frac{\mu}2 \alpha (1-\alpha) \|x_1-x_2\|^2 \,,\;\; \text{ for any } \alpha \in [0,1]\,, \nonumber \\
&f(x_2) \geq f(x_1) + \Ip{f'(x_1)}{x_2 - x_2} + \frac{\mu}2 \|x_1-x_2\|^2 \,, \text{ or equivalently } \nonumber \\
&\Ip{f'(x_1)(x_1) - f'(x_1)(x_2)}{x_1 - x_2} \geq \mu \|x_1 - x_2\|^2 \nonumber
\end{align}
for all $x_1$ and $x_2$, where at any point $x$,  $f'(x) \in \partial f(x)$ is some sub-gradient of the function in its (Frechet) sub-differential $\partial f(x)$ at that point. Further we say that a function (merely) convex if it is $0$-strongly convex.
\end{definition}

For a differentiable function $f$, its gradient at any point $x$ is denoted by $\nabla f(x)$.
\begin{definition}
We say that a function is $L$-smooth if it is differentiable and
\begin{align}
f(x_2) \leq f(x_1) + \Ip{\nabla f(x_1)}{x_2 - x_2} + \frac{L}2 \|x_2 - x_1\|^2 \,, \text{ or equivalently } \Ip{\nabla f(x_1) - \nabla f(x_2)}{x_1 - x_2} \leq L \|x_1 - x_2\|^2 \nonumber
\end{align}
for all $x_1$ and $x_2$, where where at any point $x$, $\nabla f(x)$ is gradient of the function at that point $x$. 
\end{definition}

\subsection{Fenchel/Convex Conjugate and Duality}

\begin{definition}
Let $f$ be a convex function. Then its Fenchel/convex conjugate $f^*$ is defined as $f^*(u) := \max_{x} \Ip{u}{x} - f(x)$
\end{definition}

\begin{lemma}[\cite{nesterov2018lectures,kakade2009duality}] \label{lem:fenchel_props}
Fenchel/convex conjugate satisfy the following properties.
\begin{enumerate}
\item[(a)] If $f$ is an $L$-smooth and convex function, then $f^*$ $1/L$-strongly convex.
\item[(b)] If $f$ is an $L$-smooth and convex function, then $\nabla f(x) = \arg\min_{x} \Ip{x}{u} - f^*(u)$.
\item[(c)] If $f$ is a convex function, $(f^*)^*$ is $f$
\item[(d)] If $f$ is an $L$-smooth and convex function and $u = \nabla f(x)$ then $x = \arg\min_{u} \Ip{u}{x} - f^*(x) \in \partial (f^*)(u)$.
\end{enumerate}
\end{lemma}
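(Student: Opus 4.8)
The plan is to derive all four items from two classical pillars of convex duality: the Fenchel--Moreau biconjugation theorem and the Fenchel--Young inequality together with its equality case. I would begin with item $(c)$, since it is the foundation on which $(b)$ and $(d)$ rest. The inequality $f^{**}\le f$ holds for any function, because $f^{**}(x)=\max_u[\Ip{u}{x}-f^*(u)]=\max_u\min_{x'}[\Ip{u}{x-x'}+f(x')]\le f(x)$ (take $x'=x$). For the reverse inequality when $f$ is closed and convex, I would invoke the supporting-hyperplane / separation argument: the epigraph of a closed convex function is the intersection of all closed affine half-spaces containing it, each of which corresponds to an affine minorant $x\mapsto \Ip{u}{x}-f^*(u)$; taking the supremum of these minorants recovers $f$, giving $f^{**}=f$.

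Next I would record the Fenchel--Young inequality $f(x)+f^*(u)\ge \Ip{u}{x}$, immediate from the definition of $f^*$, together with its equality case: equality holds iff $u\in\partial f(x)$. Combining this with $(c)$ makes the relation symmetric in $f$ and $f^*$, yielding the conjugate--subgradient correspondence $u\in\partial f(x)\iff x\in\partial f^*(u)\iff f(x)+f^*(u)=\Ip{u}{x}$. Items $(b)$ and $(d)$ then follow directly (reading the argmin/argmax in the stated variable $u$). For $(b)$: by $(c)$, $f(x)=\max_u[\Ip{u}{x}-f^*(u)]$, whose first-order optimality condition is $x\in\partial f^*(u)$, equivalently $u\in\partial f(x)$; since $f$ is differentiable the unique maximizer is $u=\nabla f(x)$. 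For $(d)$: if $u=\nabla f(x)$ then $u\in\partial f(x)$, so the correspondence gives $x\in\partial f^*(u)$, which is exactly the statement that $x$ maximizes $\Ip{u}{\cdot}-f(\cdot)$ (equivalently $x=\nabla f^*(u)$ once $f^*$ is differentiable).

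Finally, for $(a)$ I would use the co-coercivity of the gradient of an $L$-smooth convex function (the Baillon--Haddad theorem): $\Ip{\nabla f(x_1)-\nabla f(x_2)}{x_1-x_2}\ge \frac{1}{L}\|\nabla f(x_1)-\nabla f(x_2)\|^2$. Writing $u_i=\nabla f(x_i)$, the correspondence from the previous paragraph gives $x_i=\nabla f^*(u_i)$, so the estimate becomes $\Ip{\nabla f^*(u_1)-\nabla f^*(u_2)}{u_1-u_2}\ge \frac{1}{L}\|u_1-u_2\|^2$, which is precisely the monotonicity characterization of $(1/L)$-strong convexity of $f^*$. The only genuinely nontrivial ingredients are the separation argument behind $(c)$ and the co-coercivity bound behind $(a)$; everything else is bookkeeping with the Fenchel--Young equality. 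Since the lemma is standard and attributed to \cite{nesterov2018lectures,kakade2009duality}, I expect the main effort to be stating these two classical facts cleanly and checking that differentiability of $f$ renders the relevant argmax single-valued, so that the gradient notation in $(b)$ and $(d)$ is justified.
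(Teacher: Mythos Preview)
Your proof sketch is correct and covers all four items cleanly. However, the paper itself does not supply a proof of this lemma: it is stated as a standard result with citations to \cite{nesterov2018lectures,kakade2009duality} and no argument is given. So there is nothing in the paper to compare your approach against; you have simply filled in what the authors left to the references.
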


\subsection{Proximal Operator}
\begin{definition}
For a convex function, $F$, its proximal operator $\prox_{\eta F}(x)$ (parameterized by some $\eta > 0$) is defined as
\begin{align}
\prox_{\eta F}(x) = \arg\min_{\tx} F(x) + \frac1{2\eta} \|\tx - x\|^2
\end{align}
\end{definition}

\subsection{Bregman Divergence, and Relative Lipschitzness and Relative Convexity}

\begin{definition}
Let $r$ be a strongly convex function. Then Bregman divergence $V^{r}_x(\tx)$ w.r.t.~to the distance generating function (d.g.f.) $r$ is defined as the
\begin{align}
V^{r}_{x}(\tx) = r(\tx) - r(x) - \Ip{r'(x)}{\tx - x}
\end{align}
where $r'(x) \in \partial r(x)$ is a sub-gradient of $r$ at $x$.
\end{definition}

\begin{lemma} \label{lem:breg_props}
Let $r$ be a $\sigma$-strongly convex function. Then Bregman divergence $V^{r}_x(\tx)$ w.r.t.~to the d.g.f.~$r$ satisfies $V^{r}_{x}(\tx) \geq ({\sigma}/2) \|\tx - x\|^2$.
\end{lemma}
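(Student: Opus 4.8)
The plan is to simply unfold the definition of the Bregman divergence and then invoke the first-order (subgradient) characterization of $\sigma$-strong convexity of the distance generating function $r$. First I would write, straight from the definition,
\begin{align*}
V^{r}_{x}(\tx) = r(\tx) - r(x) - \Ip{r'(x)}{\tx - x},
\end{align*}
where $r'(x) \in \partial r(x)$ is the subgradient fixed in the definition of $V^{r}_{x}$. Next, I would apply the strong convexity inequality recorded in the definition of $\mu$-strong convexity earlier in the paper (with $\mu$ replaced by $\sigma$): for any $x$, $\tx$ and for \emph{that same} subgradient $r'(x)$,
\begin{align*}
r(\tx) \;\geq\; r(x) + \Ip{r'(x)}{\tx - x} + \frac{\sigma}{2}\,\|\tx - x\|^2.
\end{align*}
Rearranging this bound yields $r(\tx) - r(x) - \Ip{r'(x)}{\tx - x} \geq \frac{\sigma}{2}\|\tx - x\|^2$, and substituting the first display gives $V^{r}_{x}(\tx) \geq \frac{\sigma}{2}\|\tx - x\|^2$, as claimed.

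There is essentially no real obstacle here; the only point that needs a word of care is consistency of the subgradient: the definition of the Bregman divergence already commits to a specific element $r'(x) \in \partial r(x)$, and the strong convexity inequality holds for every such element, so the same $r'(x)$ can be used in both places and the two displays combine cleanly. (If $r$ is differentiable, $r'(x) = \nabla r(x)$ and the argument is unchanged.)
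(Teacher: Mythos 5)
Your proof is correct: unfolding the definition $V^{r}_{x}(\tx) = r(\tx) - r(x) - \Ip{r'(x)}{\tx - x}$ and invoking the first-order $\sigma$-strong convexity inequality with the same subgradient $r'(x)$ immediately gives the bound, and your remark on subgradient consistency is exactly the right point of care. The paper omits a proof of this lemma, treating it as standard, and your argument is precisely the canonical one it implicitly relies on.
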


\begin{lemma}\label{lem:dual_bregman_div}
If $f$ $L$-smooth convex function, $u = \nabla f(x)$ and $u_0 = \nabla f(x_0)$, then $\frac1{2L} \|u-u_0\|^2 \leq V^{f^*}_{u_0}(u) = V^{f}_x(x_0) \leq L/2 \|x-x_0\|^2$
\end{lemma}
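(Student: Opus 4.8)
The plan is to establish the chain of three relations one by one, since each rests on a different ingredient. For the rightmost inequality $V^{f}_x(x_0) \leq \frac{L}{2}\|x-x_0\|^2$, I would simply invoke the $L$-smoothness of $f$: the descent-type inequality $f(x_0) \leq f(x) + \Ip{\nabla f(x)}{x_0 - x} + \frac{L}{2}\|x_0-x\|^2$ rearranges \emph{verbatim} into $V^{f}_x(x_0) = f(x_0) - f(x) - \Ip{\nabla f(x)}{x_0 - x} \leq \frac{L}{2}\|x_0-x\|^2$.

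For the leftmost inequality $\frac{1}{2L}\|u-u_0\|^2 \leq V^{f^*}_{u_0}(u)$, I would use Lemma \ref{lem:fenchel_props}(a), which gives that $f^*$ is $(1/L)$-strongly convex, and then apply Lemma \ref{lem:breg_props} with $\sigma = 1/L$ to the d.g.f.\ $f^*$ and the points $u_0, u$, obtaining $V^{f^*}_{u_0}(u) \geq \frac{1}{2L}\|u-u_0\|^2$ directly.

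The central identity $V^{f^*}_{u_0}(u) = V^{f}_x(x_0)$ is the substantive part. The plan is to use the Fenchel--Young equality at the matched pairs: since $f$ is convex and differentiable and $u = \nabla f(x)$, the maximum defining $f^*(u) = \max_{x'} \Ip{u}{x'} - f(x')$ is attained at $x$, so $f^*(u) = \Ip{u}{x} - f(x)$, and likewise $f^*(u_0) = \Ip{u_0}{x_0} - f(x_0)$; moreover Lemma \ref{lem:fenchel_props}(d) gives $x_0 \in \partial f^*(u_0)$, which is precisely the subgradient we may use when writing $V^{f^*}_{u_0}(u) = f^*(u) - f^*(u_0) - \Ip{x_0}{u - u_0}$. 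Substituting the two Fenchel--Young equalities into this expression, the terms $-\Ip{u_0}{x_0}$ and $+\Ip{x_0}{u_0}$ cancel, leaving $\Ip{u}{x - x_0} - f(x) + f(x_0)$, which is exactly $V^{f}_x(x_0) = f(x_0) - f(x) - \Ip{\nabla f(x)}{x_0 - x}$ once one substitutes $\nabla f(x) = u$.

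The only place requiring care — and the main (mild) obstacle — is the conjugate-duality bookkeeping in the central step: one must ensure the subgradient appearing in the definition of $V^{f^*}_{u_0}(u)$ is taken to be $x_0$ (legitimate by Lemma \ref{lem:fenchel_props}(d)) and that the Fenchel--Young equality is applied at the correctly matched pairs $(x,u)$ and $(x_0,u_0)$ rather than at mismatched ones. Everything else is a direct expansion of the definitions of Bregman divergence, smoothness, and strong convexity.
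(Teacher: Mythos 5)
Your proposal is correct and follows essentially the same route as the paper's proof: the left inequality via the $(1/L)$-strong convexity of $f^*$ (Lemma \ref{lem:fenchel_props}(a) plus Lemma \ref{lem:breg_props}), the middle equality by expanding $V^{f^*}_{u_0}(u)$ with the subgradient choice $x_0 \in \partial f^*(u_0)$ and substituting the Fenchel--Young equalities at the matched pairs $(x,u)$ and $(x_0,u_0)$, and the right inequality directly from $L$-smoothness. Your explicit care about fixing the subgradient to be $x_0$ is exactly the (implicit) choice made in the paper's computation, so no gap remains.
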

\begin{proof}
By Lemma \ref{lem:fenchel_props}, $f^*$ is $1/L$-strongly convex. Then using Lemma \ref{lem:breg_props} and Lemma \ref{lem:fenchel_props} we can show that.
\begin{align}
\frac1{2L} \|u-u_0\|^2 \leq V^{f^*}_{u_0}(u) &= f^*(u) - f^*(u_0) - \Ip{{f^*}'(u_0)}{u-u_0} \\
&= (\Ip{u}{x} - f(x)) - (\Ip{u_0}{x_0} - f(x_0)) - \Ip{x_0}{u-u_0} \\
&= f(x_0) - f(x) - \Ip{u}{x_0-x} \\
&= f(x_0) - f(x) - \Ip{\nabla f(x)}{x_0-x} \\
&= V^f_{x_0}(x) \\
&\leq \frac{L}2 \|x-x_0\|^2
\end{align}
\end{proof}

\begin{definition}
We say that a function is relatively $\mu$-strongly convex w.r.t.~to a Bregman divergence $V^r$ (generated by a strongly convex d.g.f.~$r$) if
\begin{align}
&f(\alpha x_1 + (1-\alpha)x_2) \leq \alpha f(x_1) + (1-\alpha) f(x_2) - {\mu} \alpha (1-\alpha) V^r_{x_1}(x_2) \,,\;\; \text{ for any } \alpha \in [0,1]\,, \nonumber \\
&f(x_2) \geq f(x_1) + \Ip{f'(x_1)}{x_2 - x_2} + {\mu} V^r_{x_1}(x_2) \,, \text{ or equivalently } \nonumber \\
&\Ip{f'(x_1)(x_1) - f'(x_1)(x_2)}{x_1 - x_2} \geq 2 \mu V^r_{x_1}(x_2) \nonumber
\end{align}
for all $x_1$ and $x_2$, where at any point $x$, $f'(x) \in \partial f(x)$ is some sub-gradient of the function in its (Frechet) sub-differential $\partial f(x)$ at that point. Further we say that a function (merely) relatively convex  w.r.t.~to the Bregman divergence $V^r$ if it is relatively $0$-strongly convex w.r.t.~$V^r$ .
\end{definition}

\begin{definition}
We say that a convex function, $f$ is relatively smooth w.r.t.~to a Bregman divergence $V^r$ (generated by a strongly convex d.g.f.~$r$)
\begin{align}
f(x_2) \leq f(x_1) + \Ip{\nabla f(x_1)}{x_2 - x_2} + L V^r_{x_1}(x_2) \,, \text{ or equivalently } \Ip{\nabla f(x_1) - \nabla f(x_2)}{x_1 - x_2} \leq 2L V^r_{x_1}(x_2) \nonumber
\end{align}
\end{definition}

\begin{definition}
For a convex function, $F$, its relative proximal operator $\prox^{r}_{\eta F}(x)$ (parameterized by some $\eta > 0$) w.r.t.~to a Bregman divergence $V^r$ (generated by a strongly convex d.g.f.~$r$) is defined as
\begin{align}
\prox^{r}_{\eta F}(x) = \arg\min_{\tx} F(x) + \frac1{\eta} V^{r}_{x}(\tx)
\end{align}
\end{definition}

\subsection{Minimax Problems}

\begin{lemma}\label{lem:minimax_pseudo_gap}
Let $\phi(x,y)$ be convex-concave objective. Then $\phi(\tx, y^*) - \phi(x^*, \ty) \geq 0$ for all $(\tx,\ty) \in \cX \times \cY$, if $(x^*, y^*) \in \arg\min_{x \in \cX, y \in \cY} \phi(x,y)$.
\end{lemma}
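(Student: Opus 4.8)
The plan is to unpack the hypothesis on $(x^*,y^*)$ into its two one-sided optimality inequalities and then simply chain them; convexity-concavity of $\phi$ enters only to make the hypothesis meaningful and is not needed for the inequality itself. Concretely, I read ``$(x^*,y^*)\in\arg\min_{x\in\cX,y\in\cY}\phi(x,y)$'' as the coordinatewise (saddle-point) optimality condition: $x^*$ minimizes $x\mapsto\phi(x,y^*)$ over $\cX$, and $y^*$ maximizes $y\mapsto\phi(x^*,y)$ over $\cY$. For a convex--concave $\phi$ the set of such points is exactly the product of the primal-optimal and dual-optimal sets, so this is the natural meaning of the stated membership, and the present lemma is precisely what certifies that the quantity $\max_{y}\phi(\hat x,y)-\min_{x}\phi(x,\hat y)$ used elsewhere is a genuine (nonnegative) gap.

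First I would record the two defining inequalities. From $x$-optimality of $x^*$ against $y^*$ fixed, $\phi(\tx,y^*)\ge\phi(x^*,y^*)$ for every $\tx\in\cX$. From $y$-optimality of $y^*$ against $x^*$ fixed, $\phi(x^*,y^*)\ge\phi(x^*,\ty)$ for every $\ty\in\cY$. Then I would concatenate these to obtain $\phi(\tx,y^*)\ge\phi(x^*,y^*)\ge\phi(x^*,\ty)$ for all $(\tx,\ty)\in\cX\times\cY$, and subtract the right-hand side to conclude $\phi(\tx,y^*)-\phi(x^*,\ty)\ge 0$, which is exactly the claim.

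There is essentially no technical obstacle: the statement is an immediate consequence of the definition of a saddle point, so the write-up is just these two or three lines. The only point deserving a moment of care is stating the reading of the hypothesis precisely --- a coordinatewise min--max (saddle) point rather than a joint minimizer of $\phi$ over $\cX\times\cY$, since for the latter the asserted inequality can fail --- and, if one prefers, noting that such a point exists under the convex--concave assumption together with the standing compactness/coercivity in play, so that the lemma is not vacuous.
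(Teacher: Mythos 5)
Your proof is correct and essentially identical to the paper's: the paper adds and subtracts $\phi(x^*,y^*)$ and notes $\phi(\tx,y^*)-\min_x\phi(x,y^*)\ge 0$ and $\max_y\phi(x^*,y)-\phi(x^*,\ty)\ge 0$, which is exactly your chaining of the two saddle-point inequalities. Your remark that the hypothesis must be read as a coordinatewise min--max (saddle) point rather than a joint minimizer is also consistent with how the paper's own proof uses it.
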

\begin{proof}
Notice that the LHS above is positive since
\begin{align}
\phi(\tx, y^*) - \phi(x^*, \ty) &= (\phi(\tx, y^*) - \phi(x^*,y^*)) + (\phi(x^*,y^*) - \phi(x^*, \ty)) \nonumber \\
&= (\phi(\tx, y^*) - \min_x \phi(x,y^*)) + (\max_y \phi(x^*, y) - \phi(x^*,\ty)) \nonumber \\
&\geq 0
\end{align}
\end{proof}
\section{Supporting Results}

\subsection{Proximal Point method: Proof of Lemma \ref{lem:ppm_rule}}
\label{sec:ppm_proof}

\begin{proof}
Since $F$ ($H$) is $\mu$-relatively strong convexity w.r.t.~$r$ ($s$) and $(x_{k+1}, y_{k+1})$ satisfies PPM rule \eqref{eq:ppm_rule}, we can use mirror descent lemma \ref{lem:md_lemma} to get
\begin{align}
F(x_{k+1}) - F(x) + \Ip{A^\top y_{k+1}}{x_{k+1} - x} &\leq
\frac1{\eta_x} V^{r}_{x_{k}}(x) - (\frac1{\eta_x} + {\mu_x})V^{r}_{x_{k+1}}(x) - \frac1{\eta_x} V^{r}_{x_{k}}(x_{k+1}) \nonumber \\
H(y_{k+1}) - H(y) - \Ip{A x_{k+1}}{y_{k+1} - y} &\leq
\frac1{\eta_y} V^{s}_{y_{k}}(y) - (\frac1{\eta_y} + {\mu_y})V^{s}_{y_{k+1}}(y) - \frac1{\eta_y} V^{s}_{y_{k}}(y_{k+1})
\end{align}
Separately, using convexity and concavity of $\Ip{y}{Ax}$ w.r.t.~$x$ and $y$, we get
\begin{align}
\phi(x_{k+1}, y) - \phi(x, y_{k+1})
&\leq F(x_{k+1}) - F(x) + \Ip{A^\top y_{k+1}}{x_{k+1} - x} + \Ip{A x_{k+1}}{y_{k+1} - y} + H(y_{k+1}) - H(x)
\end{align}
Summing three equations and setting $(x,y) = (x^*, y^*)$ we get
\begin{align}\label{eq:ppm_eq1}
\phi(x_{k+1}, y^*) - \phi(x^*, y_{k+1})
&\leq \frac1{\eta_x} V^{r}_{x_{k}}(x^*) - (\frac1{\eta_x} + {\mu_x})V^{r}_{x_{k+1}}(x^*) +  \frac1{\eta_y} V^{s}_{y_{k}}(y^*) - (\frac1{\eta_y} + {\mu_y})V^{s}_{y_{k+1}}(y^*)
\end{align}
Notice that the LHS above is positive by Lemma \ref{lem:minimax_pseudo_gap}, that is
\begin{align}
\phi(x_{k+1}, y^*) - \phi(x^*, y_{k+1}) 
&\geq 0
\end{align}
Let us define $\gamma := 1 + \kappa^{-1}$, where we define also $\kappa := 1/\min(\eta_x \mu_x, \eta_y \mu_y)$.
Now multiplying both the sides of \eqref{eq:ppm_eq1} with $\gamma^{k}$, and using $\gamma \leq 1+\min(\eta_x \mu_x, \eta_y \mu_y)$ we get 
\begin{align}
0 &\leq \gamma^{k} (\phi(x_{k+1}, y^*) - \phi(x^*, y_{k+1})) \nonumber \\
&\leq \frac{\gamma^k}{\eta_x} V^{r}_{x_{k}}(x^*) - \frac{\gamma^{k+1}}{\eta_x} V^{r}_{x_{k+1}}(x^*) +  \frac{\gamma^k}{\eta_y} V^{s}_{y_{k}}(y^*) - \frac{\gamma^{k+1}}{\eta_y} V^{s}_{y_{k+1}}(y^*)\,.
\end{align}
Now summing the above equation from $k=0$ to $k=K-1$, we get that
\begin{align}
\frac{\gamma^{K}}{\eta_x} V^{r}_{x_{K}}(x^*) +  \frac{\gamma^{K}}{\eta_y} V^{s}_{y_{K}}(y^*)
&\leq \frac{1}{\eta_x} V^{r}_{x_{0}}(x^*) + \frac{1}{\eta_y} V^{s}_{y_{0}}(y^*) \,.
\end{align}
Finally, dividing both sides using $2\gamma^K$ and using the $1$-strongly convexity of $r$ and $s$ and Lemma~\ref{lem:breg_props}(a) we get
\begin{align}
\frac{1}{\eta_x} \|x^* - x_k\|^2 +  \frac{1}{\eta_y} \|y^* - y_k\|^2
&\leq \frac{2\gamma^{-K}}{\eta_x} V^{r}_{x_{0}}(x^*) + \frac{2\gamma^{-K}}{\eta_y} V^{s}_{y_{0}}(y^*) 
\end{align}
Finally we get the desired result using the fact that $\gamma^{-1} = 1/(1 + \kappa^{-1}) = 1 - 1/(1 + \kappa) \leq \exp(1/\kappa+1)$.
\end{proof}

\subsection{Primal Dual Method: Proof of Theorem \ref{thm:pd_rule}}
\label{sec:pd_proof}
Since the PD method is an approximation of PPM, former's analysis closely follows that of the latter (proof of Lemma \ref{lem:ppm_rule}).
\begin{proof}
Since $F$ ($H$) is $\mu$-relatively strong convexity w.r.t.~$r$ ($s$) and $(x_{k+1}, y_{k+1})$ satisfies PPM rule \eqref{eq:ppm_rule}, we can use mirror descent lemma \ref{lem:md_lemma} to get
\begin{align}
F(x_{k+1}) - F(x) + \Ip{A^\top \ty_{k+1}}{x_{k+1} - x} &\leq
\frac1{\eta_x} V^{r}_{x_{k}}(x) - (\frac1{\eta_x} + {\mu_x})V^{r}_{x_{k+1}}(x) - \frac1{\eta_x} V^{r}_{x_{k}}(x_{k+1}) \nonumber \\
H(y_{k+1}) - H(y) - \Ip{A x_{k+1}}{y_{k+1} - y} &\leq
\frac1{\eta_y} V^{s}_{y_{k}}(y) - (\frac1{\eta_y} + {\mu_y})V^{s}_{y_{k+1}}(y) - \frac1{\eta_y} V^{s}_{y_{k}}(y_{k+1})
\end{align}
Separately, using convexity and concavity of $\Ip{y}{Ax}$ w.r.t.~$x$ and $y$, we get
\begin{align}
\phi(x_{k+1}, y) - \phi(x, y_{k+1})
&\leq F(x_{k+1}) - F(x) + \Ip{A^\top y_{k+1}}{x_{k+1} - x} + \Ip{A x_{k+1}}{y_{k+1} - y} + H(y_{k+1}) - H(x)
\end{align}
Summing above three equations and setting $(x,y) = (x^*, y^*)$ we get
\begin{align}
\phi(x_{k+1}, y^*) - \phi(x^*, y_{k+1})
&\leq \frac1{\eta_x} V^{r}_{x_{k}}(x^*) - (\frac1{\eta_x} + {\mu_x})V^{r}_{x_{k+1}}(x^*) +  \frac1{\eta_y} V^{s}_{y_{k}}(y^*) - (\frac1{\eta_y} + {\mu_y})V^{s}_{y_{k+1}}(y^*) \;+ \nonumber \\
&\;\;\;\;\; \Ip{A^\top (y_{k+1} - \ty_{k+1})}{x_{k+1} - x^*} - \frac{1}{\eta_{x}} V^{r}_{x_{k}}(x_{k+1}) - \frac{1}{\eta_{v}} V^{s}_{y_{k}}(y_{k+1})
\label{eq:scsc_pd_pf_eq1}
\end{align}

We can further expand out the last four term in the above inequality as follows. Using $\ty_{k+1} = y_{k} + \theta (y_{k} - y_{k-1})$ (equation \eqref{eq:pd_rule}) and Cauchy-Schwarz inequality we get
\begin{align}
\Ip{A^\top(y_{k+1} - \ty_{k+1})}{x_{k+1} - x^*} &= \theta_k \Ip{y_{k-1} - y_{k}}{A(x_{k} - x^*)} - \Ip{y_{k} - y_{k+1}}{A(x_{k+1} - x^*)} \;+ \nonumber \\
&\;\;\;\;\;\; \theta \Ip{y_{k-1} - y_{k}}{A(x_{k+1} - x_{k})} \nonumber \\
&\leq \theta \Ip{y_{k-1} - y_{k}}{A(x_{k} - x^*)} - \Ip{y_{k} - y_{k+1}}{A(x_{k+1} - x^*)} \;+ \nonumber \\
&\;\;\;\;\;\; \frac{\theta \|A\| \alpha_{y}}2 \|y_{k-1} - y_{k}\|^2 + \frac{\theta \|A\| }{2\alpha_{y}} \|x_{k+1} - x_{k}\|^2
\label{eq:scsc_pd_pf_eq2a}
\end{align}
for some $\alpha_y = \sqrt{{\mu_x}/{\mu_y}}$.
Using Lemma \ref{lem:breg_props}(a) and $1$-strong convexity of $\uf^*$ we get that
\begin{align}
- \frac{1}{\eta_{x}} V^{r}_{x_{k}}(x_{k+1})\leq -\frac{1}{2\eta_{x}} \|x_{k+1} - x_{k}\|^2 \label{eq:scsc_pd_pf_eq3a} \\
- \frac{1}{\eta_{y}} V^{s}_{y_{k}}(y_{k+1})\leq -\frac{1}{2\eta_{y}} \|y_{k+1} - y_{k}\|^2 \label{eq:scsc_pd_pf_eq3b}
\end{align}
Summing equations \eqref{eq:scsc_pd_pf_eq2a}, \eqref{eq:scsc_pd_pf_eq3a} and \eqref{eq:scsc_pd_pf_eq3b}, and using $\frac{\alpha_x \|A\|}{2} = \frac{\|A\|}2 \sqrt{\frac{\mu_x}{\mu_y}} \leq \|A\| \sqrt{\frac{\mu_x}{\mu_y}} = \frac1{2\eta_{y}}$ and $\theta \frac{\|A\|}{2\alpha_y} = \theta \frac{\|A\|}2 \sqrt{\frac{\mu_y}{\mu_x}} \leq \|A\| \sqrt{\frac{\mu_y}{\mu_x}} = \frac1{2\eta_{x}}$ we get
\begin{align}
&\Ip{A^\top (y_{k+1} - \ty_{k+1})}{x_{k+1} - x^*} - \frac{1}{\eta_{x}} V^{r}_{x_{k}}(x_{k+1}) - \frac{1}{\eta_{v}} V^{s}_{y_{k}}(y_{k+1}) \nonumber \\
\;\leq\; &\theta \Ip{y_{k-1} - y_{k}}{A(x_{k} - x^*)} - \Ip{y_{k} - y_{k+1}}{A(x_{k+1} - x^*)} \;+ \nonumber \\
&- (\frac{1}{2\eta_{x}} - \theta \frac{\|A\| }{2\alpha_{y}}) \|x_{k+1} - x_{k}\|^2 \;+ 
\theta \frac{\|A\| \alpha_{y}}2 \|y_{k} - y_{k-1}\|^2 - \frac{1}{2\eta_{y}} \|y_{k+1} - y_{k}\|^2 \nonumber \\
\;\leq\; &\theta \Ip{y_{k-1} - y_{k}}{A(x_{k} - x^*)} - \Ip{y_{k} - y_{k+1}}{A(x_{k+1} - x^*)} \;+ \nonumber \\
&\theta \frac{\|A\| \alpha_{y}}2 \|y_{k} - y_{k-1}\|^2 - \frac{\|A\| \alpha_{y}}2 \|y_{k+1} - y_{k}\|^2 \label{eq:scsc_pd_pf_eq4}
\end{align}
Notice that the LHS of \eqref{eq:scsc_pd_pf_eq1} is positive by Lemma \ref{lem:minimax_pseudo_gap}, that is $\phi(x_{k+1}, y^*) - \phi(x^*, y_{k+1}) \geq 0$.
Summing equations \eqref{eq:scsc_pd_pf_eq1} and \eqref{eq:scsc_pd_pf_eq4},and using the above fact we get
\begin{align}
0 \;\leq\; &\phi(x_{k+1}, y^*) - \phi(x^*, y_{k+1}) \nonumber \\
\;\leq\; &\frac{1}{\eta_{x}}V^{r}_{x_k}(x^*) - (\frac1{\eta_{x}} + \mu_x) V^{r}_{x_{k+1}}(x^*) +
\frac{1}{\eta_{y}}V^{s}_{y_{k}}(y) - (\frac1{\eta_{y}} + \mu_y) V^{s}_{y_{k+1}}(y) \;+ \nonumber \\
&\theta \Ip{y_{k-1} - y_{k}}{A(x_{k} - x^*)} - \Ip{y_{k} - y_{k+1}}{A(x_{k+1} - x^*)} \;+ \nonumber \\
&\theta \frac{\|A\| \alpha_{y}}2 \|y_{k} - y_{k-1}\|^2 - \frac{\|A\| \alpha_{y}}2 \|y_{k+1} - y_{k}\|^2 \label{eq:scsc_pd_pf_eq5}
\end{align}
Let us define $\gamma := 1 + \kappa^{-1}$, where we define also $\kappa := 1/\min(\eta_x \mu_x, \eta_y \mu_y) = 2\|A\|/\sqrt{\mu_x \mu_y}$.
Now multiplying both the sides of \eqref{eq:scsc_pd_pf_eq5} with $\gamma^{k}$, and using $\gamma \leq 1+\min(\eta_x \mu_x, \eta_y \mu_y)$ and $\theta = 1/\gamma$ we get 
\begin{align}
0 \leq &\frac{\gamma^k}{\eta_x} V^{r}_{x_{k}}(x^*) - \frac{\gamma^{k+1}}{\eta_x} V^{r}_{x_{k+1}}(x^*) +  \frac{\gamma^k}{\eta_y} V^{s}_{y_{k}}(y^*) - \frac{\gamma^{k+1}}{\eta_y} V^{s}_{y_{k+1}}(y^*) \nonumber \\
&\gamma^{k-1} \Ip{y_{k-1} - y_{k}}{A(x_{k} - x^*)} - \gamma^{k} \Ip{y_{k} - y_{k+1}}{A(x_{k+1} - x^*)} \;+ \nonumber \\
&\gamma^{k-1} \frac{\|A\| \alpha_{y}}2 \|y_{k} - y_{k-1}\|^2 -  \gamma^{k} \frac{\|A\| \alpha_{y}}2 \|y_{k+1} - y_{k}\|^2
\end{align}
Now summing the above equation from $k=0$ to $k=K-1$ and using $y_{-1}=y_0$, we get that
\begin{align}
\frac{\gamma^{K}}{\eta_x} V^{r}_{x_{K}}(x^*) +  \frac{\gamma^{K}}{\eta_y} V^{s}_{y_{K}}(y^*)
&\leq \frac{1}{\eta_x} V^{r}_{x_{0}}(x^*) + \frac{1}{\eta_y} V^{s}_{y_{0}}(y^*) \;+ \nonumber \\
&\;\;\;\; \gamma^{K-1} \Ip{y_{K-1} - y_{K}}{A(x_{K} - x^*)} +  \gamma^{K-1} \frac{\|A\| \alpha_{y}}2 \|y_{K} - y_{K-1}\|^2 
\end{align}
Using Cauchy-Schwarz inequality, $\theta \frac{\|A\|}{2\alpha_y} = \theta \frac{\|A\|}2 \sqrt{\frac{\mu_y}{\mu_x}} \leq \|A\| \sqrt{\frac{\mu_y}{\mu_x}} = \frac1{2\eta_{x}}$, Lemma \ref{lem:breg_props}(a) we can show that
\begin{align}
\gamma^{K-1} &\Ip{y_{K-1} - y_{K}}{A(x_{K} - x^*)} -  \gamma^{K-1} \frac{\|A\| \alpha_{y}}2 \|y_{k+1} - y_{k}\|^2 \nonumber \\
&\leq \gamma^{K-1} \frac{\|A\| }{2\alpha_{y}} \|x_{K} - x^*\|^2 + \gamma^{K-1} \frac{\|A\| \alpha_{y}}2 \|y_{K} - y_{K-1}\|^2 -  \gamma^{K-1} \frac{\|A\| \alpha_{y}}2 \|y_{K} - y_{K-1}\|^2 \nonumber \\
&\leq \gamma^{K-1} \frac{\|A\| }{2\alpha_{y}} \|x_{K} - x^*\|^2 \nonumber \\
&\leq \gamma^{K} \frac{1}{4\eta_{y}} \|x_{K} - x^*\|^2 \nonumber \\
&\leq \gamma^{K} \frac{1}{2\eta_{y}} V^{r}{x_{K}} (x^*) \label{eq:scsc_pd_pf_eq6}
\end{align}
Summing equations \eqref{eq:scsc_pd_pf_eq6} and \eqref{eq:scsc_pd_pf_eq7}, and dividing both sides of the resulting equation using $2\gamma^K$ and using the $1$-strongly convexity of $r$ and $s$ and Lemma~\ref{lem:breg_props}(a) we get
\begin{align}
\frac{1}{2\eta_x} \|x^* - x_k\|^2 +  \frac{1}{\eta_y} \|y^* - y_k\|^2
&\leq \frac{2\gamma^{-K}}{\eta_x} V^{r}_{x_{0}}(x^*) + \frac{2\gamma^{-K}}{\eta_y} V^{r}_{y_{0}}(y^*) \label{eq:scsc_pd_pf_eq7}
\end{align}
Finally we get the desired result by using the choice $\gamma = 1+\min(\eta_x \mu_x, \eta_y \mu_y) = 1 + \kappa^{-1}$, which implies that $\gamma^{-1} = 1/(1 + \kappa^{-1}) = 1 - 1/(1 + \kappa) \leq \exp(1/\kappa+1)$.
\end{proof}

\subsection{Proof of Lemma \ref{lem:sc_minus_quad}}
\label{sec:sc_minux_quad_pf}
\begin{lemma}\label{lem:sc_minus_quad}
If $f$ is $\mu$-strongly convex and $L$-smooth, then $\uf = f- \mu \|\cdot\|^2/2$ is convex and $(L-\mu)$-smooth.
\end{lemma}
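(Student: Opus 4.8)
The plan is to work directly with gradients. Since $L$-smoothness forces $f$ to be differentiable, $\uf$ is differentiable with $\nabla \uf(x) = \nabla f(x) - \mu x$. All of the claimed properties will then follow by subtracting the (exact) first- and second-order information of the pure quadratic $\tfrac{\mu}{2}\|\cdot\|^2$ from the corresponding (in)equalities for $f$.

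\textbf{Convexity.} I would invoke the gradient-monotonicity characterization of $\mu$-strong convexity from Appendix \ref{sec:prelim_details}: $\Ip{\nabla f(x_1) - \nabla f(x_2)}{x_1 - x_2} \geq \mu\|x_1 - x_2\|^2$ for all $x_1, x_2$. Since $\Ip{\mu x_1 - \mu x_2}{x_1 - x_2} = \mu\|x_1-x_2\|^2$, subtracting this from both sides gives $\Ip{\nabla\uf(x_1) - \nabla\uf(x_2)}{x_1 - x_2} \geq 0$, which is exactly the statement that $\uf$ is ($0$-strongly) convex.

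\textbf{$(L-\mu)$-smoothness.} I would start from the descent lemma for $f$, namely $f(x_2) \leq f(x_1) + \Ip{\nabla f(x_1)}{x_2 - x_1} + \tfrac{L}{2}\|x_2 - x_1\|^2$, and subtract from it the identity $\tfrac{\mu}{2}\|x_2\|^2 = \tfrac{\mu}{2}\|x_1\|^2 + \Ip{\mu x_1}{x_2 - x_1} + \tfrac{\mu}{2}\|x_2 - x_1\|^2$, which holds \emph{with equality} because $\tfrac{\mu}{2}\|\cdot\|^2$ is quadratic. Regrouping terms via $\uf = f - \tfrac{\mu}{2}\|\cdot\|^2$ and $\nabla\uf(x_1) = \nabla f(x_1) - \mu x_1$ yields $\uf(x_2) \leq \uf(x_1) + \Ip{\nabla\uf(x_1)}{x_2 - x_1} + \tfrac{L-\mu}{2}\|x_2 - x_1\|^2$, i.e.\ $\uf$ is $(L-\mu)$-smooth. (Equivalently, one can subtract $\mu\|x_1-x_2\|^2$ from the inner-product form $\Ip{\nabla f(x_1) - \nabla f(x_2)}{x_1 - x_2} \leq L\|x_1 - x_2\|^2$ of $L$-smoothness to get the inner-product form of $(L-\mu)$-smoothness for $\uf$.)

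There is essentially no deep obstacle here; the only point requiring a little care is the bookkeeping that the quadratic $\tfrac{\mu}{2}\|\cdot\|^2$ matches its own second-order upper bound exactly, so subtracting it shifts the smoothness constant by \emph{exactly} $\mu$ rather than merely preserving an $L$ bound --- this is what makes the constant $L-\mu$ sharp. Note also that $L \geq \mu$ holds automatically for any function that is both $\mu$-strongly convex and $L$-smooth (e.g.\ from comparing the two inner-product inequalities at any $x_1 \neq x_2$), so $(L-\mu)$-smoothness is a meaningful statement.
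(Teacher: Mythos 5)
Your proposal is correct and follows essentially the same route as the paper: the paper's proof also works with the gradient-monotonicity characterizations, writing $\Ip{\nabla \uf(x_1) - \nabla \uf(x_2)}{x_1 - x_2} = \Ip{\nabla f(x_1) - \nabla f(x_2)}{x_1 - x_2} - \mu\|x_1 - x_2\|^2$ and bounding it above by $(L-\mu)\|x_1-x_2\|^2$ and below by $0$, which is exactly the inner-product variant you mention parenthetically. Your additional descent-lemma derivation and the remark that $L \geq \mu$ are fine but not needed beyond what the paper does.
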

\begin{proof}
It can be easily proved by noticing that
\begin{align}
\Ip{\nabla \uf(x) - \nabla \uf(\tx)}{x-\tx} &= \Ip{\nabla f(x) - \nabla f(\tx)}{x-\tx} + \Ip{-\mu x + \mu \tx}{x-\tx} \nonumber \\
&\leq (L-\mu) \|x-\tx\|^2
\end{align}
Similarly we can also easily show that $\Ip{\nabla \uf(x) - \nabla \uf(\tx)}{x-\tx} \geq 0$
\end{proof}

\subsection{Proof of Corollary \ref{cor:sc_agd}}
\label{sec:sc_agd_pf}

We omit the proof of Corollary \ref{cor:sc_agd} since it is very similar to that of Theorem \ref{thm:pd_rule}. Only additional step is to upper-bound $V^{\uf^*}_{u_0}(u^*)$ by $(L_x - \mu_x) \|x_0 - x^*\|^2/2$ using $u^* = \nabla \uf(x^*)$ and $u_0 = \nabla \uf (x_0)$ and Lemma \ref{lem:dual_bregman_div}.

\subsection{Proof of Lemma \ref{lem:sc_agd_update}}
\label{sec:sc_agd_update_pf}

\begin{proof}
We want to prove that $x_k$ iterates of \eqref{eq:pd_rule} (repeated below)
\begin{equation}
\left\{
\begin{aligned}
\tu_{k+1} &= u_{k} + \theta (u_{k} - u_{k-1}) \\
x_{k+1} &= \arg\min_{x} \Ip{\tilde{u}_{k+1}}{x} +\frac{\mu}{2} \|x\|^2+\frac{1}{2\eta_x}\|x-x_k\|^2  \\
u_{k+1} &= \arg\min_{u} -\Ip{x_{k+1}}{u} + \uf^*(u) + \frac1{\eta_u} V^{\uf^*}_{u_{k}}(u)
\end{aligned}
\right.
\end{equation}
and \eqref{eq:sc_equiv_agd_rule} (repeated below)
\begin{equation}
\left\{
\begin{aligned}
\tnabla_{k+1} &= \nabla \uf(\ux_{k}) + \theta (\nabla \uf(\ux_{k}) - \nabla \uf(\ux_{k-1}))  \\
x_{k+1} &= (x_k - \eta_x \tnabla_{k+1})/(1+\eta_x \mu) \\
\ux_{k+1} &= ({\ux_{k} + \eta_u x_{k+1}})/({1+\eta_u}) 
\end{aligned}
\right.
\end{equation}
are equivalent under the given condition. For this we will prove a stronger condition which additionally states that $u_{k} = \nabla \uf(\ux_k)$ for all $k = -1,0,1,\ldots$.

We prove this by induction. Let us initialize both the updates using $x_0$. For the base case it is easy to see that when $u_{-1} = u_0 = \nabla f(\ux_{-1}) = \nabla f(\ux_{0}) = \nabla f(x_0)$.

Let $u_{\widetilde{k}-1} = \nabla f(\ux_{\widetilde{k}-1})$ and $u_{\widetilde{k}} = \nabla f(\ux_{\widetilde{k}})$, and $x_{\widetilde{k}}$ iterates of the both the rules match for $\widetilde{k} = 0,1,\ldots,k$. Then clearly, $\tu_{k+1} = \tnabla_{k+1}$. This implies that $x_{k+1}$ iterates are the same for both the rules.

Next we will prove that $u_{k+1} = \nabla f(\ux_{k+1})$. Note that $u_{k+1} = \arg\min_{u} -\Ip{x_{k+1}}{u} + \uf^*(u) + \frac1{\eta_u} V^{\uf^*}_{u_{k}}(u)$. However, $V^{\uf^*}_{u_{k}}(u) = \uf^*(u) - \uf^*(u_k) - \Ip{(\uf^{*})'(u_k)}{u-u_k}$ is not defined unless we fix a sub-gradient $(\uf^{*})'(u_k) \in \partial \uf^*(u_k)$ at $u_k$. For making the rules equivalent we set $(\uf^{*})'(u_k) = \ux_k$. Note that $\ux_k \in \partial \uf^*(u_k)$ since $u_k = \nabla \uf(\ux_k)$ (Lemma \ref{lem:fenchel_props}(d)). Then $u_{k+1} = \nabla f(\ux_{k+1})$, since
\begin{align}
u_{k+1} &= \arg\min_{u} -\Ip{x_{k+1}}{u} + \uf^*(u) + \frac1{\eta_u} V^{\uf^*}_{u_{k}}(u) \nonumber \\
&= \arg\min_{u} -\Ip{\eta_u x_{k+1} + (\uf^{*})'(u_k)}{u} + (1+\eta_u) \uf^*(u) \nonumber \\
&= \arg\min_{u} -\Ip{\eta_u x_{k+1} + \ux_k}{u} + (1+\eta_u) \uf^*(u)
\end{align}
and by Lemma \ref{lem:fenchel_props}(b) $u_{k+1} = \nabla f(\ux_{k+1})$ is a valid and only solution (because of strong convexity of $\uf^*$) to the above optimization, where $\ux_{k+1} = (\ux_{k} + \eta_u x_{k+1})/(1+\eta_u)$. Hence, we prove the equivalence between the rules by induction.
\end{proof}

\subsection{Extension of LPD to a problem with additional proximal-friendly terms}
\label{sec:lpd_extension}
LPD can be extended to solve more general (possibly nonsmooth) minimax problems with the same guarantees:
\begin{align}\label{eq:scsc_general_problem_appendix}
\min_{x \in \cX} \max_{y \in \cY} \; F(x) + f(x) + \Ip{y}{Ax} - h(y) - H(x),
\end{align}
where $F$ and $H$ are convex (and possibly non-smooth) and we have access to their proximal operators and $f,h$ satisfy Assumption \ref{assume:ldp_main_assumptions}. The only change we need to make is to replace the $x_{k+1}$ and $y_{k+1}$ update steps in Algorithm \ref{algo:general_lpd} with
\begin{align}
&x_{k+1} = \arg\min_{x \in \cX} \Ip{A^\top \ty_{k+1} + \tu_{k+1}}{x} + \nonumber \\
&\;\;\;\;\;\;\;\;\;\;\;\;\;\;\;\;\;\;\;\;\;\;\;\;\; \|x-x_{k}\|^2/{2\eta_{x}} + {\mu_x}\|x\|^2/2 + F(x) \nonumber\\
&y_{k+1} = \arg\min_{y \in \cY} -\Ip{A^\top \tx_{k+1} + \tv_{k+1}}{y} + \label{eq:scsc_lpd_rule_proximal}  \\
&\;\;\;\;\;\;\;\;\;\;\;\;\;\;\;\;\;\;\;\;\;\;\;\;\; \|y-y_{k}\|^2/{2\eta_{y}} + {\mu_y}\|y\|^2/2 + H(x) \nonumber \,.
\end{align}
Then the same guarantees as Corollaries \ref{cor:smooth_scsc_primal_dual} and \ref{cor:smooth_csc_primal_dual} holds for this update. We omit the analysis since it is similar to the proof of Theorem \ref{thm:smooth_lpd_primal_dual}.

\subsection{Mirror-Descent lemma}
\label{sec:md_lemma}

\begin{lemma}[\cite{nesterov2018lectures}]\label{lem:md_lemma}
Let $r$ be strongly convex, $F$ be $\mu$-(relatively) strongly w.r.t.~to $r$, and 
\begin{align}
x_{k+1} = \arg\min_{x} \Ip{g}{x} + F(x) + \frac1{\eta} V^{r}_{x_k}(x)
\end{align}
then
\begin{align}
\Ip{g}{x_{k+1}-x} + F(x_{k+1}) - F(x) \leq \frac1{\eta} V^{r}_{x_k}(x) - (\frac1{\eta} + \mu) V^{r}_{x_{k+1}}(x) - \frac1{\eta} V^{r}_{x_k}(x_{k+1})
\end{align}
\end{lemma}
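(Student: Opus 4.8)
The plan is to run the classical three-point (``prox-inequality'') argument for mirror-type updates. First I would write the first-order optimality condition for the strongly convex minimization defining $x_{k+1}$. Since $x_{k+1}$ minimizes $x\mapsto \Ip{g}{x}+F(x)+\frac1\eta V^{r}_{x_k}(x)$ and $\nabla_x V^{r}_{x_k}(x)$ ``$=$'' $r'(x)-r'(x_k)$ (using the same subgradient $r'(x_k)$ that defines $V^{r}_{x_k}$), there is a subgradient $F'(x_{k+1})\in\partial F(x_{k+1})$ and a subgradient $r'(x_{k+1})\in\partial r(x_{k+1})$ such that
$$\Ip{g + F'(x_{k+1}) + \tfrac1\eta\big(r'(x_{k+1})-r'(x_k)\big)}{x - x_{k+1}} \ge 0 \quad\text{for all }x.$$
Rearranging, $\Ip{g}{x_{k+1}-x} \le \Ip{F'(x_{k+1})}{x-x_{k+1}} + \tfrac1\eta\Ip{r'(x_{k+1})-r'(x_k)}{x-x_{k+1}}$.

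Next I would bound the two right-hand terms separately. For the $F$-term, relative $\mu$-strong convexity of $F$ with respect to $V^{r}$ (in the subgradient form from the definition in Appendix~\ref{sec:prelim_details}) gives $\Ip{F'(x_{k+1})}{x-x_{k+1}} \le F(x) - F(x_{k+1}) - \mu V^{r}_{x_{k+1}}(x)$. For the $r$-term, I would invoke the three-point identity for Bregman divergences, $\Ip{r'(x_{k+1})-r'(x_k)}{x-x_{k+1}} = V^{r}_{x_k}(x) - V^{r}_{x_{k+1}}(x) - V^{r}_{x_k}(x_{k+1})$, which follows by directly expanding each $V^{r}$ from its definition and cancelling the affine terms (here $V^{r}_{x_{k+1}}$ must be defined using the same $r'(x_{k+1})$ fixed above).

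Finally, substituting both bounds into the displayed inequality and moving $F(x_{k+1})-F(x)$ to the left gives exactly
$$\Ip{g}{x_{k+1}-x} + F(x_{k+1}) - F(x) \le \tfrac1\eta V^{r}_{x_k}(x) - \big(\tfrac1\eta + \mu\big) V^{r}_{x_{k+1}}(x) - \tfrac1\eta V^{r}_{x_k}(x_{k+1}),$$
which is the claim. I expect the only genuinely delicate point to be the optimality condition: when $F$ (or the ambient constraint set) is nonsmooth, ``$0\in\partial(\cdot)$'' must be phrased as a variational inequality, and one must commit to the specific subgradients $F'(x_{k+1}), r'(x_{k+1})$ produced by that condition and use those same subgradients consistently in the relative-strong-convexity bound and the three-point identity; once this choice is pinned down, the remainder is routine bookkeeping with the definitions.
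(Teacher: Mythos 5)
Your proposal is correct and follows exactly the standard three-point argument (optimality as a variational inequality, relative strong convexity of $F$ at $x_{k+1}$, and the Bregman three-point identity) that underlies the cited result; the paper itself states Lemma~\ref{lem:md_lemma} as a known result from \cite{nesterov2018lectures} without giving a proof, so there is nothing in the paper your argument deviates from. Your closing caveat about committing to one fixed subgradient $r'(x_{k+1})$ (and $F'(x_{k+1})$) and using it consistently in both the identity and the strong-convexity bound is precisely the right delicate point — the paper handles the analogous issue explicitly when it fixes $(\uf^{*})'(u_k)=\ux_k$ in the proof of Lemma~\ref{lem:sc_agd_update}.
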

\section{Algorithm for Bilineraly-coupled smooth minimax problem}
First we will prove a general result for Bilineraly-coupled smooth minimax problem. Then we specialize it to the Bi-SC-SC and Bi-C-SC cases. 

As mentioned in the main text we first apply the follow reformulation to \eqref{eq:scsc_problem}.
\begin{align}
\min_{x \in \cX} \max_{y \in \cY} \; &[g(x, y) = f(x) + \Ip{y}{Ax} - h(y)] \label{eq:scsc_problema_pf} \\
= \min_{x \in \cX} \max_{y \in \cY} \min_{v} \max_{u} &[g(x,y;u,v) = -\uf^*(u) + \Ip{u}{x} + \frac{\mu_x}2 \|x\|^2 + \Ip{y}{Ax} - \frac{\mu_y}2 \|y\|^2 - \Ip{v}{y} + \uh^*(v)]
\end{align}
where
\begin{align}
\uf^*(u) := \max_{x \in \cX \, \cap \, \domain(f)} \Ip{u}{x} - [\uf := f(x) - (\mu_x/2) \|x\|^2] \\
\uh^*(v) := \max_{y \in \cY \,\cap\, \domain(h)} \Ip{v}{y} - [\uh := h(x) - (\mu_y/2) \|y\|^2]
\end{align}
Note that by Lemma \ref{lem:sc_minus_quad}, $\uf$ is convex and $(L_x - \mu_x)$-smooth, and $\uh$ is convex and $(L_y - \mu_y)$-smooth. Then by Lemma \ref{lem:fenchel_props}(a) $\uf^*$ is $1/(L_x - \mu_x)$-strongly convex, and $\uh^*$ is $1/(L_y - \mu_y)$-strongly convex.

Instead of analyzing the Algorithm \ref{algo:lpd}, we analyze the original update rule \eqref{eq:scsc_lpd_rule} (Algorithm \ref{algo:general_lpd}) which is a conceptually easier implementation of LPD. By the following lemma we show that Algorithm \ref{algo:lpd} and Algorithm \ref{algo:general_lpd} of these are equivalent, when initialized appropriately.

\begin{lemma}[Same as Lemma \ref{lem:scsc_lpd_update}] \label{lem:scsc_lpd_update_appendix}
Let us initialize Algorithm \ref{algo:lpd} with $(\ux_{-1},\ux_{0}, \uy_{-1},\uy_{0})$$\,=\,$$(x_0, x_0, y_0, y_0)$, Algorithm \ref{algo:general_lpd} with $(u_{-1},u_{0}, v_{-1},v_{0})$$\,=\,$$(\nabla \uf(\ux_{-1}),\nabla \uf(\ux_{0}),\nabla \uh(\ux_{-1}),\nabla \uh(\ux_{0}))$, and both the algorithms with the same $(x_0, y_0)$. Then for problem \eqref{eq:scsc_problema_pf}, iterates $(x_{k}, y_{k})$ of the Algorithm \ref{algo:lpd} and Algorithm \ref{algo:general_lpd} are the same.
\end{lemma}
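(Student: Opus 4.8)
The plan is to mirror the proof of Lemma~\ref{lem:sc_agd_update}, running a single induction on the iteration index $k$ that simultaneously tracks the primal iterates $(x_k,y_k)$ and the auxiliary dual variables $(u_k,v_k)$ of the conceptual update rule~\eqref{eq:scsc_lpd_rule} (Algorithm~\ref{algo:general_lpd}). Concretely, I would establish the stronger invariant that for every $k\ge -1$ one has $u_k=\nabla\uf(\ux_k)$ and $v_k=\nabla\uh(\uy_k)$, and that the iterates $(x_k,y_k)$ generated by~\eqref{eq:scsc_lpd_rule} coincide with those of Algorithm~\ref{algo:lpd}. The base case $k\in\{-1,0\}$ is immediate from the stated initializations: $(\ux_{-1},\ux_0,\uy_{-1},\uy_0)=(x_0,x_0,y_0,y_0)$, $(u_{-1},u_0,v_{-1},v_0)=(\nabla\uf(\ux_{-1}),\nabla\uf(\ux_0),\nabla\uh(\uy_{-1}),\nabla\uh(\uy_0))$, and both algorithms share the same $(x_0,y_0)$.

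For the inductive step, assume the invariant holds through index $k$. Then the extrapolation steps agree on all variables: $(\tx_{k+1},\ty_{k+1})$ are defined by the same formula in both algorithms, while
\[
\tu_{k+1}=(1+\theta_k)u_k-\theta_k u_{k-1}=(1+\theta_k)\nabla\uf(\ux_k)-\theta_k\nabla\uf(\ux_{k-1})=\tnabla_{x,k+1},
\]
and symmetrically $\tv_{k+1}=\tnabla_{y,k+1}$. Consequently the $x$-subproblem in~\eqref{eq:scsc_lpd_rule} is the strongly convex quadratic $\min_{x\in\cX}\Ip{A^\top\ty_{k+1}+\tnabla_{x,k+1}}{x}+\|x-x_k\|^2/(2\eta_{x,k})+\mu_x\|x\|^2/2$, whose unique minimizer is exactly the closed form in line~\ref{algo_line:x_update_lpd} of Algorithm~\ref{algo:lpd}; the $y$-subproblem is treated identically, matching line~\ref{algo_line:y_update_lpd}. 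Hence $(x_{k+1},y_{k+1})$ coincide in the two algorithms.

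It remains to propagate the invariant through the dual updates, which is the one genuinely delicate point. The Bregman divergence $V^{\uf^*}_{u_k}(\cdot)$ is only determined once a subgradient $(\uf^*)'(u_k)\in\partial\uf^*(u_k)$ is fixed; by the inductive hypothesis $u_k=\nabla\uf(\ux_k)$, so Lemma~\ref{lem:fenchel_props}(d) shows $\ux_k\in\partial\uf^*(u_k)$ and we select $(\uf^*)'(u_k)=\ux_k$. Then, discarding the terms constant in $u$ and rescaling by $\eta_{u,k}$, the $u$-update in~\eqref{eq:scsc_lpd_rule} reads $u_{k+1}=\arg\min_u -\Ip{\eta_{u,k}x_{k+1}+\ux_k}{u}+(1+\eta_{u,k})\uf^*(u)$. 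Since $\uf$ is convex and $(L_x-\mu_x)$-smooth (Lemma~\ref{lem:sc_minus_quad}), $\uf^*$ is $1/(L_x-\mu_x)$-strongly convex (Lemma~\ref{lem:fenchel_props}(a)), so this minimizer is unique; by Lemma~\ref{lem:fenchel_props}(b) it equals $\nabla\uf\big((\ux_k+\eta_{u,k}x_{k+1})/(1+\eta_{u,k})\big)=\nabla\uf(\ux_{k+1})$, with $\ux_{k+1}$ exactly as updated in Algorithm~\ref{algo:lpd}. The same reasoning with $(\uh,\uh^*,\uy)$ yields $v_{k+1}=\nabla\uh(\uy_{k+1})$, closing the induction and hence the lemma. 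I expect no real obstacle beyond this subgradient-selection observation; the remaining steps are routine quadratic minimization and bookkeeping, and they go through verbatim for time-varying stepsizes since the averaging identity is re-derived at each iteration.
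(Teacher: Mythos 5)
Your proposal is correct and is essentially the paper's intended argument: the paper omits this proof precisely because it follows the proof of Lemma~\ref{lem:sc_agd_update}, and your induction (tracking the invariant $u_k=\nabla\uf(\ux_k)$, $v_k=\nabla\uh(\uy_k)$, matching the extrapolations, identifying the projected closed-form $x$- and $y$-updates, and fixing the subgradient $(\uf^*)'(u_k)=\ux_k$ so Lemma~\ref{lem:fenchel_props} gives $u_{k+1}=\nabla\uf(\ux_{k+1})$) is exactly that extension to the four-variable setting. No gaps to flag.
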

\begin{proof}
We omit the proof since we can easily prove it using the same techniques as used in the proof of Lemma \ref{lem:sc_agd_update}.
\end{proof}

\begin{algorithm}[t!]
	\SetAlgoLined
	\DontPrintSemicolon
	\SetKwProg{myproc}{Procedure}{}{}
	{\bf Required}: $\cX$, $\cY$, $(f, L_x, \mu_x)$, $(A, \|A\|)$, $(h, L_y, \mu_y)$, %
	$K$, %
	$\{(\eta_{x,k}, \eta_{y,k}, \eta_{u,k}, \eta_{v,k}, \theta_{k})\}_{k=0}^{K-1}$ \\
	\nl Initialize $(x_{-1}, y_{-1}) = (x_0, y_0) \in \cX \times \cY$ \\
	\nl Set $\uf = f - (\mu_x/2) \|\cdot\|^2$, $\uh = h - (\mu_y/2) \|\cdot\|^2$, %
	$(\ux_{-1}, \uy_{-1}) = (\ux_0, \uy_0) = (x_0, y_0)$ \\
	\For{$0\leq k\leq K-1$}{ 
		\nl $\tx_{k+1} = x_{k} + \theta_k (x_k - x_{k-1})\,,\;
        \ty_{k+1} = y_{k} + \theta_k (y_k - y_{k-1})$, \\
        $\tu_{k+1} = u_{k} + \theta_k (u_{k} - u_{k-1})$, $\tv_{k+1}) = v_{k} + \theta_k (v_{k} - v_{k-1})$
        \label{algo_line:extrapolate} \\
		\nl $x_{k+1} = \arg\min_{x \in \cX} \Ip{A^\top \ty_{k+1} + \tu_{k+1} }{x} + \frac1{2\eta_{x,k}} \|x-x_{k}\|^2 + \frac{\mu_x}2\|x\|^2$ 
		\label{algo_line:x_update} \\
		\nl $y_{k+1} = \arg\min_{y \in \cY} -\Ip{A \tx_{k+1} - \tv_{k+1}}{y} + \frac1{2\eta_{y,k}} \|y-y_{k}\|^2 + \frac{\mu_{y}}2 \|y\|^2$ 
		\label{algo_line:y_update} \\
        \nl $u_{k+1} = \arg\min_{u} -\Ip{x_{k+1}}{u} + \uf^*(u) + V^{\uf^*}_{u_k}(u)/{\eta_u}$ \label{algo_line:u_update} \\
        \nl $v_{k+1} = \arg\min_{v} -\Ip{y_{k+1}}{v} + \uh^*(v) + V^{\uh^*}_{v_k}(v)/{\eta_v}$ \label{algo_line:v_update} 
	}
	\nl \Return $(x_K, y_K, u_K, v_K)$
	\caption{O-LPD: Original Lifted Primal-Dual algorithm}
	\label{algo:general_lpd}
\end{algorithm}
We prove the follow Theorem for characterizing the output of Algorithm \ref{algo:general_lpd}.
\begin{theorem}\label{thm:smooth_lpd_primal_dual}
Let there exists positive numbers $\lambda_k$, $\alpha_{x,k}$, $\alpha_{y,k}$, $\alpha_{u,k}$, $\alpha_{v,k}$ for all $k=-1,0,1\ldots$, such that $\lambda_{k-1} = \theta_k \lambda_k$,
\begin{align}
&\theta_k(\frac{\|A\| }{\alpha_{y,k}} + \frac{1}{\alpha_{u,k}}) + {\|A\| \alpha_{x,k+1}} \leq \frac{1}{\eta_{x,k}}  \;,\;\;\;\; \alpha_u \leq \frac1{\eta_{u,k} (L_x-\mu_x)} \;, \label{eq:smooth_scsc_primal_dual_alpha1} \\
&\theta_k(\frac{\|A\| }{\alpha_x} + \frac{1}{\alpha_v}) + {\|A\| \alpha_{y,k+1}} \leq \frac{1}{\eta_{y,k}}  \;,\;\;\;\;
\alpha_v \leq \frac1{\eta_{v,k} (L_y-\mu_y)} \label{eq:smooth_scsc_primal_dual_alpha2} \\
\frac{\lambda_{k+1}}{\lambda_{k}} \leq \min \bigg( &\frac{\eta_{x,k+1}(1+\eta_{x,k} \mu_x)}{\eta_{x,k}} \,,\; \frac{\eta_{y,k+1}(1+\eta_{y,k} \mu_x)}{\eta_{x,k}} \,,\;
\frac{\eta_{u,k+1}(1+\eta_{u,k} \mu_x)}{\eta_{x,k}} \,,\;
\frac{\eta_{v,k+1}(1+\eta_{v,k} \mu_x)}{\eta_{x,k}} \bigg) \label{eq:smooth_scsc_primal_dual_lambda}
\end{align}
for all $k=0,1,\ldots$.
Then the following is true for any $K = 1,2,\ldots$, $x \in \cX$, $y  \in \cY$, $u$, $v$
\begin{align}
\sum_{k=0}^{K-1} \lambda_k  &[\Phi(x_{k+1}, y; u, v_{k+1}) - \Phi(x, y_{k+1}; u_{k+1}, v)] \nonumber \\
\;\leq\; &\frac{\lambda_0}{2\eta_{x,0}}\|x-x_{0}\|^2 - \frac{\lambda_{K} \|A\| \alpha_{x,K+1}}{2} \|x-x_{K}\|^2 +
\frac{\lambda_0 }{2\eta_{y,0}}\|y-y_{0}\|^2 - \frac{\lambda_{K} \|A\| \alpha_{y,K+1}}{2} \|y-y_{K}\|^2 \;+ \nonumber \\
&\frac{\lambda_0}{\eta_{u,0}}V^{\uf^*}_{u_0}(u) - \frac{\lambda_{K}}{\eta_{u,K}}V^{\uf^*}_{u_{K}}(u) +
\frac{\lambda_0}{\eta_{v,0}}V^{\uh^*}_{v_{0}}(v) - \frac{\lambda_{K}}{\eta_{v,K}} V^{\uh^*}_{v_{K}}(v)
\end{align}
\end{theorem}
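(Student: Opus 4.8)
The plan is to mimic the primal--dual analysis behind Theorem~\ref{thm:pd_rule} (and ultimately the proximal-point argument of Lemma~\ref{lem:ppm_rule}), but now tracking all four lifted variables $x,y,u,v$ at once. First I apply the mirror-descent lemma (Lemma~\ref{lem:md_lemma}) to each of the four update steps of Algorithm~\ref{algo:general_lpd}, with test points $x\in\cX$, $y\in\cY$, $u$, $v$: for the $x$-update the strongly convex ``objective'' $F$ is $(\mu_x/2)\|\cdot\|^2$, which is $\mu_x$-relatively strongly convex w.r.t.\ the Euclidean d.g.f.; for the $y$-update it is $(\mu_y/2)\|\cdot\|^2$ with parameter $\mu_y$; for the $u$-update it is $\uf^*$, which is $1$-relatively strongly convex w.r.t.\ its own Bregman divergence $V^{\uf^*}$ and also $1/(L_x-\mu_x)$-strongly convex in norm; and for the $v$-update it is $\uh^*$, analogously. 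Adding the four inequalities and using the exact first-order identities for the (bi)linear terms $\Ip{u}{x}$, $\Ip{y}{Ax}$, $\Ip{v}{y}$, the left-hand side becomes precisely $\Phi(x_{k+1},y;u,v_{k+1})-\Phi(x,y_{k+1};u_{k+1},v)$ plus ``extrapolation error'' terms that appear only because the $x$-update uses $A^{\!\top}\ty_{k+1}+\tu_{k+1}$ rather than $A^{\!\top}y_{k+1}+u_{k+1}$ and the $y$-update uses $A\tx_{k+1}-\tv_{k+1}$ rather than $Ax_{k+1}-v_{k+1}$; these errors are $\Ip{A^{\!\top}(\ty_{k+1}-y_{k+1})}{x_{k+1}-x}$, $\Ip{\tu_{k+1}-u_{k+1}}{x_{k+1}-x}$, $-\Ip{A(\tx_{k+1}-x_{k+1})}{y_{k+1}-y}$, and $\Ip{\tv_{k+1}-v_{k+1}}{y_{k+1}-y}$ (up to sign). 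Note the $\Id$-couplings $\Ip{u}{x}$ and $\Ip{v}{y}$ produce only a one-sided error since the $u$- and $v$-updates already use the fresh iterates $x_{k+1},y_{k+1}$.

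Next I substitute the extrapolation identity $\widetilde{w}_{k+1}=w_k+\theta_k(w_k-w_{k-1})$ for each $w\in\{x,y,u,v\}$, e.g.\ $\ty_{k+1}-y_{k+1}=-(y_{k+1}-y_k)+\theta_k(y_k-y_{k-1})$, splitting every error term into a ``difference'' piece $\pm\Ip{\cdot\,(w_{k+1}-w_k)}{\cdot}$ and a $\theta_k\Ip{\cdot\,(w_k-w_{k-1})}{\cdot}$ piece. After multiplying the $k$-th inequality by $\lambda_k$ and using $\lambda_{k-1}=\theta_k\lambda_k$, the difference piece pairs with the previous-iterate part of the $\theta_k$ piece and telescopes over $k$, leaving only a boundary term at $k=K$ (the $k=-1$ boundary vanishes because Algorithm~\ref{algo:general_lpd} initializes $x_{-1}=x_0,y_{-1}=y_0,u_{-1}=u_0,v_{-1}=v_0$); the leftover cross piece $\lambda_{k-1}\Ip{\cdot\,(w_k-w_{k-1})}{x_{k+1}-x_k}$ or $\lambda_{k-1}\Ip{\cdot\,(w_k-w_{k-1})}{y_{k+1}-y_k}$ is bounded by Cauchy--Schwarz with a free parameter. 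The squared-increment pieces $\|x_{k+1}-x_k\|^2$, $\|y_{k+1}-y_k\|^2$ (with coefficients $\theta_k\|A\|/\alpha_{y,k}$, $\theta_k/\alpha_{u,k}$ and $\theta_k\|A\|/\alpha_x$, $\theta_k/\alpha_v$, plus $\|A\|\alpha_{x,k+1}$, $\|A\|\alpha_{y,k+1}$ from the boundary terms) are absorbed into the negative movement terms $-\tfrac{\lambda_k}{2\eta_{x,k}}\|x_{k+1}-x_k\|^2$, $-\tfrac{\lambda_k}{2\eta_{y,k}}\|y_{k+1}-y_k\|^2$ and the leftover $x$/$y$ proximal mass at index $K$ --- this is exactly what \eqref{eq:smooth_scsc_primal_dual_alpha1}--\eqref{eq:smooth_scsc_primal_dual_alpha2} are designed to make possible, leaving behind precisely the negative $\|A\|\alpha_{x,K+1}$, $\|A\|\alpha_{y,K+1}$ terms claimed. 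The pieces $\|y_k-y_{k-1}\|^2$, $\|u_k-u_{k-1}\|^2$, $\|v_k-v_{k-1}\|^2$ are absorbed by $-\tfrac{\lambda_{k-1}}{\eta_{y,k-1}}V^{s}_{y_{k-1}}(y_k)$, $-\tfrac{\lambda_{k-1}}{\eta_{u,k-1}}V^{\uf^*}_{u_{k-1}}(u_k)$, $-\tfrac{\lambda_{k-1}}{\eta_{v,k-1}}V^{\uh^*}_{v_{k-1}}(v_k)$ from the previous iteration, using $V^{\uf^*}_{u_{k-1}}(u_k)\ge\tfrac1{2(L_x-\mu_x)}\|u_k-u_{k-1}\|^2$ and its counterpart (Lemmas~\ref{lem:dual_bregman_div} and~\ref{lem:breg_props}), which is exactly where the conditions $\alpha_u\le 1/(\eta_{u,k}(L_x-\mu_x))$ and $\alpha_v\le 1/(\eta_{v,k}(L_y-\mu_y))$ enter.

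Finally, summing the $\lambda_k$-weighted per-iteration inequalities over $k=0,\dots,K-1$, the ``proximal-to-proximal'' terms $\lambda_k[\tfrac1{2\eta_{x,k}}\|x-x_k\|^2-(\tfrac1{2\eta_{x,k}}+\tfrac{\mu_x}{2})\|x-x_{k+1}\|^2]$, the analogous $y$-terms, and the $u$- and $v$-Bregman test-point terms all telescope because condition~\eqref{eq:smooth_scsc_primal_dual_lambda} is exactly the inequality $\lambda_{k+1}/(2\eta_{x,k+1})\le\lambda_k(\tfrac1{2\eta_{x,k}}+\tfrac{\mu_x}{2})$ and its three counterparts (with $1$ in place of the strong-convexity constant for $u,v$); what survives is the $\lambda_0$-weighted initial term and the $\lambda_K$-weighted final negative term for each variable. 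Combining with the telescoped coupling errors, absorbing the boundary $\|x-x_K\|^2$, $\|y-y_K\|^2$ pieces into the leftover proximal mass at index $K$ (again via \eqref{eq:smooth_scsc_primal_dual_alpha1}--\eqref{eq:smooth_scsc_primal_dual_alpha2} evaluated at $k=K$), and discarding the remaining nonpositive increment terms, gives exactly the stated bound.

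The conceptual content is a routine extension of the Chambolle--Pock/proximal-point argument, so the real difficulty is organizational: with four coupled variables, three bilinear pairs (two one-sided), and a telescoping part plus a boundary part for each error, one must check that every squared-increment term produced by Cauchy--Schwarz is matched against an available negative term carrying the correct index --- the off-by-one between step $k$ and step $k-1$ in the movement-term absorptions, and the fact that $-\tfrac{\lambda_k}{2\eta_{y,k}}\|y_{k+1}-y_k\|^2$ must simultaneously absorb cross terms coming from three different error terms (appearing at indices $k$ and $k+1$), are the subtle points. Identifying the Cauchy--Schwarz free parameters with $\alpha_{x,k},\alpha_{y,k},\alpha_{u,k},\alpha_{v,k},\alpha_x,\alpha_u,\alpha_v$ so that \eqref{eq:smooth_scsc_primal_dual_alpha1}--\eqref{eq:smooth_scsc_primal_dual_alpha2} become precisely the required inequalities is the crux. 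A minor secondary point is that, as in the proof of Lemma~\ref{lem:sc_agd_update}, one must fix consistent subgradients of $\uf^*$ and $\uh^*$ at the iterates $u_k,v_k$ so that the divergences $V^{\uf^*}_{u_k}(\cdot)$, $V^{\uh^*}_{v_k}(\cdot)$ in Algorithm~\ref{algo:general_lpd} are well-defined.
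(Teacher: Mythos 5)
Your proposal follows essentially the same route as the paper's proof: apply the mirror-descent lemma (Lemma~\ref{lem:md_lemma}) to each of the four updates of Algorithm~\ref{algo:general_lpd}, decompose the extrapolation errors via $\widetilde{w}_{k+1}=w_k+\theta_k(w_k-w_{k-1})$ into telescoping inner products and Cauchy--Schwarz squared increments with the free parameters $\alpha_{\cdot,k}$, absorb these using conditions \eqref{eq:smooth_scsc_primal_dual_alpha1}--\eqref{eq:smooth_scsc_primal_dual_alpha2} and the strong convexity of $\uf^*,\uh^*$, multiply by $\lambda_k$ and telescope using $\lambda_{k-1}=\theta_k\lambda_k$ and \eqref{eq:smooth_scsc_primal_dual_lambda}, and finally handle the boundary terms at index $K$ by one more Cauchy--Schwarz and the $k=K$ conditions. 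The argument, including the mechanics of the absorptions and the role of each assumption, matches the paper's proof of Theorem~\ref{thm:smooth_lpd_primal_dual}.
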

Note that proof of Theorem \ref{thm:smooth_lpd_primal_dual} closely follows the steps used in the proof of Lemma \ref{lem:ppm_rule}.
\begin{proof}
Let $x \in \cX$ and $y \in \cY$. Using Steps \ref{algo_line:x_update} and \ref{algo_line:y_update} (Algorithm \ref{algo:general_lpd}) and Lemma \ref{lem:md_lemma} twice---once with $g=A^\top \ty_{k+1} + \tu_{k+1}$, $F = (\mu_x/2)\|\cdot\|^2 + F$ and $r = \|\cdot\|^2/2$, and second time with $g=-A \tx_{k+1} + \tv_{k+1}$, $F = (\mu_y/2)\|\cdot\|^2 + H$ and $r = \|\cdot\|^2/2$---we get
\begin{align}
\Ip{A^\top \ty_{k+1} + \tu_{k+1}}{x_{k+1} - x} + &\frac{\mu_x}2 (\|x_{k+1}\|^2 - \|x_{}\|^2) + F(x_{k+1}) - F(x) \nonumber \\
&\leq \frac1{2\eta_{x,k}}(\|x-x_{k}\|^2 - \|x-x_{k+1}\|^2 - \|x_{k+1}-x_{k}\|^2)  - \frac{\mu_x}{2}\|x-x_{k+1}\|^2 \\
\Ip{-A \tx_{k+1} + \tv_{k+1}}{y_{k+1} - x} + &\frac{\mu_y}2 (\|y_{k+1}\|^2  - \|y_{}\|^2) + H(y_{k+1}) - H(y) \nonumber \\
&\leq \frac1{2\eta_{y,k}}(\|y-y_{k}\|^2 - \|y-y_{k+1}\|^2 - \|y_{k+1}-y_{k}\|^2) - \frac{\mu_y}{2}\|y-y_{k+1}\|^2
\end{align}
Note that $\uf^*$ and $\uh^*$ are $1$-strong convex w.r.t themselves.
Again using Step \ref{algo_line:u_update} (Algorithm \ref{algo:general_lpd}) and Lemma \ref{lem:md_lemma} twice---once with $g=-x_{k+1}$, $F = \uf^*$ and $r = \uf^*$, and second time time with $g=-y_{k+1}$, $F = \uh^*$ and $r = \uh^*$---we get
\begin{align}
\Ip{-x_{k+1}}{u_{k+1} - u} + \uf^*(u_{k+1}) - \uf^*(u) &\leq \frac1{\eta_{u,k}}(V^{\uf^*}_{u_k}(u) - V^{\uf^*}_{u_{k+1}}(u) - V^{\uf^*}_{u_{k}}(u_{k+1})) - V^{\uf^*}_{u_{k+1}}(u) \\
\Ip{y_{k+1}}{v_{k+1} - v} + \uh^*(v_{k+1}) - \uh^*(v) &\leq \frac1{\eta_{v,k}}(V^{\uh^*}_{v_{k}}(v) - V^{\uh^*}_{v_{k+1}}(v) - V^{\uh^*}_{v_{k}}(v_{k+1})) - V^{\uh^*}_{v_{k+1}}(v)
\end{align}
Adding the above four equations and using the definition $\gap_{z,w}(z_{k+1}, w_{k+1}) = \Phi(x_{k+1}, y; u, v_{k+1}) - \Phi(x, y_{k+1}; u_{k+1}, v)$, where $z = (x,y)$ and $w=(u,v)$
\begin{align}
\gap_{z,w}(z_{k+1}, w_{k+1}) \;=\; &\Phi(x_{k+1}, y; u, v_{k+1}) - \Phi(x, y_{k+1}, u_{k+1}, v) \nonumber \\
\;\leq\; &\frac1{2\eta_{x,k}}\|x-x_{k}\|^2 - (\frac1{2\eta_{x,k}} + \frac{\mu_x}{2})\|x-x_{k+1}\|^2 - \frac1{2\eta_{x,k}} \|x_{k+1}-x_{k}\|^2 \;+ \nonumber \\
&\frac1{2\eta_{y,k}}\|y-y_{k}\|^2 -  (\frac1{2\eta_{y,k}} + \frac{\mu_y}{2}) \|y-y_{k+1}\|^2 - \frac1{2\eta_{y,k}} \|y_{k+1}-y_{k}\|^2 \;+ \nonumber \\
&\frac1{\eta_{u,k}}V^{\uf^*}_{u_k}(u) - (\frac1{\eta_{u,k}}+1)V^{\uf^*}_{u_{k+1}}(u) - \frac1{\eta_{u,k}} V^{\uf^*}_{u_{k}}(u_{k+1}) \;+ \nonumber \\
&\frac1{\eta_{v,k}}(V^{\uh^*}_{v_{k}}(v) - (\frac1{\eta_{v,k}} +1)V^{\uh^*}_{v_{k+1}}(v) - \frac1{\eta_{v,k}}V^{\uh^*}_{v_{k}}(v_{k+1}) \;+ \nonumber \\
&\Ip{y_{k+1} - \ty_{k+1}}{A(x_{k+1} - x)} \;+ %
-\Ip{y_{k+1} - y}{A(x_{k+1} - \tx_{k+1})} \;+ \nonumber \\
&\Ip{u_{k+1} - \tu_{k+1}}{x_{k+1} - x} \;+ 
\Ip{v_{k+1} - \tv_{k+1}}{y_{k+1} - y} 
\label{eq:scsc_lpd_pf_eq1}
\end{align}
We can further expand out the last four term in the above inequality as follows. Using Step \ref{algo_line:extrapolate} (Algorithm \ref{algo:general_lpd}) and Cauchy-Schwarz inequality we get
\begin{align}
\Ip{y_{k+1} - \ty_{k+1}}{A(x_{k+1} - x)} &= \theta_k \Ip{y_{k-1} - y_{k}}{A(x_{k} - x)} - \Ip{y_{k} - y_{k+1}}{A(x_{k+1} - x)} \;+ \nonumber \\
&\;\;\;\;\;\; \theta_k \Ip{y_{k-1} - y_{k}}{A(x_{k+1} - x_{k})} \nonumber \\
&\leq \theta_k \Ip{y_{k-1} - y_{k}}{A(x_{k} - x)} - \Ip{y_{k} - y_{k+1}}{A(x_{k+1} - x)} \;+ \nonumber \\
&\;\;\;\;\;\; \frac{\theta_k \|A\| \alpha_{y,k}}2 \|y_{k-1} - y_{k}\|^2 + \frac{\theta_k \|A\| }{2\alpha_{y,k}} \|x_{k+1} - x_{k}\|^2
\label{eq:scsc_lpd_pf_eq2a}
\end{align}
for some $\alpha_{y,k} \geq 0$. Similarly we can show that
\begin{align}
-\Ip{y_{k+1} - y}{A(x_{k+1} - \tx_{k+1})}
&\leq -\theta_k \Ip{y_{k} - y}{A(x_{k-1} - x_{k})} + \Ip{y_{k+1} - y}{A(x_{k} - x_{k+1})} \;+ \nonumber \\
&\;\;\;\;\;\; \frac{\theta_k \|A\| \alpha_{x,k}}2 \|x_{k-1} - x_{k}\|^2 + \frac{\theta_k \|A\| }{2\alpha_{x,k}} \|y_{k+1} - y_{k}\|^2 \label{eq:scsc_lpd_pf_eq2b}
\\
\Ip{u_{k+1} - \tu_{k+1}}{x_{k+1} - x}
&\leq \theta_k \Ip{u_{k-1} - u_{k}}{x_{k} - x} - \Ip{u_{k} - u_{k+1}}{x_{k+1} - x} \;+ \nonumber \\
&\;\;\;\;\;\; \frac{\theta_k \alpha_{u,k}}2 \|u_{k-1} - u_{k}\|^2 + \frac{\theta_k }{2\alpha_{u,k}} \|x_{k+1} - x_{k}\|^2 \label{eq:scsc_lpd_pf_eq2c}
\\
\Ip{v_{k+1} - \tv_{k+1}}{y_{k+1} - y} 
&\leq \theta_k \Ip{v_{k-1} - v_{k}}{y_{k} - y} - \Ip{v_{k} - v_{k+1}}{y_{k+1} - y} \;+ \nonumber \\
&\;\;\;\;\;\; \frac{\theta_k \alpha_{v,k}}2 \|v_{k-1} - v_{k}\|^2 + \frac{\theta_k }{2\alpha_{v,k}} \|y_{k+1} - y_{k}\|^2 \label{eq:scsc_lpd_pf_eq2d}
\end{align}
for some $\alpha_{x,k} \geq 0$, $\alpha_{u,k} \geq 0$, and $\alpha_{v,k} \geq 0$.
Using Lemma \ref{lem:breg_props}(a) and $1/(L_x - \mu_x)$- and $1/(L_y - \mu_y)$-strong convexity of $\uf^*$ and $\uh^*$, respectively we get that
\begin{align}
- \frac{1}{\eta_{u,k}} V^{\uf^*}_{u_{k}}(u_{k+1})\leq -\frac{1}{2\eta_{u,k} (L_x - \mu_x)} \|u_{k+1} - u_{k}\|^2 \label{eq:scsc_lpd_pf_eq3a} \\
- \frac{1}{\eta_{v,k}} V^{\uh^*}_{v_{k}}(v_{k+1})\leq -\frac{1}{2\eta_{v,k} (L_y - \mu_y)} \|v_{k+1} - v_{k}\|^2 \label{eq:scsc_lpd_pf_eq3b}
\end{align}

Summing equations \eqref{eq:scsc_lpd_pf_eq1}, \eqref{eq:scsc_lpd_pf_eq2a}, \eqref{eq:scsc_lpd_pf_eq2b}, \eqref{eq:scsc_lpd_pf_eq2c}, \eqref{eq:scsc_lpd_pf_eq2d}, \eqref{eq:scsc_lpd_pf_eq3a}, and \eqref{eq:scsc_lpd_pf_eq3b} up we get
\begin{align}
\gap_{z,w}(z_{k+1}, w_{k+1})
\;\leq\; &\frac{1}{2\eta_{x,k}}\|x-x_{k}\|^2 - (\frac1{2\eta_{x,k}} + \frac{\mu_x}{2}) \|x-x_{k+1}\|^2 +
\frac{1}{2\eta_{y,k}}\|y-y_{k}\|^2 - (\frac1{2\eta_{y,k}} + \frac{\mu_y}{2}) \|y-y_{k+1}\|^2 \;+ \nonumber \\
&\frac{1}{\eta_{u,k}}V^{\uf^*}_{u_k}(u) - (\frac1{\eta_{u,k}} + 1) V^{\uf^*}_{u_{k+1}}(u) +
\frac{1}{\eta_{v,k}}V^{\uh^*}_{v_{k}}(v) - (\frac1{\eta_{v,k}} + 1) V^{\uh^*}_{v_{k+1}}(v) \;+ \nonumber \\
&\theta_k \Ip{y_{k-1} - y_{k}}{A(x_{k} - x)} - \Ip{y_{k} - y_{k+1}}{A(x_{k+1} - x)} \;+ \nonumber \\
&-\theta_k \Ip{y_{k} - y}{A(x_{k-1} - x_{k})} + \Ip{y_{k+1} - y}{A(x_{k} - x_{k+1})} \;+ \nonumber \\
&\theta_k \Ip{u_{k-1} - u_{k}}{x_{k} - x} - \Ip{u_{k} - u_{k+1}}{x_{k+1} - x} \;+ \nonumber \\
&\theta_k \Ip{v_{k-1} - v_{k}}{y_{k} - y} - \Ip{v_{k} - v_{k+1}}{y_{k+1} - y} \;+ \nonumber \\
&\theta_k \frac{\|A\| \alpha_{x,k}}2 \|x_{k} - x_{k-1}\|^2 - (\frac{1}{2\eta_{x,k}} - \theta_k (\frac{\|A\| }{2\alpha_{y,k}} + \frac{1}{2\alpha_{u,k}})) \|x_{k+1} - x_{k}\|^2 \;+ \nonumber \\
&\theta_k \frac{\|A\| \alpha_{y,k}}2 \|y_{k} - y_{k-1}\|^2 - (\frac{1}{2\eta_{y,k}} - \theta_k (\frac{\|A\| }{2\alpha_{x,k}} + \frac{1 }{2\alpha_{v,k}})) \|y_{k+1} - y_{k}\|^2 \;+ \nonumber \\
&\theta_k \frac{\alpha_{u,k}}2 \|u_{k} - u_{k-1}\|^2 -\frac{1}{2\eta_{u,k} (L_x - \mu_x)} \|u_{k+1} - u_{k}\|^2 \;+ \nonumber \\
&\theta_k \frac{\alpha_{v,k}}2 \|v_{k} - v_{k-1}\|^2 -\frac{1}{2\eta_{v,k} (L_y - \mu_y)} \|v_{k+1} - v_{k}\|^2
\end{align}
Assuming
$\theta_k(\frac{\|A\| }{2\alpha_{y,k}} + \frac{1}{2\alpha_{u,k}}) + \frac{\|A\| \alpha_{x,k+1}}2 \leq \frac{1}{2\eta_{x,k}}$,
$\theta_k(\frac{\|A\| }{2\alpha_{x,k}} + \frac{1}{2\alpha_{v,k}}) + \frac{\|A\| \alpha_{y,k+1}}2 \leq \frac{1}{2\eta_{y,k}}$,
$\alpha_{u,k} \leq \frac1{\eta_{u,k+1} (L_x-\mu_x)}$, and $\alpha_{v,k} \leq \frac1{\eta_{v,k+1} (L_y-\mu_y)}$ we get
\begin{align}
\gap_{z,w}(z_{k+1}, w_{k+1})
\;\leq\; &\frac{1}{2\eta_{x,k}}\|x-x_{k}\|^2 - (\frac1{2\eta_{x,k}} + \frac{\mu_x}{2}) \|x-x_{k+1}\|^2 +
\frac{1}{2\eta_{y,k}}\|y-y_{k}\|^2 - (\frac1{2\eta_{y,k}} + \frac{\mu_y}{2}) \|y-y_{k+1}\|^2 \;+ \nonumber \\
&\frac{1}{\eta_{u,k}}V^{\uf^*}_{u_k}(u) - (\frac1{\eta_{u,k}} + 1) V^{\uf^*}_{u_{k+1}}(u) +
\frac{1}{\eta_{v,k}}V^{\uh^*}_{v_{k}}(v) - (\frac1{\eta_{v,k}} + 1) V^{\uh^*}_{v_{k+1}}(v) \;+ \nonumber \\
&\theta_k \Ip{y_{k-1} - y_{k}}{A(x_{k} - x)} - \Ip{y_{k} - y_{k+1}}{A(x_{k+1} - x)} \;+ \nonumber \\
&-\theta_k \Ip{y_{k} - y}{A(x_{k-1} - x_{k})} + \Ip{y_{k+1} - y}{A(x_{k} - x_{k+1})} \;+ \nonumber \\
&\theta_k \Ip{u_{k-1} - u_{k}}{x_{k} - x} - \Ip{u_{k} - u_{k+1}}{x_{k+1} - x} \;+ \nonumber \\
&\theta_k \Ip{v_{k-1} - v_{k}}{y_{k} - y} - \Ip{v_{k} - v_{k+1}}{y_{k+1} - y} \;+ \nonumber \\
&\theta_k \frac{\|A\| \alpha_{x,k}}2 \|x_{k} - x_{k-1}\|^2 - \frac{\|A\| \alpha_{x,k+1}}2 \|x_{k+1} - x_{k}\|^2 \;+ \nonumber \\
&\theta_k \frac{\|A\| \alpha_{y,k}}2 \|y_{k} - y_{k-1}\|^2 - \frac{\|A\| \alpha_{y,k+1}}2 \|y_{k+1} - y_{k}\|^2 \;+ \nonumber \\
&\theta_k \frac{\alpha_{u,k}}2 \|u_{k} - u_{k-1}\|^2 - \frac{\alpha_{u,k+1}}2 \|u_{k+1} - u_{k}\|^2 \;+ 
\nonumber \\
&\theta_k \frac{\alpha_{v,k}}2 \|v_{k} - v_{k-1}\|^2 - \frac{\alpha_{v,k+1}}2 \|v_{k+1} - v_{k}\|^2
\end{align}
Multiplying both sides with $\lambda_k$, and using $\theta_k \lambda_k = \lambda_{k-1}$ and
\begin{align}
\frac{\lambda_{k+1}}{\lambda_{k}} \leq \min \bigg( \frac{\eta_{x,k+1}(1+\eta_{x,k} \mu_x)}{\eta_{x,k}} \,,\; \frac{\eta_{y,k+1}(1+\eta_{y,k} \mu_x)}{\eta_{x,k}} \,,\;
\frac{\eta_{u,k+1}(1+\eta_{u,k} \mu_x)}{\eta_{x,k}} \,,\;
\frac{\eta_{v,k+1}(1+\eta_{v,k} \mu_x)}{\eta_{x,k}} \bigg)
\end{align}
we get
\begin{align}
\lambda_k \gap_{z,w}(z_{k+1}, w_{k+1})
\;\leq\; &\frac{\lambda_k}{2\eta_{x,k+1}}\|x-x_{k}\|^2 - \frac{\lambda_{k+1}}{2\eta_{x,k+1}} \|x-x_{k+1}\|^2 +
\frac{\lambda_k }{2\eta_{y,k}}\|y-y_{k}\|^2 - \frac{\lambda_{k+1}}{2\eta_{y,k+1}} \|y-y_{k+1}\|^2 \;+ \nonumber \\
&\frac{\lambda_k}{\eta_{u,k}}V^{\uf^*}_{u_k}(u) - \frac{\lambda_{k+1}}{\eta_{u,k+1}}V^{\uf^*}_{u_{k+1}}(u) +
\frac{\lambda_k}{\eta_{v,k}}V^{\uh^*}_{v_{k}}(v) - \frac{\lambda_{k+1}}{\eta_{v,k+1}} V^{\uh^*}_{v_{k+1}}(v) \;+ \nonumber \\
&\lambda_{k-1} \Ip{y_{k-1} - y_{k}}{A(x_{k} - x)} - \lambda_k \Ip{y_{k} - y_{k+1}}{A(x_{k+1} - x)} \;+ \nonumber \\
&-\lambda_{k-1} \Ip{y_{k} - y}{A(x_{k-1} - x_{k})} + \lambda_k \Ip{y_{k+1} - y}{A(x_{k} - x_{k+1})} \;+ \nonumber \\
&\lambda_{k-1} \Ip{u_{k-1} - u_{k}}{x_{k} - x} - \lambda_k \Ip{u_{k} - u_{k+1}}{x_{k+1} - x} \;+ \nonumber \\
&\lambda_{k-1} \Ip{v_{k-1} - v_{k}}{y_{k} - y} - \lambda_k \Ip{v_{k} - v_{k+1}}{y_{k+1} - y} \;+ \nonumber \\
&\lambda_{k-1} \frac{\|A\| \alpha_{x,k}}2 \|x_{k} - x_{k-1}\|^2 - \lambda_k \frac{\|A\| \alpha_{x,k+1}}2 \|x_{k+1} - x_{k}\|^2 \;+ \nonumber \\
&\lambda_{k-1} \frac{\|A\| \alpha_{y,k}}2 \|y_{k} - y_{k-1}\|^2 - \lambda_k \frac{\|A\| \alpha_{y,k+1}}2 \|y_{k+1} - y_{k}\|^2 \;+ \nonumber \\
&\lambda_{k-1} \frac{\alpha_{u,k}}2 \|u_{k} - u_{k-1}\|^2 - \lambda_k \frac{\alpha_{u,k+1}}2 \|u_{k+1} - u_{k}\|^2 \;+ \nonumber \\
&\lambda_{k-1} \frac{\alpha_{v,k}}2 \|v_{k} - v_{k-1}\|^2 - \lambda_k \frac{\alpha_{v,k+1}}2 \|v_{k+1} - v_{k}\|^2
\end{align}
Summing the iterations of the above inequality for $k=0,\ldots,K-1$ and without loss of generality setting $\lambda_{-1} = 0$, or $x_{-1} = x_0$, $y_{-1} = y_0$, $u_{-1} = u_0$, and $v_{-1} = v_0$
we get
\begin{align}
\sum_{k=0}^{K-1} \lambda_k \gap_{z,w}(z_{k+1}, w_{k+1})
\;\leq\; &\frac{\lambda_0}{2\eta_{x,0}}\|x-x_{0}\|^2 - \frac{\lambda_{K}}{2\eta_{x,K}} \|x-x_{K}\|^2 +
\frac{\lambda_0 }{2\eta_{y,0}}\|y-y_{0}\|^2 - \frac{\lambda_{K}}{2\eta_{y,K}} \|y-y_{K}\|^2 \;+ \nonumber \\
&\frac{\lambda_0}{\eta_{u,0}}V^{\uf^*}_{u_0}(u) - \frac{\lambda_{K}}{\eta_{u,K}}V^{\uf^*}_{u_{K}}(u) +
\frac{\lambda_0}{\eta_{v,0}}V^{\uh^*}_{v_{0}}(v) - \frac{\lambda_{K}}{\eta_{v,K}} V^{\uh^*}_{v_{K}}(v) \;+ \nonumber \\
&- \lambda_{K-1} \Ip{y_{K-1} - y_{K}}{A(x_{K} - x)} %
+ \lambda_{K-1} \Ip{y_{K} - y}{A(x_{K-1} - x_{K})} \;+ \nonumber \\
&-\lambda_{K-1} \Ip{u_{K-1} - u_{K}}{x_{K} - x}  %
-\lambda_{K-1} \Ip{v_{K-1} - v_{K}}{y_{K} - y} \;+ \nonumber \\
&- \lambda_{K-1} \frac{\|A\| \alpha_{x,K}}2 \|x_{K} - x_{K-1}\|^2 
- \lambda_{K-1} \frac{\|A\| \alpha_{y,K}}2 \|y_{K} - y_{K-1}\|^2
\;+ \nonumber \\
&- \lambda_{K-1} \frac{\alpha_{u,K}}2 \|u_{K} - u_{K-1}\|^2
- \lambda_{K-1} \frac{\alpha_{v,K}}2 \|v_{K} - v_{K-1}\|^2  \label{eq:scsc_lpd_pf_eq4}
\end{align}

Using Cauchy-Schwarz inequality we can show that
\begin{align}
-\lambda_{K-1} \Ip{y_{K-1} - y_{K}}{A(x_{K} - x)} \leq 
\frac{\lambda_{K-1} \|A\|\alpha_{y,K}}{2} \|y_{K-1} - y_{K}\|^2 + \frac{\lambda_{K-1} \|A\|}{2\alpha_{y,K}} \|x_{K} - x\|^2 \label{eq:scsc_lpd_pf_eq5a} \\
\lambda_{K-1} \Ip{y_{K} - y}{A(x_{K-1} - x_{K})} \leq \frac{\lambda_{K-1} \|A\| \alpha_{x,K}}2 \|x_{K-1} - x_{K}\|^2 + \frac{\lambda_{K-1} \|A\|}{2\alpha_{x,K}} \|y_{K} - y\|^2 \label{eq:scsc_lpd_pf_eq5b} \\
-\lambda_{K-1} \Ip{u_{K-1} - u_{K}}{x_{K} - x} \leq 
\frac{\lambda_{K-1} \alpha_{u,K}}2 \|u_{K-1} - u_{K}\|^2 + \frac{\lambda_{K-1} \|A\|}{2\alpha_{u,K}} \|x_{K} - x\|^2 \label{eq:scsc_lpd_pf_eq5c} \\
-\lambda_{K-1} \Ip{v_{K-1} - v_{K}}{y_{K} - y} \leq 
\frac{\lambda_{K-1} \alpha_{v,K}}2 \|v_{K-1} - v_{K}\|^2 + \frac{\lambda_{K-1} \|A\|}{2\alpha_{v,K}} \|y_{K} - y\|^2 \label{eq:scsc_lpd_pf_eq5d}
\end{align}

Summing equations \eqref{eq:scsc_lpd_pf_eq4}, \eqref{eq:scsc_lpd_pf_eq5a}, \eqref{eq:scsc_lpd_pf_eq5b}, \eqref{eq:scsc_lpd_pf_eq5c}, and \eqref{eq:scsc_lpd_pf_eq5d} and then using 
$\theta_K \lambda_K = \lambda_{K-1}$, $\theta_K(\frac{\|A\| }{2\alpha_{y,K}} + \frac{1}{2\alpha_{u,K}}) + \frac{\|A\| \alpha_{x,K+1}}2 \leq \frac{1}{2\eta_{x,K}}$, and
$\theta_K(\frac{\|A\| }{2\alpha_{x,K}} + \frac{1}{2\alpha_{v,K}}) + \frac{\|A\| \alpha_{y,K+1}}2 \leq \frac{1}{2\eta_{y,K}}$,
we get
\begin{align}
\sum_{k=0}^{K-1} \lambda_k \gap_{z,w}(z_{k+1}, w_{k+1})
\;\leq\; &\frac{\lambda_0}{2\eta_{x,0}}\|x-x_{0}\|^2 - \frac{\lambda_{K} \|A\| \alpha_{x,K+1}}{2} \|x-x_{K}\|^2 +
\frac{\lambda_0 }{2\eta_{y,0}}\|y-y_{0}\|^2 - \frac{\lambda_{K} \|A\| \alpha_{y,K+1}}{2} \|y-y_{K}\|^2 \;+ \nonumber \\
&\frac{\lambda_0}{\eta_{u,0}}V^{\uf^*}_{u_0}(u) - \frac{\lambda_{K}}{\eta_{u,K}}V^{\uf^*}_{u_{K}}(u) +
\frac{\lambda_0}{\eta_{v,0}}V^{\uh^*}_{v_{0}}(v) - \frac{\lambda_{K}}{\eta_{v,K}} V^{\uh^*}_{v_{K}}(v)
\end{align}
\end{proof}

\section{Guarantee for Bi-SC-SC problem}
In this section we provide a guarantee for the output of Algorithm \ref{algo:lpd} in the Bi-SC-SC setting. We do this by specializing Theorem \ref{thm:smooth_lpd_primal_dual} to this case.
\begin{corollary}[Formal version of Theorem \ref{thm:smooth_scsc_primal_dual_informal}]
\label{cor:smooth_scsc_primal_dual}

Let $\overline{x}_0 = \overline{x}_{-1} = x_0$ and $\overline{y}_0 = \overline{y}_{-1} = y_0$. Additionally assume that $\eta_{x,k} = \eta_x$, $\eta_{y,k} = \eta_y$, $\eta_{u,k} = \eta_u$, $\eta_{v,k} = \eta_v$, and $\theta_k = \theta$ for all $k=0,1,\ldots$. If we set
\begin{align}
\kappa &= \sqrt{\frac{L_x}{\mu_x}-1} + \frac{2\|A\|}{\sqrt{\mu_x \mu_y}}+ \sqrt{\frac{L_y}{\mu_y}-1} \text{ , and }\\
\eta_x = \frac1{\mu_x}(\sqrt{\frac{L_x}{\mu_x}-1} + \frac{2\|A\|}{\sqrt{\mu_x \mu_y}})^{-1} 
\,,\,
\eta_y &= \frac1{\mu_y}(\sqrt{\frac{L_y}{\mu_y}-1} + \frac{2\|A\|}{\sqrt{\mu_x \mu_y}})^{-1}
\,,\,
\eta_u = (\sqrt{\frac{L_x}{\mu_x}-1})^{-1} 
\,,\,
\eta_v = (\sqrt{\frac{L_y}{\mu_y}-1})^{-1}
\end{align}
then for any $K > 0$,
we can show that
\begin{align}
\frac{\|A\|}{\sqrt{\mu_x \mu_y}} \big(\frac{\mu_x}{2} &\|x^*-x_{K}\|^2 +
\frac{\mu_y}{2} \|y^*-y_{K}\|^2 \big) + %
\nonumber \\
\;\leq\;
&\exp(- \frac{(K-1)}{(\kappa+1)})\big((\frac{1}{2\eta_x}+\frac{L_x-\mu_x}{2\eta_u})\|x^*-x_{0}\|^2 + (\frac{1}{2\eta_y}+\frac{L_y-\mu_y}{2\eta_v})\|y^*-y_{0}\|^2 \big)
\end{align}
\end{corollary}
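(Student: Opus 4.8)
The plan is to obtain Corollary~\ref{cor:smooth_scsc_primal_dual} as a specialization of the general single-step estimate in Theorem~\ref{thm:smooth_lpd_primal_dual}. Since the stepsizes here are $k$-invariant, I would take all the free sequences in that theorem to be constant: $\lambda_k = \gamma^k$ with $\gamma = 1 + \kappa^{-1}$ and $\theta_k = 1/\gamma$ (so that $\lambda_{k-1} = \theta_k\lambda_k$), and $\alpha_{x,k} = \sqrt{\mu_x/\mu_y}$, $\alpha_{y,k} = \sqrt{\mu_y/\mu_x}$, $\alpha_{u,k} = (\mu_x(L_x-\mu_x))^{-1/2}$, $\alpha_{v,k} = (\mu_y(L_y-\mu_y))^{-1/2}$. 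Everything then reduces to checking the three hypotheses \eqref{eq:smooth_scsc_primal_dual_alpha1}--\eqref{eq:smooth_scsc_primal_dual_lambda} for these choices, invoking the resulting bound, and tidying up.

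The one identity driving the verification is $1/\eta_x = \mu_x(\sqrt{\kappa_x-1}+2\kappa_{xy}) = \sqrt{\mu_x(L_x-\mu_x)} + 2\|A\|\sqrt{\mu_x/\mu_y}$, together with $\|A\|/\alpha_{y,k} = \|A\|\sqrt{\mu_x/\mu_y} = \|A\|\alpha_{x,k}$ and $1/\alpha_{u,k} = \sqrt{\mu_x(L_x-\mu_x)}$. With these the left side of \eqref{eq:smooth_scsc_primal_dual_alpha1} equals $\theta\big(\|A\|\alpha_x + \sqrt{\mu_x(L_x-\mu_x)}\big) + \|A\|\alpha_x \le 2\|A\|\alpha_x + \sqrt{\mu_x(L_x-\mu_x)} = 1/\eta_x$ because $\theta \le 1$, and the second constraint $\alpha_u \le 1/(\eta_u(L_x-\mu_x))$ holds with equality by the choice $\eta_u = (\sqrt{\kappa_x-1})^{-1}$; inequality \eqref{eq:smooth_scsc_primal_dual_alpha2} is the mirror statement. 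For \eqref{eq:smooth_scsc_primal_dual_lambda}, constancy reduces its right side to $\min(1+\eta_x\mu_x,\,1+\eta_y\mu_y,\,1+\eta_u,\,1+\eta_v)$, and each factor dominates $\gamma = 1+\kappa^{-1}$ since $\eta_x\mu_x = (\sqrt{\kappa_x-1}+2\kappa_{xy})^{-1} \ge \kappa^{-1}$, $\eta_u = (\sqrt{\kappa_x-1})^{-1}\ge\kappa^{-1}$, and symmetrically for the $y$-quantities, using $\kappa = \sqrt{\kappa_x-1}+2\kappa_{xy}+\sqrt{\kappa_y-1}$.

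Next I would evaluate the conclusion of Theorem~\ref{thm:smooth_lpd_primal_dual} at $(x,y,u,v) = (x^*,y^*,u^*,v^*)$ with $u^* := \nabla\uf(x^*)$ and $v^* := \nabla\uh(y^*)$. The key observation is that $(x^*,v^*;y^*,u^*)$ is the saddle point of the lifted objective $\Phi$ (primal block $(x,v)$, dual block $(y,u)$): Fenchel duality (Lemma~\ref{lem:fenchel_props}(c)) gives $\phi(x,y) = \min_v\max_u\Phi(x,y;u,v)$ with optimizers $v = \nabla\uh(y)$, $u = \nabla\uf(x)$, and $(x^*,y^*)$ is the saddle of $\phi$, so Lemma~\ref{lem:minimax_pseudo_gap} applied to $\Phi$ yields $\Phi(x_{k+1},y^*;u^*,v_{k+1}) - \Phi(x^*,y_{k+1};u_{k+1},v^*) \ge 0$ for every $k$. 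Hence the left side of the Theorem~\ref{thm:smooth_lpd_primal_dual} estimate is nonnegative; discarding the nonnegative $V^{\uf^*}_{u_K}(u^*)$ and $V^{\uh^*}_{v_K}(v^*)$ terms and moving the remaining $K$-indexed negative terms to the left leaves $\gamma^K\big(\tfrac{\|A\|\alpha_x}{2}\|x^*-x_K\|^2 + \tfrac{\|A\|\alpha_y}{2}\|y^*-y_K\|^2\big)$, which equals $\gamma^K\,\kappa_{xy}\big(\tfrac{\mu_x}{2}\|x^*-x_K\|^2 + \tfrac{\mu_y}{2}\|y^*-y_K\|^2\big)$ since $\|A\|\alpha_x = \mu_x\kappa_{xy}$ and $\|A\|\alpha_y = \mu_y\kappa_{xy}$.

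Finally, on the right side $\lambda_0 = 1$, and by Lemma~\ref{lem:dual_bregman_div} applied with the initialization $u_0 = \nabla\uf(\ux_0) = \nabla\uf(x_0)$ and $v_0 = \nabla\uh(\uy_0) = \nabla\uh(y_0)$ (recall $\uf$ is $(L_x-\mu_x)$-smooth and $\uh$ is $(L_y-\mu_y)$-smooth) one gets $V^{\uf^*}_{u_0}(u^*) \le \tfrac{L_x-\mu_x}{2}\|x^*-x_0\|^2$ and $V^{\uh^*}_{v_0}(v^*) \le \tfrac{L_y-\mu_y}{2}\|y^*-y_0\|^2$, so the right side is at most $\big(\tfrac{1}{2\eta_x}+\tfrac{L_x-\mu_x}{2\eta_u}\big)\|x^*-x_0\|^2 + \big(\tfrac{1}{2\eta_y}+\tfrac{L_y-\mu_y}{2\eta_v}\big)\|y^*-y_0\|^2$. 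Dividing by $\gamma^K$ and using $\gamma^{-1} = 1 - (\kappa+1)^{-1} \le \exp(-(\kappa+1)^{-1})$, hence $\gamma^{-K} \le \exp(-K/(\kappa+1)) \le \exp(-(K-1)/(\kappa+1))$, yields the claimed inequality. The main obstacle is the bookkeeping in the middle step: choosing the auxiliary constants $\alpha_\bullet$ so that the bilinear cross-terms and the conjugate strong-convexity terms are absorbed simultaneously, and confirming that the extrapolation factor $\theta<1$ supplies exactly the needed slack in \eqref{eq:smooth_scsc_primal_dual_alpha1}--\eqref{eq:smooth_scsc_primal_dual_alpha2}; one should also take care to verify that $(x^*,v^*;y^*,u^*)$ is genuinely a saddle point of $\Phi$, not merely that $\Phi$ restricts to $\phi$ along $u=\nabla\uf(x)$, $v=\nabla\uh(y)$.
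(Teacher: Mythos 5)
Your proposal is correct and follows essentially the same route as the paper's own proof: specialize Theorem~\ref{thm:smooth_lpd_primal_dual} with $\lambda_k=\gamma^k$, $\theta=1/\gamma$, and the identical choices $\alpha_x=\sqrt{\mu_x/\mu_y}$, $\alpha_y=\sqrt{\mu_y/\mu_x}$, $\alpha_u=(\mu_x(L_x-\mu_x))^{-1/2}$, $\alpha_v=(\mu_y(L_y-\mu_y))^{-1/2}$, verify \eqref{eq:smooth_scsc_primal_dual_alpha1}--\eqref{eq:smooth_scsc_primal_dual_lambda}, evaluate at $(x^*,y^*,\nabla\uf(x^*),\nabla\uh(y^*))$ with Lemma~\ref{lem:minimax_pseudo_gap} for nonnegativity, bound the initial Bregman terms via Lemma~\ref{lem:dual_bregman_div}, and finish with $\gamma^{-K}\le\exp(-K/(\kappa+1))$. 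The only cosmetic difference is that you discard the terminal Bregman terms outright while the paper retains them on the left before dropping them, which changes nothing.
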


\begin{proof}
We will first verify the parameter choices satisfies the required conditions of Theorem \ref{thm:smooth_lpd_primal_dual} for some choice of $\lambda_k$, $\alpha_{x,k}$, $\alpha_{y,k}$, $\alpha_{u,k}$, $\alpha_{v,k}$ for $k=-1,0,1,\ldots$. 

Let $\lambda_k = \gamma^k$ and $\theta_k = 1/\gamma$ where 
\begin{align}
\gamma &= 1 + \kappa^{-1}\;,\;\; \kappa = \sqrt{\frac{L_x}{\mu_x}-1} + \frac{2\|A\|}{\sqrt{\mu_x \mu_y}}+ \sqrt{\frac{L_y}{\mu_y}-1}
\end{align}
Clearly $\lambda_{k-1} = \theta \lambda_k$.
Next we will verify \eqref{eq:smooth_scsc_primal_dual_lambda} which simplifies to the 
\begin{align}
\gamma \leq 1+\min(\eta_x \mu_x, \eta_y \mu_y, \eta_u, \eta_v)
\end{align}
under our choice of $\lambda_k$ and $k$ invariant stepsize choices.
It is easy to see that
\begin{align}
\gamma = 1 + (\sqrt{\frac{L_x-\mu_x}{\mu_x}} + \frac{2\|A\|}{\sqrt{\mu_x \mu_y}}+ \sqrt{\frac{L_y-\mu_y}{\mu_y}})^{-1} \leq 1+ (\sqrt{\frac{L_x-\mu_x}{\mu_x}} + \frac{2\|A\|}{\sqrt{\mu_x \mu_y}})^{-1} = 1 +\mu_x \eta_x
\end{align}
Similarly we can also show that $\gamma \leq 1+ \min(\eta_y \mu_y, \eta_u, \eta_v)$.

Let $\alpha_{x,k}$, $\alpha_{y,k}$, $\alpha_{u,k}$, $\alpha_{v,k}$ be invariant to $k$ and $\alpha_{x,k} = \sqrt{\frac{\mu_x}{\mu_y}}$, $\alpha_{y,k} = \sqrt{\frac{\mu_y}{\mu_x}}$, $\alpha_{u,k} = \frac1{\sqrt{({L_x-\mu_x}){\mu_x}}}$, $\alpha_{v,k} = \frac1{\sqrt{({L_y-\mu_x}){\mu_y}}}$ for all $k=0,1,\ldots$. 
Next we verify conditions \eqref{eq:smooth_scsc_primal_dual_alpha1} and \eqref{eq:smooth_scsc_primal_dual_alpha2}.
We can show that
\begin{align}
\alpha_{u,k} = \frac1{\sqrt{({L_x-\mu_x}){\mu_x}}} \leq \frac1{\sqrt{({L_x-\mu_x}){\mu_x}}} = \frac1{\eta_{u,k} (L_x-\mu_x)}
\end{align}
and
\begin{align}
&\theta_k (\frac{\|A\| }{\alpha_{y,k}} + \frac{1}{\alpha_{u,k}}) + {\|A\| \alpha_{x,k+1}} = \frac{\mu_x}{\gamma}(\frac{\|A\|}{\sqrt{\mu_x \mu_y}} + \sqrt{\frac{L_x}{\mu_x}-1}) + \mu_x \frac{\|A\|}{\sqrt{\mu_x \mu_y}} < \mu_x (\sqrt{\frac{L_x}{\mu_x}-1} + \frac{2\|A\|}{\sqrt{\mu_x \mu_y}})= \frac{1}{\eta_{x,k}} 
\end{align}
Similar we can also show that,
\begin{align}
&\theta_k (\frac{\|A\| }{\alpha_{x,k}} + \frac{1}{\alpha_{v,k}}) + {\|A\| \alpha_{y,k+1}} \leq \frac{1}{\eta_{y,k}}  \;,\;\;\;\;
\alpha_{v,k} \leq \frac1{\eta_{v,k} (L_y-\mu_y)}
\end{align}

Then according to Theorem \ref{thm:smooth_lpd_primal_dual}, for any $K \geq 0$, $x \in \cX$, $y  \in \cY$, $u$, $v$
\begin{align}
\sum_{k=0}^{K-1} \gamma^k  &[\Phi(x_{k+1}, y; u, v_{k+1}) - \Phi(x, y_{k+1}; u_{k+1}, v)] \nonumber \\
\;\leq\; &\frac{1}{2\eta_{x}}\|x-x_{0}\|^2 - \sqrt{\frac{\mu_x}{\mu_y}}\frac{\gamma^{K} \|A\|}{2} \|x-x_{K}\|^2 +
\frac{1}{2\eta_{y}}\|y-y_{0}\|^2 - \sqrt{\frac{\mu_y}{\mu_x}} \frac{\gamma^{K} \|A\|}{2} \|y-y_{K}\|^2 \;+ \nonumber \\
&\frac{1}{\eta_{u}}V^{\uf^*}_{u_0}(u) - \frac{\gamma^K}{\eta_{u}}V^{\uf^*}_{u_{K}}(u) +
\frac{1}{\eta_{v}}V^{\uh^*}_{v_{0}}(v) - \frac{\gamma^K}{\eta_{v}} V^{\uh^*}_{v_{K}}(v)
\label{eq:smooth_scsc_primal_dual_cor_eq1}
\end{align}

Setting $x = x^*$, $y = y^*$, $u=u^{*} = \nabla \uf(x^*) = \arg\min_u \Ip{x}{u} - \uf^*(u)$, $v=v^{*} = \nabla \uh(y^*) = \arg\min_v \Ip{y}{v} - \uh^*(v)$ in \eqref{eq:smooth_scsc_primal_dual_cor_eq1} we get
\begin{align}
\sum_{k=0}^{K-1} \gamma^k  &[\Phi(x_{k+1}, y^*; u^*, v_{k+1}) - \Phi(x^*, y_{k+1}; u_{k+1}, v^*)] \nonumber \\
\;\leq\; &\frac{1}{2\eta_{x}}\|x^*-x_{0}\|^2 - \sqrt{\frac{\mu_x}{\mu_y}}\frac{\gamma^{K} \|A\|}{2} \|x^*-x_{K}\|^2 +
\frac{1}{2\eta_{y}}\|y^*-y_{0}\|^2 - \sqrt{\frac{\mu_y}{\mu_x}} \frac{\gamma^{K} \|A\|}{2} \|y^*-y_{K}\|^2 \;+ \nonumber \\
&\frac{1}{\eta_{u}}V^{\uf^*}_{u_0}(u^*) - \frac{\gamma^K}{\eta_{u}}V^{\uf^*}_{u_{K}}(u^*) +
\frac{1}{\eta_{v}}V^{\uh^*}_{v_{0}}(v^*) - \frac{\gamma^K}{\eta_{v}} V^{\uh^*}_{v_{K}}(v^*)
\end{align}
Notice that the LHS above is positive, since by Lemma \ref{lem:minimax_pseudo_gap}, $\Phi(x_{k+1}, y^*; u, v_{k+1}) - \Phi(x^*, y_{k+1}; u_{k+1}, v)
\geq 0$ for all $k=0,1,\ldots$. Then using this fact and Lemma \ref{lem:dual_bregman_div} four times, we get that
\begin{align}
\frac{\|A\|}{\sqrt{\mu_x \mu_y}} \big(\frac{\mu_x}{2} &\|x^*-x_{K}\|^2 +
\frac{\mu_y}{2} \|y^*-y_{K}\|^2 \big) + \frac{1}{2\eta_{u}}\frac{\|\nabla \uf(x^*)
 - \nabla \uf(\ux_K)\|^2}{ (L_x-\mu_x)} +
\frac{1}{2\eta_{v}} \frac{\|\nabla \uh(y^*)
 - \nabla \uh(\uy_K)\|^2}{ (L_y-\mu_y)}
\nonumber \\
\;\leq\;
&\gamma^{-K}\big((\frac{1}{2\eta_x}+\frac{L_x-\mu_x}{2\eta_u})\|x^*-x_{0}\|^2 + (\frac{1}{2\eta_y}+\frac{L_y-\mu_y}{2\eta_v})\|y^*-y_{0}\|^2 \big)
\end{align}
Using $1 - x \leq \exp(-x)$ we get
\begin{align}
\gamma^{-K}  = (\frac1{1 + \kappa^{-1}})^{K}  \leq (1 - \frac1{\kappa+1})^{K} 
\leq \exp(- \frac{K}{\kappa+1})
\end{align}
Combining above two inequality gives us the desired result.
\end{proof}

\section{Guarantee for Bi-C-SC problem}
In this section we provide a guarantee for the output of Algorithm \ref{algo:lpd} in the Bi-C-SC setting. We do this by specializing Theorem \ref{thm:smooth_lpd_primal_dual} to this case.

\begin{corollary}[Formal version of Theorem \ref{thm:smooth_csc_primal_dual_informal}]
\label{cor:smooth_csc_primal_dual}
Let $\overline{x}_0 = \overline{x}_{-1} = x_0$ and $\overline{y}_0 = \overline{y}_{-1} = y_0$ and
\begin{align}
\frac1{\eta_{x,k}} = \frac1{(k+1)\eta_x}\,,\,
\frac1{\eta_x} = 2L_x + \frac{16\|A\|^2}{\mu_y}
\,,\,
\frac1{\eta_{y,k}} = \frac1{(k+1)\eta_y} + \frac{k\mu_y}2\,,\,
\frac1{\eta_y} &= 2(L_y-\mu_y)
\,,\,
\eta_{u,k} = \frac2{k}
\,,\,
\eta_{v,k} = \frac2{k}\,.
\end{align}
Let $D_\cX = \max_{x \in \cX \cap \domain(f)} \|x-x_{0}\|$ and $D_\cY = \max_{y \in \cY \cap \domain(h)} \|y-y_{0}\|$. Then for any $K > 0$, \\
\noindent
$(a)$ if $D_\cX < \infty$ and $D_\cY < \infty$,
\begin{align}
\max_{y \in \cY} \phi(\overline{x}_{K}, {y}) - \min_{x \in \cX} \phi({x}, \overline{y}_{K}) 
&\leq \frac{2 L_x}{K(K+1)} D_\cX^2 + \frac{16\|A\|^2}{\mu_y K(K+1)} D_\cX^2 +
\frac{2(L_y-\mu_y)}{K(K+1)} D_\cY^2
\end{align}
where $(\overline{x}_K, \overline{y}_K) := \frac2{K(K+1)} {\sum_{k=1}^{K} k ( x_k, y_k) } = (\ux_K, \uy_K)$. 
\\
\noindent
$(b)$ even if the domain is unbounded we can show that
\begin{align}
\frac{\mu_y}{4}\|y^*-y_{K}\|^2 &\leq \frac{ 4L_x}{K(K+1)}\|x^*-x_{0}\|^2+ \frac{16\|A\|^2}{\mu_y K(K+1)}\|x^*-x_{0}\|^2+
\frac{4(L_y-\mu_y)}{K(K+1)} \|y^*-y_{0}\|^2
\end{align}
where $(\ux_K, \uy_K) = \frac2{K(K+1)}\sum_{k=1}^{K} k \cdot (x_{k}, y_{k}) = (\overline{y}_K, \overline{y}_K)$.
\\
$(c)$ if $\phi_p(x)$$\,=\,$$\max_{y \in \cY} \phi(x, y)$, and we do a warm restart on variable $y$ using $K_0^p = \Omega_{\varepsilon}(1)$ initial additional iterations of the same algorithm, then
\begin{align}
&\phi_p(\overline{x}_{K}) - \phi_p(x^*)
\leq  (\frac{4L_x}{K(K+1)} + \frac{32\|A\|^2}{\mu_y K(K+1)} + \frac{(L_y - \mu_y)}{\mu_y}\frac{8\|A\|^2}{\mu_y K(K+1)} )\|x^*-x_{0}\|^2\,.
\end{align}
$(d)$ if $D_\cX < \infty$ and $\phi_d(x)$$\,=\,$$\min_{x \in \cX} \phi(x, y)$, and we do a warm restart on variable $y$ with $K_0^d = \Omega_{\varepsilon}(1)$ initial additional iterations of the same algorithm, then
\begin{align}
\phi_d(y^*) - \phi_d(\overline{y}_{K})
\leq \frac{4L_x}{K(K+1)} D_\cX^2 + \frac{32\|A\|^2}{\mu_y K(K+1)}D_\cX^2\,.
\end{align}
\end{corollary}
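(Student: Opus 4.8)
The plan is to obtain all of parts~(a)--(d) from the master inequality of Theorem~\ref{thm:smooth_lpd_primal_dual}, specialized to the C--SC regime $\mu_x=0$ and to the weights $\lambda_k=k+1$, which forces $\theta_k=\lambda_{k-1}/\lambda_k=k/(k+1)$ (matching the prescribed extrapolation) and gives $\sum_{k=0}^{K-1}\lambda_k=K(K+1)/2$ (matching the averaging weights $2(k+1)/(K(K+1))$; from the $\ux,\uy$ recursion with $\eta_{u,k}=\eta_{v,k}=2/k$ one also checks $\overline x_K=\ux_K$, $\overline y_K=\uy_K$). The delicate step is to exhibit auxiliary sequences verifying \eqref{eq:smooth_scsc_primal_dual_alpha1}--\eqref{eq:smooth_scsc_primal_dual_lambda}: I would take $\alpha_{u,k}=\tfrac{k}{2(L_x-\mu_x)}$, $\alpha_{v,k}=\tfrac{k}{2(L_y-\mu_y)}$ (so the dual constraints hold with equality), $\alpha_{x,k}=\Theta(\|A\|/(k\mu_y))$, and $\alpha_{y,k}=\Theta(k\mu_y/\|A\|)$. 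With these, $\theta_k/\alpha_{u,k}=\tfrac{2(L_x-\mu_x)}{k+1}$ covers the $2L_x$ part of $1/\eta_{x,k}=(2L_x+16\|A\|^2/\mu_y)/(k+1)$, while $\|A\|\alpha_{x,k+1}$ and $\theta_k\|A\|/\alpha_{y,k}$ are both $\Ord(\|A\|^2/(\mu_y(k+1)))$ and fit inside its remaining $16\|A\|^2/\mu_y$ part; symmetrically $\theta_k/\alpha_{v,k}$ covers the $2(L_y-\mu_y)/(k+1)$ part of $1/\eta_{y,k}=1/((k+1)\eta_y)+k\mu_y/2$, and $\theta_k\|A\|/\alpha_{x,k}$, $\|A\|\alpha_{y,k+1}$ are $\Ord(k\mu_y)$ and fit inside its $k\mu_y/2$ part --- precisely the roles of the constant $16$ in $1/\eta_x$ and of the $k\mu_y/2$ term in $1/\eta_y$; and \eqref{eq:smooth_scsc_primal_dual_lambda} holds, in fact with equality, for $\lambda_k=k+1$. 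Applying Theorem~\ref{thm:smooth_lpd_primal_dual} with $\lambda_0=1$, $\eta_{x,0}=\eta_x$, $\eta_{y,0}=\eta_y$, $\eta_{u,0}=\eta_{v,0}=\infty$ (so the initial Bregman terms vanish, consistently with $u_0=\nabla\uf(\ux_0)$, $v_0=\nabla\uh(\uy_0)$), and dropping the nonpositive step-$K$ terms, yields for all feasible $x,y$ and all $u,v$
\[
\sum_{k=0}^{K-1}(k+1)\big[\Phi(x_{k+1},y;u,v_{k+1})-\Phi(x,y_{k+1};u_{k+1},v)\big]\ \le\ \tfrac{1}{2\eta_x}\|x-x_0\|^2+\tfrac{1}{2\eta_y}\|y-y_0\|^2 .
\]

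For part~(a) I would use that $\Phi$ is jointly convex in $(x,v)$ and jointly concave in $(y,u)$ --- it is convex in $x$ (affine when $\mu_x=0$) and in $v$ (where it equals $\uh^*$), concave in $y$ and in $u$, with no cross term inside either pair. Jensen with the above weights converts the left-hand sum into $\Phi(\overline x_K,y;u,\overline v_K)-\Phi(x,\overline y_K;\overline u_K,v)$; choosing the adversarial $(y,u)$ to realize $\max_{y}\phi(\overline x_K,y)=\max_{y,u}\min_v\Phi(\overline x_K,y;u,v)$ and $(x,v)$ to realize $\min_x\phi(x,\overline y_K)=\min_{x,v}\max_u\Phi(x,\overline y_K;u,v)$ (so the inner $\min_v$, $\max_u$ may be dropped), the left side is at least $\phi_p(\overline x_K)-\phi_d(\overline y_K)$, while the right side is at most $\tfrac{1}{2\eta_x}D_\cX^2+\tfrac{1}{2\eta_y}D_\cY^2$ because these optimizers lie in $\cX$ and $\cY$. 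Dividing by $K(K+1)/2$ is exactly the stated bound, and $\overline x_K=\ux_K$, $\overline y_K=\uy_K$ identify the output with the averages.

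For part~(b) I would instead substitute the saddle point $x=x^*$, $y=y^*$, $u^*=\nabla\uf(x^*)$, $v^*=\nabla\uh(y^*)$ into Theorem~\ref{thm:smooth_lpd_primal_dual}: since $(x^*,v^*)$ minimizes and $(y^*,u^*)$ maximizes $\Phi$, every summand is nonnegative by Lemma~\ref{lem:minimax_pseudo_gap}, so the left side is $\ge 0$. Now, instead of discarding the negative step-$K$ term, I keep $-\tfrac{\lambda_K\|A\|\alpha_{y,K+1}}{2}\|y^*-y_K\|^2$, which gives $\tfrac{\lambda_K\|A\|\alpha_{y,K+1}}{2}\|y^*-y_K\|^2\le \tfrac{1}{2\eta_x}\|x^*-x_0\|^2+\tfrac{1}{2\eta_y}\|y^*-y_0\|^2$; since $\alpha_{y,k}=\Theta(k\mu_y/\|A\|)$ makes $\lambda_K\|A\|\alpha_{y,K+1}$ a constant multiple of $\mu_y K(K+1)$, rearranging yields the claimed $\tfrac{\mu_y}{4}\|y^*-y_K\|^2\le\tfrac{1}{K(K+1)}(\cdots)$ --- one notes the other negative step-$K$ terms could also have been retained, only strengthening the conclusion.

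Parts~(c) and~(d) are the most technical. A generic $y$-adversary reintroduces $D_\cY$ on the wrong side, so one instead uses the best response $\hat y=\arg\max_y\phi(\overline x_K,y)$: from $\phi_p(\overline x_K)-\phi_p(x^*)\le \phi(\overline x_K,\hat y)-\min_x\phi(x,\hat y)$ (the leftover $\phi_d(\hat y)-\phi_d(y^*)$ is $\le 0$) and Jensen in $x$ alone, the problem reduces to the master inequality at adversary $y=\hat y$, where the mismatch between the $y_{k+1}$ appearing there and $\hat y$ is absorbed through $\mu_y$-strong concavity of $\Phi$ in $y$; the surviving initialization error $\|y^*-y_0\|^2$ is then driven to $\Ord\!\big(\tfrac{L_y-\mu_y}{\mu_y}\cdot\tfrac{\|A\|^2}{\mu_y}\|x^*-x_0\|^2/K_0^2\big)$ by first running $K_0^p=\Omega_\varepsilon(1)$ extra iterations and invoking part~(b) (repeating the restart a bounded number of times, possible once $K_0^p\gtrsim\sqrt{(L_y-\mu_y)/\mu_y}$), which explains the factor $(L_y-\mu_y)/\mu_y$. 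Part~(d) is the mirror construction for $\phi_d$, using $\|\hat y-y^*\|\le(\|A\|/\mu_y)\|\overline x_K-x^*\|$ and $\|\overline x_K-x_0\|\le D_\cX$. The main obstacle is therefore the verification in the first paragraph --- matching the auxiliary $\alpha$-sequences to the exact constants in the prescribed stepsizes so that \eqref{eq:smooth_scsc_primal_dual_alpha1}--\eqref{eq:smooth_scsc_primal_dual_lambda} hold throughout --- together with the restart bookkeeping in (c)--(d) that keeps $K_0^p,K_0^d$ out of the leading-order term.
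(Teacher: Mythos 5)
Your proposal is correct and follows essentially the same route as the paper's proof: the same specialization of Theorem~\ref{thm:smooth_lpd_primal_dual} with $\lambda_k=k+1$, $\theta_k=k/(k+1)$ and (up to constants) the same $\alpha$-sequences $\alpha_{u,k}=k/(2L_x)$, $\alpha_{v,k}=k/(2(L_y-\mu_y))$, $\alpha_{x,k}=\Theta(\|A\|/(k\mu_y))$, $\alpha_{y,k}=\Theta(k\mu_y/\|A\|)$, followed by Jensen with the identification $(\overline{x}_K,\overline{y}_K)=(\ux_K,\uy_K)$ for (a), the saddle-point substitution retaining the negative terminal $y$-term for (b), and the $\|A\|/\mu_y$-Lipschitz best response plus a warm restart justified by (b) for (c)--(d). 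The only real divergence is in (c): the paper lower-bounds $\phi_p(x^*)\geq\phi(x^*,\overline{y}_K)$ so that the master inequality applies verbatim with $(x,y)=(x^*,\hy(\overline{x}_K))$, which makes your extra ``absorb the mismatch via $\mu_y$-strong concavity'' step (needed because your decomposition $\phi(\overline{x}_K,\hy)-\phi_d(\hy)$ is not directly of the controlled form) unnecessary.
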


\begin{proof}
We will first verify the parameter choices satisfies the required conditions of Theorem \ref{thm:smooth_lpd_primal_dual} for some choice of $\lambda_k$, $\alpha_{x,k}$, $\alpha_{y,k}$, $\alpha_{u,k}$, $\alpha_{v,k}$ for $k=-1,0,1,\ldots$. 

Let $\lambda_k = (k+1)$ and $\theta_k = k/(k+1)$. Clearly $\lambda_{k-1} = \theta \lambda_k$.
Next we will verify \eqref{eq:smooth_scsc_primal_dual_lambda} which simplifies to the 
\begin{align}
\frac{k+2}{\eta_{x,k+1}} \leq \frac{k+1}{\eta_{x,k}} \,,\;
\frac{k+2}{\eta_{y,k+1}} \leq \frac{(k+1)}{\eta_{y,k}} + (k+1) \mu_y \,,\; 
\frac{k+2}{\eta_{u,k+1}} \leq \frac{(k+1)}{\eta_{u,k}} + (k+1) \,, \text{ and } 
\frac{k+2}{\eta_{v,k+1}} \leq \frac{(k+1)}{\eta_{v,k}} + (k+1) 
\end{align}
under our choice of $\lambda_k$ and $\mu_x = 0$.
It is easy to verify that
\begin{align}
\frac{k+1}{\eta_{x,k}} = \frac1{\eta_x} &\geq \frac1{\eta_x} =  \frac{k+2}{\eta_{x,k+1}} \nonumber \\
\frac{(k+1)}{\eta_{y,k}} + (k+1) \mu_y = \frac1{\eta_y} + \frac{k(k+1) \mu_y}2 + (k+1) \mu_y &\geq  \frac1{\eta_y} + \frac{(k+1)(k+2) \mu_y}2 = \frac{k+2}{\eta_{y,k+1}} \nonumber \\
\frac{(k+1)}{\eta_{u,k}} + (k+1) = \frac{k(k+1)}2 + (k+1) &\geq  \frac{(k+1)(k+2}2 = \frac{k+2}{\eta_{u,k+1}} \nonumber \\
\frac{(k+1)}{\eta_{v,k}} + (k+1) = \frac{k(k+1)}2 + (k+1) &\geq  \frac{(k+1)(k+2}2 = \frac{k+2}{\eta_{v,k+1}} \nonumber
\end{align}
Let $\alpha_{x,k} = \frac{4\|A\|}{(k+1)\mu_y}$, $\alpha_{y,k} = \frac{k\mu_y}{4\|A\|}$, $\alpha_{u,k} = \frac{k}{2 L_x}$, $\alpha_{v,k} = \frac{k}{2 (L_y-\mu_y)}$ for all $k=0,1,\ldots$. 

Next we verify conditions \eqref{eq:smooth_scsc_primal_dual_alpha1} and \eqref{eq:smooth_scsc_primal_dual_alpha2}.
We can show that
\begin{align}
\alpha_{u,k} \leq  \frac{k}{2 L_x} \leq  \frac{k}{2 L_x}  = \frac1{\eta_{u,k} L_x}
\end{align}
and
\begin{align}
&\theta_k (\frac{\|A\| }{\alpha_{y,k}} + \frac{1}{\alpha_{u,k}}) + {\|A\| \alpha_{x,k+1}} = \frac{k}{k+1} (\frac{4\|A\|^2 }{k \mu_y} + \frac{2 L_x}{k}) + \frac{4\|A\|^2}{(k+2)\mu_y} \leq \frac{2L_x}{k+1} + \frac{16\|A\|^2}{\mu_y (k+1)} = \frac{1}{\eta_{x,k}} 
\end{align}
Similar we can also show that,
\begin{align}
\alpha_{v,k} \leq  \frac{k}{2 (L_y -\mu_y)} \leq  \frac{k}{2 (L_y -\mu_y)}  = \frac1{\eta_{v,k}(L_y -\mu_y)}
\end{align}
and
\begin{align}
&\theta_k (\frac{\|A\| }{\alpha_{x,k}} + \frac{1}{\alpha_{v,k}}) + {\|A\| \alpha_{y,k+1}} = \frac{k}{k+1} (\frac{(k+1)\mu_y }{4} + \frac{2 (L_x - \mu_y)}{k}) + \frac{k \mu_y}{4\|A\|} \leq \frac{2(L_y - \mu_y)}{(k+1)} + \frac{k\mu_y}2  =\frac{1}{\eta_{y,k}}
\end{align}

Then according to Theorem \ref{thm:smooth_lpd_primal_dual}, for any $K \geq 0$, $x \in \cX$, $y  \in \cY$, $u$, $v$
\begin{align}
\sum_{k=0}^{K-1} (k+1)  &[\Phi(x_{k+1}, y; u, v_{k+1}) - \Phi(x, y_{k+1}; u_{k+1}, v)] \nonumber \\
\;\leq\; &(L_x + \frac{8\|A\|^2}{\mu_y})\|x-x_{0}\|^2 - \frac{(K+1) 16 \|A\|^2 }{2 \mu_y (K+2)} \|x-x_{K}\|^2 +
(L_y-\mu_y)\|y-y_{0}\|^2 - (K+1)^2\frac{\mu_y}{8} \|y-y_{K}\|^2
\label{eq:smooth_csc_primal_dual_cor_eq1}
\end{align}

(a) We define that $(\overline{x}_K, \overline{y}_K; \overline{u}_K, \overline{v}_K) = (\sum_{k=1}^{K} (k+1))^{-1} {\sum_{k=1}^{K} (k+1) ( x_k, y_k; u_k, v_k)}$. Then $(\overline{x}_K, \overline{y}_K) = (\ux_K, \uy_K)$. Then $\overline{x}_K = \ux_K$ can be shown as follows
\begin{align}
\ux_K = \frac{\ux_{K-1} + \eta_x' x_{K}}{(1+\eta_x')} %
&= \frac{K-1}{K+1}\ux_{K-1} + \frac{2}{K+1}x_{K} \nonumber \\
&= \frac{(K-2)(K-1)}{K(K+1)}\ux_{K-2} + \frac{2(K-1)}{K(K+1)}x_{K-1} + \frac{2K}{K(K+1)}x_{K} \nonumber \\
&= \frac{(K-3)(K-2)}{K(K+1)}\ux_{K-3} + \frac{2(K-2)}{K(K+1)}\ux_{K-2} + \frac{2(K-1)}{K(K+1)}x_{K-1} + \frac{2(K)}{K(K+1)}x_{K} \nonumber \\
&\;\;\vdots \\
&= \frac2{K(K+1)}\sum_{k=1}^{K} k x_{k} = \overline{x}_K
\end{align}
Similarly, we can prove that $\overline{y}_K = \uy_K$. Then we can lower-bound the LHS of the \eqref{eq:smooth_csc_primal_dual_cor_eq1} using Jensen's inequality, convexity of $\Phi(\cdot, y; u, \cdot)$, and concavity of $\Phi(x, \cdot; \cdot, v)$ as follows.
\begin{align}
(\sum_{k=0}^{K-1} (k+1)) [\Phi(\overline{x}_{K}, {y}; {u}, \overline{v}_{K}) - \Phi({x}, \overline{y}_{K}; \overline{u}_{K}, v)]
\leq \sum_{k=0}^{K-1} (k+1)  &[\Phi(x_{k+1}, y; u, v_{k+1}) - \Phi(x, y_{k+1}; u_{k+1}, v)] 
\label{eq:smooth_csc_primal_dual_cor_eq2}
\end{align}
Notice that by Lemma \ref{lem:fenchel_props}(a), $\nabla f (x) = \arg\min_u \Ip{x}{u} - f^*(u)$ and $\nabla \uh (x) = \arg\min_v \Ip{y}{v} - \uh^*(v)$. Thus we have
\begin{align}
\phi(\overline{x}_{K}, {y}) - \phi({x}, \overline{y}_{K})
&= \min_{v} \max_{u} \Phi(\overline{x}_{K}, {y}; \nabla f(\overline{x}_{K}), v) - \Phi({x}, \overline{y}_{K};u, \nabla \uh(\overline{y}_{K})) \nonumber \\
&\leq \Phi(\overline{x}_{K}, {y}; \nabla f(\overline{x}_{K}), \overline{v}_{K}) - \Phi({x}, \overline{y}_{K}; \overline{u}_{K}, \nabla \uh(\overline{y}_{K}))
\label{eq:smooth_csc_primal_dual_cor_eq3}
\end{align}
Therefore summing equations \eqref{eq:smooth_csc_primal_dual_cor_eq1} and \eqref{eq:smooth_csc_primal_dual_cor_eq2}, then setting $u= \nabla f(\overline{x}_K)$, $v=\nabla \uh(\overline{y}_{K})=\nabla h(\overline{y}_K) - \mu_y \overline{y}_K$ and using \eqref{eq:smooth_csc_primal_dual_cor_eq3} we get
\begin{align}
\phi(\overline{x}_{K}, {y}) - \phi({x}, \overline{y}_{K})
&\leq \frac{2L_x}{K(K+1)}\|x-x_{0}\|^2 + \frac{16\|A\|^2}{\mu_y K(K+1)})\|x-x_{0}\|^2 + \frac{2(L_y-\mu_y)}{K(K+1)}\|y-y_{0}\|^2
\label{eq:smooth_csc_primal_dual_cor_eq4}
\end{align}
Finally maximizing both sides over $x \in \cX$ and $y \in \cY$ we get
\begin{align}
\max_{y \in \cY} \phi(\overline{x}_{K}, {y}) - \min_{x \in \cX}  \phi({x}, \overline{y}_{K})
&\leq \frac{2L_x}{K(K+1)}D_\cX^2 + \frac{16\|A\|^2}{\mu_y K(K+1)} D_\cX^2 + \frac{2(L_y-\mu_y)}{K(K+1)}D_\cY^2
\end{align}

(b) Setting $x = x^*$, $y = y^*$, $u=u^{*} = \nabla f(x^*) = \arg\min_u \Ip{x}{u} - f^*(u)$, $v=v^{*} = \nabla \uh(y^*) = \arg\min_v \Ip{y}{v} - \uh^*(v)$ in \eqref{eq:smooth_csc_primal_dual_cor_eq1} we get
\begin{align}
\sum_{k=0}^{K-1} \gamma^k  &[\Phi(x_{k+1}, y^*; u^*, v_{k+1}) - \Phi(x^*, y_{k+1}; u_{k+1}, v^*)] \nonumber \\
\;\leq\; &(L_x + \frac{8\|A\|^2}{\mu_y})\|x^*-x_{0}\|^2 +
(L_y-\mu_y)\|y^*-y_{0}\|^2 - (K+1)^2\frac{\mu_y}{2} \|y^*-y_{K}\|^2
\end{align}
Notice that the LHS above is positive, since by Lemma \ref{lem:minimax_pseudo_gap}, $\Phi(x_{k+1}, y^*; u, v_{k+1}) - \Phi(x^*, y_{k+1}; u_{k+1}, v)
\geq 0$ for all $k=0,1,\ldots$. Then using this fact we get that
\begin{align}
\frac{\mu_y}{4} \|y^*-y_{K}\|^2
\;\leq\;
&(\frac{2L_x}{(K+1)^2} + \frac{16\|A\|^2}{\mu_y(K+1)^2})\|x^*-x_{0}\|^2 +
\frac{2(L_y-\mu_y)}{(K+1)^2}\|y^*-y_{0}\|^2
\label{eq:smooth_csc_primal_dual_cor_eq5}
\end{align}
(c) Let $\hy(x) = \arg\max_{y} \phi(x,y)$, then we can show that $\hy(x)$ is $\|A\|/\mu_y$-Lipschitz continuous in $x$ \cite{nesterov2005smooth}. Then we can show that
\begin{align}
\|\hy(x)-y_{0}\|^2 &\leq 2\|\hy(x)-y^*\|^2 + 2\|y^*-y_{0}\|^2 \nonumber \\
&\leq 2\|\hy(x)-\hy(x^*)\|^2 + 2\|y^*-y_{0}\|^2 \nonumber \\
&\leq 2 \frac{\|A\|^2}{\mu_y^2} \|x-x^*\|^2 + 2\|y^*-y_{0}\|^2
\end{align}
Then using the above inequality and \eqref{eq:smooth_csc_primal_dual_cor_eq4} we get
\begin{align}
\phi_p(\overline{x}_{K}) - \phi_p(x^*) &= \max_{y \in \cY} \phi(\overline{x}_{K}, {y}) - \max_{y \in \cY} \phi(x^*, y) \nonumber \\
&\leq \phi(\overline{x}_{K}, \hy(x)) - \phi(x^*, \overline{y}_{K}) \nonumber \\
&\leq \frac{2L_x}{K(K+1)}\|x^*-x_{0}\|^2 + \frac{16\|A\|^2}{\mu_y K(K+1)}\|x^*-x_{0}\|^2 + \frac{2(L_y-\mu_y)}{K(K+1)}\|\hy(x)-y_{0}\|^2 \nonumber \\
&\leq (\frac{2L_x}{K(K+1)} + \frac{16\|A\|^2}{\mu_y K(K+1)} + \frac{(L_y - \mu_y)}{\mu_y}\frac{4\|A\|^2}{\mu_y K(K+1)} )\|x^*-x_{0}\|^2 + \frac{4(L_y-\mu_y)}{K(K+1)}\|y^*-y_{0}\|^2
\end{align}
From the above inequality it is clear that 
\begin{align}
\phi_p(\overline{x}_{K}) - \phi_p(x^*) &\leq (\frac{4L_x}{K(K+1)} + \frac{32\|A\|^2}{\mu_y K(K+1)} + \frac{(L_y - \mu_y)}{\mu_y}\frac{8\|A\|^2}{\mu_y K(K+1)} )\|x^*-x_{0}\|^2
\end{align}
if
\begin{align}
\|y^*-y_{0}\|^2 \leq (\frac{L_x}{2(L_y-\mu_y)} + \frac{4\|A\|^2}{\mu_y (L_y-\mu_y)} + \frac{(L_y - \mu_y)}{\mu_y}\frac{\|A\|^2}{\mu_y (L_y-\mu_y)} )\|x^*-x_{0}\|^2 \,.
\end{align}
Because of \eqref{eq:smooth_csc_primal_dual_cor_eq5}, we can find a $y_{0}$ satisfying the above inquality by running our algorithm from from $(x_0,y_0)$ for 
\begin{align}
K_0^p \geq \Omega(&\sqrt{{4(L_y-\mu_y)}/{\mu_y}} \times \sqrt{({2L_x} + \frac{16\|A\|^2}{\mu_y})\|x^*-x_{0}\|^2 + {2(L_y-\mu_y)}\|y^*-y_{0}\|^2} \times \nonumber \\ 
&\sqrt{1\bigg/((\frac{L_x}{2} + \frac{4\|A\|^2}{\mu_y } + \frac{(L_y - \mu_y)}{\mu_y}\frac{\|A\|^2}{\mu_y} )\|x^*-x_{0}\|^2 }
)
\end{align}
iterations.
\begin{align}
\|y^*-y_{K_0^p}\|^2 &\leq \frac{4}{\mu_y} (\frac{2L_x}{(K+1)^2} + \frac{16\|A\|^2}{\mu_y(K+1)^2})\|x^*-x_{0}\|^2 +
\frac{4}{\mu_y}  \frac{2(L_y-\mu_y)}{(K+1)^2}\|y^*-y_{0}\|^2 \nonumber \\
&\leq (\frac{L_x}{2(L_y-\mu_y)} + \frac{4\|A\|^2}{\mu_y (L_y-\mu_y)} + \frac{(L_y - \mu_y)}{\mu_y}\frac{\|A\|^2}{\mu_y (L_y-\mu_y)} )\|x^*-x_{0}\|^2
\end{align}

Similarly using the above inequality and \eqref{eq:smooth_csc_primal_dual_cor_eq4} we get
\begin{align}
\phi_d(y^*) - \phi_d(\overline{y}_{K}) &= \min_{x \in \cX} \phi(x, y^*) - \min_{x \in \cX} \phi(x, \overline{y}_{K}) \nonumber \\
&\leq \phi(\overline{x}_{K}, y^*) - \min_{x \in \cX} \phi(x, \overline{y}_{K}) \nonumber \\
&\leq \frac{2L_x}{K(K+1)} D_\cX^2 + \frac{16\|A\|^2}{\mu_y K(K+1)}D_\cX^2 + \frac{2(L_y-\mu_y)}{K(K+1)}\|y^*-y_{0}\|^2
\end{align}

From the above inequality it is clear that 
\begin{align}
\phi_p(\overline{x}_{K}) - \phi_p(x^*) &\leq \frac{4L_x}{K(K+1)} D_\cX^2 + \frac{32\|A\|^2}{\mu_y K(K+1)}D_\cX^2
\end{align}
if
\begin{align}
\|y^*-y_{0}\|^2 \leq (\frac{L_x}{(L_y-\mu_y)} + \frac{8\|A\|^2}{\mu_y (L_y-\mu_y)} )D_\cX^2 \,.
\end{align}
Because of \eqref{eq:smooth_csc_primal_dual_cor_eq5}, we can find a $y_{0}$ satisfying the above inquality by running our algorithm from from $(x_0,y_0)$ for 
\begin{align}
K_0^d \geq \Omega(&\sqrt{{4(L_y-\mu_y)}/{\mu_y}} \times \sqrt{({2L_x} + \frac{16\|A\|^2}{\mu_y})\|x^*-x_{0}\|^2 + {2(L_y-\mu_y)}\|y^*-y_{0}\|^2} \times \nonumber \\ 
&\sqrt{1\bigg/(({L_x} + \frac{8\|A\|^2}{\mu_y } )D_\cX^2 }
)
\end{align}
iterations.
\begin{align}
\|y^*-y_{K_0^d}\|^2 &\leq \frac{4}{\mu_y} (\frac{2L_x}{(K+1)^2} + \frac{16\|A\|^2}{\mu_y(K+1)^2})\|x^*-x_{0}\|^2 +
\frac{4}{\mu_y}  \frac{2(L_y-\mu_y)}{(K+1)^2}\|y^*-y_{0}\|^2 \nonumber \\
&\leq (\frac{L_x}{(L_y-\mu_y)} + \frac{8\|A\|^2}{\mu_y (L_y-\mu_y)} )D_\cX^2
\end{align}
\end{proof}

\section{Balanced Mirror-Prox and Additional experimental details for Section \ref{sec:expts}}
\label{sec:expt_details}
 
For all the experiments we used the theory specified stepsize choices. Balanced Mirror Prox (which we shorten as MP Bal.) is variant of the standard Mirror-Prox algorithm (folklore). For implementing MP Bal.~first we normalize the distance functions so that objective becomes $1$-strongly convex in both the min variable $x$ and the max variable $y$. This modifies Lipschitz constants of the gradients as $L_x \gets L_x/\mu_x $, $L_{xy} \gets L_{xy}/\sqrt{\mu_x \mu_y} = \|A\|/\sqrt{\mu_x \mu_y}$, $L_y \gets L_y/\mu_y$. Finally, in this modified geometry (distance metrics), we run the standard MP with the stepsize $1/\max(L_x, L_{xy}, L_y)$. Since we modified the Lipschitz constants of the gradients this leads to a iteration complexity of $\Ord(\sqrt{\frac{L_x}{\mu_x}} + \frac{\|A\|}{\sqrt{\mu_x\mu_y}} + \sqrt{\frac{L_y}{\mu_y}}) \log(\frac1{\varepsilon})$. This result was also mentioned as a known folklore in Appendix C of \cite{cohen2021relative}.

For experiments using quadratic minimax problems we use $d=5$ and we generate $B$, $A$, $C$ as follows. Let $\Lambda = \mathrm{diag}(r^{0}, r^{1}, \ldots, r^{d-1}) $. Then $A = Q^{(A,2)} \Lambda (Q^{(A,1)})^\top$, $B = \widetilde{B}^\top \widetilde{B}$, $\widetilde{B} = Q^{(B,1)} \Lambda (Q^{(B,2)})^\top$, $C = \widetilde{C}^\top \widetilde{C}$, $\widetilde{C} = Q^{(C,1)} \Lambda (Q^{(C,2)})^\top$, where $Q^{(A,1)}$, $Q^{(A,2)}$, $Q^{(B,1)}$, $Q^{(B,2)}$, $Q^{(C,1)}$, $Q^{(C,2)}$ are i.i.d.~$d \times d$ orthonormal matrices which are generated uniformly at random.
For Figure \ref{fig:synthetic_scsc} we set $r=2.0$, and for Figure \ref{fig:synthetic_kappa} we vary $r$ using the values $\{1.25, 1.5, 1.75, 2.0, 2.25\}$.

\end{document}